\theoremstyle{plain}
\newtheorem{thm}{Theorem}[section]
\newtheorem{lem}[thm]{Lemma}
\newtheorem{prop}[thm]{Proposition}
\newtheorem{cor}[thm]{Corollary}
\theoremstyle{definition}
\newtheorem{defi}[thm]{Definition}
\newtheorem{defs}[thm]{Definitions}
\newtheorem{ntn}[thm]{Notation}
\newtheorem{rmds}[thm]{Reminders}
\theoremstyle{remark}
\newtheorem*{note}{Note}
\newtheorem{rmk}[thm]{Remark}
\newtheorem{rmks}[thm]{Remarks}
\DeclareMathOperator{\height}{ht} 
\DeclareMathOperator{\ext}{exten} \DeclareMathOperator{\Hom}{Hom}
  \DeclareMathOperator{\Reg}{Reg}
\DeclareMathOperator{\Spec}{Spec} 
 \DeclareMathOperator{\ann}{ann}
\DeclareMathOperator{\grann}{gr-ann} 
\def\Z{\mathbb Z}
\def\N{\mathbb N}
\def\fa{{\mathfrak{a}}}
\def\fb{{\mathfrak{b}}}
\def\fB{{\mathfrak{B}}}
\def\fc{{\mathfrak{c}}}
\def\fd{{\mathfrak{d}}}
\def\fk{{\mathfrak{k}}}
\def\fm{{\mathfrak{m}}}
\def\fn{{\mathfrak{n}}}
\def\fp{{\mathfrak{p}}}
\def\fq{{\mathfrak{q}}}
\def\nn{\relax\ifmmode{\mathbb N_{0}}\else$\mathbb N_{0}$\fi}
\def\lra{\longrightarrow}
\begin{document}

\title[Big test elements for excellent $F$-pure rings]{An excellent $F$-pure
ring of prime characteristic has a big tight closure test element}
\author{RODNEY Y. SHARP}
\address{Department of Pure Mathematics,
University of Sheffield, Hicks Building, Sheffield S3 7RH, United
Kingdom} \email{R.Y.Sharp@sheffield.ac.uk}

\subjclass[2000]{Primary 13A35, 16S36, 13D45, 13E05, 13E10, 13H10;
Secondary 13J10}

\date{\today}

\keywords{Commutative Noetherian ring, prime characteristic,
Frobenius homomorphism, tight closure, test element, $F$-pure ring,
Frobenius skew polynomial ring.}

\begin{abstract}
In two recent papers, the author has developed a theory of graded
annihilators of left modules over the Frobenius skew polynomial ring
over a commutative Noetherian ring $R$ of prime characteristic $p$,
and has shown that this theory is relevant to the theory of test
elements in tight closure theory. One result of that work was that,
if $R$ is local and the $R$-module structure on the injective
envelope $E$ of the simple $R$-module can be extended to a structure
as a torsion-free left module over the Frobenius skew polynomial
ring, then $R$ is $F$-pure and has a tight closure test element. One
of the central results of this paper is the converse, namely that,
if $R$ is $F$-pure, then $E$ has a structure as a torsion-free left
module over the Frobenius skew polynomial ring; a corollary is that
every $F$-pure local ring of prime characteristic, even if it is not
excellent, has a tight closure test element. These results are then
used, along with embedding theorems for modules over the Frobenius
skew polynomial ring, to show that every excellent (not necessarily
local) $F$-pure ring of characteristic $p$ must have a so-called
`big' test element.
\end{abstract}

\maketitle

\setcounter{section}{-1}
\section{\sc Introduction}
\label{intro}

This paper is concerned with the existence of tight closure test
elements for an $F$-pure (commutative Noetherian) ring of prime
characteristic.

In two recent papers \cite{ga} and \cite{gatcti}, the author has
developed a theory of graded annihilators of left modules over the
Frobenius skew polynomial ring (over a commutative Noetherian ring
of prime characteristic), and shown that this theory is relevant to
the theory of tight closure test elements. We shall therefore
recall, in this introduction, certain definitions from M. Hochster's
and C. Huneke's theory of tight closure, but they will be presented
below in terms of the Frobenius skew polynomial ring.

Throughout the paper, $R$ will denote a commutative Noetherian ring
of prime characteristic $p$. We shall only assume that $R$ is local
when this is explicitly stated; then, the notation `$(R,\fm)$' will
denote that $\fm$ is the maximal ideal of $R$. We shall always
denote by $f:R\lra R$ the Frobenius homomorphism, for which $f(r) =
r^p$ for all $r \in R$. The {\em Frobenius skew polynomial ring over
$R$\/} is the skew polynomial ring $R[x,f]$ associated to $R$ and
$f$ in the indeterminate $x$ over $R$. Recall that $R[x,f]$ is, as a
left $R$-module, freely generated by $(x^i)_{i \in \nn}$ (throughout
the paper, we use $\N$ and $\nn$ to denote the set of positive
integers and the set of non-negative integers, respectively),
 and so consists
 of all polynomials $\sum_{i = 0}^n r_i x^i$, where  $n \in \nn$
 and  $r_0,\ldots,r_n \in R$; however, its multiplication is subject to the
 rule
 $
  xr = f(r)x = r^px$ for all $r \in R.$
Note that $R[x,f]$ can be considered as a positively-graded ring
$R[x,f] = \bigoplus_{n=0}^{\infty} R[x,f]_n$, with $R[x,f]_n = Rx^n$
for all $n \in \nn$.

If, for $n \in \N$, we endow $Rx^n$ with its natural structure as an
$(R,R)$-bimodule (inherited from its being a graded component of
$R[x,f]$), then $Rx^n$ is isomorphic (as $(R,R)$-bimodule) to $R$
viewed as a left $R$-module in the natural way and as a right
$R$-module via $f^n$, the $n$th iterate of the Frobenius ring
homomorphism. We can now write that $R$ is {\em $F$-pure\/}
precisely when, for each $R$-module $N$, the map $\psi_N : N \lra
Rx\otimes_RN$ for which $\psi_N(g) = x\otimes g$ for all $g \in N$
is injective.

A left $R[x,f]$-module $H$ is said to be {\em $x$-torsion-free\/}
precisely when $$\Gamma_x(H) := \left\{h \in H : x^nh = 0 \mbox{~for
some~} n \in \N\right\} = 0.$$ It is worth pointing out right away
that, when $R$ is $F$-pure, the left $R[x,f]$-module
$R[x,f]\otimes_RN$ is $x$-torsion-free, for each $R$-module $N$.

We now formulate some of the basic definitions of tight closure
theory in terms of the Frobenius skew polynomial ring. The theory of
tight closure was invented by M. Hochster and C. Huneke
\cite{HocHun90}, and many applications have been found for the
theory: see \cite{Hunek96}.

We use $R^{\circ}$ to denote the complement in $R$ of the union of
the minimal prime ideals of $R$. Let $L$ and $M$ be $R$-modules and
let $K$ be a submodule of $L$. Observe that there is a natural
structure as $\nn$-graded left $R[x,f]$-module on $R[x,f]\otimes_RM
= \bigoplus_{n\in\nn}(Rx^n\otimes_RM)$. An element $m \in M$ belongs
to $0^*_M$, the {\em tight closure of the zero submodule in $M$\/},
if and only if there exists $c \in R^{\circ}$ such that the element
$1 \otimes m \in (R[x,f]\otimes_RM)_0$ is annihilated by $cx^j$ for
all $j \gg 0$: see Hochster--Huneke \cite[\S 8]{HocHun90}.
(Incidentally, in discussions of this type, we shall often tacitly
identify $$(R[x,f]\otimes_RM)_0 = Rx^0\otimes_RM = R\otimes_RM$$
with $M$ in the obvious way.) Furthermore, the tight closure $K^*_L$
of $K$ in $L$ is the inverse image, under the natural epimorphism $L
\lra L/K$, of $0^*_{L/K}$, the tight closure of $0$ in $L/K$. In
general, we have $K \subseteq K^*_L$; we say that $K$ is {\em
tightly closed in $L$\/} precisely when $K = K^*_L$, that is, if and
only if $0^*_{L/K} = 0$.

The ring $R$ is said to be {\em weakly $F$-regular\/} precisely when
every ideal of $R$ is tightly closed in $R$, and to be {\em
$F$-regular\/} if and only if each ring of fractions $S^{-1}R$ of
$R$ with respect to an arbitrary multiplicatively closed subset $S$
of $R$ is weakly $F$-regular.

A {\em test element for modules\/} for $R$ is an element $c \in
R^{\circ}$ such that, for every finitely generated $R$-module $M$
and every $j \in \nn$, the element $cx^j$ annihilates $1 \otimes m
\in (R[x,f]\otimes_RM)_0$ for every $m \in 0^*_M$. The phrase `for
modules' is inserted because Hochster and Huneke have also
considered a concept of a {\em test element for ideals} for $R$,
which is defined to be an element $c \in R^{\circ}$ such that, for
every {\em cyclic\/} $R$-module $M$ and every $j \in \nn$, the
element $cx^j$ annihilates $1 \otimes m \in (R[x,f]\otimes_RM)_0$
for every $m \in 0^*_M$. When $R$ is reduced and excellent, the
concepts of test element for modules and test element for ideals for
$R$ coincide: see \cite[Discussion (8.6) and Proposition
(8.15)]{HocHun90}. With this, and the fact that the main existence
results for test elements in this paper only apply to excellent
rings that are $F$-pure (and therefore reduced), in mind, I shall
use the phrase `{\em test element for $R$}' as an abbreviation for
`test element for modules for $R$'.

A {\em big test element\/} for $R$ is defined to be an element $c
\in R^{\circ}$ such that, for {\em every\/} $R$-module $M$ and every
$j \in \nn$, the element $cx^j$ annihilates $1 \otimes m \in
(R[x,f]\otimes_RM)_0$ for every $m \in 0^*_M$.

It is generally accepted that, currently, the best known results
about the existence of test elements are those of Hochster--Huneke
in \cite[\S 6]{HocHun94}.  Here is one of their results. (For $c\in
R$, we denote by $R_c$ the ring of fractions of $R$ with respect to
the multiplicatively closed set consisting of the powers of $c$.)

\begin{thm}[M. Hochster and C. Huneke {\cite[Theorem
(6.1)(b)]{HocHun94}}] \label{in.1} A reduced algebra $R$ of finite
type over an excellent local ring of characteristic $p$ has a test
element.

In fact, if $c \in R^{\circ}$ is such that $R_c$ is regular, then
some power of $c$ is a test element for $R$.
\end{thm}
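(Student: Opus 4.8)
The plan is to produce $c$ from excellence, reformulate the conclusion in terms of the test ideal, reduce by faithful descent to the case of an $F$-finite complete local domain, and there invoke the Hochster--Huneke machinery. It suffices to prove the second assertion: being of finite type over an excellent local ring, $R$ is excellent, so $\Reg R$ is open, say $\Spec R\smallsetminus\Reg R=\Var(\fc)$; as $R$ is reduced, $R_\fp$ is a field for every minimal prime $\fp$, so $\fc$ lies in no minimal prime, and prime avoidance gives $c\in\fc\cap R^{\circ}$, whence $R_c$ is regular. I would then reformulate what is to be proved: writing $(R[x,f]\otimes_RM)_e=Rx^e\otimes_RM$, so that $x^e\otimes m$ corresponds to $1\otimes m$ under the identification of the degree-zero component with $M$, one checks that $m\in 0^*_M$ forces $1\otimes m\in 0^*_{(R[x,f]\otimes_RM)_e}$ for all $e\in\nn$; hence ``$c^N$ is a test element for $R$'' is equivalent to ``$c^N$ annihilates $0^*_M$ for every finitely generated $R$-module $M$'', i.e.\ to $c^N\in\tau(R):=\bigcap_M\ann_R(0^*_M)$, the intersection over finitely generated $M$. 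So the target is: some power of $c$ lies in $\tau(R)$.

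Next I would reduce, each step costing only a bounded power of $c$, to the case of an $F$-finite complete local domain. First, $c^N\in\tau(R)$ holds as soon as $(c/1)^N\in\tau(\widehat{R_\fm})$ for every maximal ideal $\fm$: if some $m\in 0^*_M$ (over $R$, $M$ finitely generated) had $c^Nm\neq0$, then after choosing $\fm\supseteq\ann_R(c^Nm)$ and passing to $\widehat{R_\fm}$ faithful flatness keeps $c^Nm\neq0$, while excellence ensures $\widehat{R_\fm}$ is reduced with minimal primes lying over minimal primes of $R_\fm$, so the element of $R^{\circ}$ witnessing $m\in 0^*_M$ stays in $(\widehat{R_\fm})^{\circ}$ and the image of $m$ stays in $0^*$; also $(\widehat{R_\fm})_c$ is regular, $\widehat{R_\fm}$ being $R_\fm$-flat with geometrically regular fibres. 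So assume $R$ complete local. Modding out by the minimal primes $\fq_1,\dots,\fq_t$: from $0\to R\to\prod_iR/\fq_i\to C\to0$ one gets $c^kC=0$ and, tensoring with $M$, that $\bigcap_i\fq_iM$ is annihilated by $c^k$, so knowing $c_i^{N_i}\in\tau(R/\fq_i)$ for each $i$ yields $c^{(\max_iN_i)+k}\in\tau(R)$. So assume $R$ a complete local domain. Finally the Hochster--Huneke $\Gamma$-construction replaces $R$ by a faithfully flat extension $R^{\Gamma}$ that is $F$-finite, complete local, with geometrically regular fibres and (for $\Gamma$ chosen small enough) with the property that, after one more passage to the domain case, $R^{\Gamma}$ is an $F$-finite complete local domain whose fraction field is separable over that of a suitable regular subring; faithful flatness, as above, reduces $c^N\in\tau(R)$ to $c^N\in\tau(R^{\Gamma})$.

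For the remaining assertion I would appeal to \cite[\S 6]{HocHun94}: it shows that an $F$-finite complete local domain $R$ with $c\in R^{\circ}$ and $R_c$ regular has a power of $c$ in $\tau(R)$. The engine is Noether normalisation --- $R$ is module-finite, and (the residue field having been arranged suitably) generically \'etale, over a complete regular local ring $A$ --- together with the field trace $\operatorname{tr}\colon\operatorname{Frac}R\to\operatorname{Frac}A$ and the Lipman--Sathaye theorem on Jacobian ideals: these transfer the triviality of tight closure over the regular ring $A$ (where $0^*_N=0$ for every finitely generated $N$) up to $R$, showing that a power of a suitable Jacobian-related element of $R^{\circ}$ lies in $\tau(R)$; and since $R_c$ is regular the non-regular locus of $R$ is contained in $\Var(c)$, so $c$ lies in the radical of that ideal and hence some power of $c$ lies in $\tau(R)$. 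Undoing the reductions, a power of the original $c$ is a test element for $R$.

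The hard part is not a single calculation but the machinery just invoked. Tight closure is not known to commute with localisation in general; the way round this --- first pass to an $F$-finite situation, where the trace argument and the favourable behaviour of the test ideal are available, and throughout localise or complete only in directions that keep the witness element in the relevant $\circ$-set, which is exactly where excellence enters --- has to be combined with the Lipman--Sathaye Jacobian theorem underlying the existence of test elements, and every reduction must be arranged to spend only a bounded power of $c$.
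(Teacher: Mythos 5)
The paper does not prove this theorem; it is Hochster and Huneke's result, quoted verbatim from \cite[Theorem (6.1)(b)]{HocHun94} and used as a black box (for instance in the proof of Theorem \ref{in.2}). The author explicitly remarks, immediately after the statement, that ``the proof of the above theorem in \cite{HocHun94} is quite difficult, because it depends on the so-called `$\Gamma$-construction' '', and the declared purpose of the paper is precisely to give, for $F$-pure excellent rings, an \emph{alternative} route to (big) test elements that avoids this machinery.

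Your sketch is therefore not comparable with a proof in this paper, because there is none; what you have written is an outline of the original Hochster--Huneke argument. As such it is a reasonable summary of their strategy: reduce via faithfully flat base change (using excellence to keep the witness element in $R^{\circ}$ and to keep $R_{c}$ regular) to a complete local ring; reduce to a domain by quotienting out minimal primes, at the cost of a bounded power of $c$ coming from the exact sequence $0\to R\to\prod_i R/\fq_i\to C\to 0$; pass to an $F$-finite ring by the $\Gamma$-construction; and finish with Noether normalisation, the field trace, and the Lipman--Sathaye Jacobian theorem. Two small cautions: the equivalence ``$c$ is a test element $\Longleftrightarrow$ $c\in\tau(R)$'' needs the (true, but worth stating) observation that $m\in 0^{*}_{M}$ forces $x^{j}\otimes m\in 0^{*}_{Rx^{j}\otimes_{R}M}$ for every $j$, so that membership in $\tau(R)$ really does control all the conditions $cx^{j}(1\otimes m)=0$ in the paper's definition; and the statement that $C$ is killed by a power of $c$ uses that $R_{c}$, being regular and reduced, is a finite product of domains, so that $R_{c}\to\prod_i(R/\fq_i)_{c}$ is an isomorphism. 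If your aim was to engage with this paper's methods, the place to look is \S\ref{fp} and \S\ref{aet} (Theorems \ref{fp.1}, \ref{fp.24}, \ref{fp.32}), which replace the $\Gamma$-construction and Lipman--Sathaye by the graded-annihilator and embedding arguments, under the stronger hypothesis that $R$ is $F$-pure.
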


However, it is also generally accepted that the proof of the above
theorem in \cite{HocHun94} is quite difficult, because it depends on
the so-called `$\Gamma$-construction'.

One of the main results of \cite{gatcti} is that, if $(R,\fm)$ is
local and the $R$-module structure on the injective envelope
$E_R(R/\fm)$ of $R/\fm$ can be extended to an $x$-torsion-free left
$R[x,f]$-module structure, then $R$ is $F$-pure and has a test
element for modules. In this paper we shall prove the converse, and
so obtain (in Theorem \ref{fp.1} below) the following
characterization of $F$-pure local rings.

\vspace{0.1in}

\noindent{\bf Theorem.} {\it Suppose that $(R,\fm)$ is local. Then
$R$ is $F$-pure if and only if the $R$-module structure on
$E_R(R/\fm)$ can be extended to an $x$-torsion-free left
$R[x,f]$-module structure.}

\vspace{0.1in}

A corollary is that every $F$-pure local ring (such a ring must be
reduced) has a test element for modules, even if it is not
excellent.

The methods from \cite{ga} and \cite{gatcti} based on graded
annihilators are particularly well-suited to $x$-torsion-free left
$R[x,f]$-modules. A key point is that those methods associate, with
an $x$-torsion-free left $R[x,f]$-module $E$ that is Artinian as an
$R$-module, a certain finite set ${\mathcal I}(E)$ of radical ideals
of $R$, and, in some circumstances (such as the case where $(R,\fm)$
is local and $E$ is $R$-isomorphic to the injective envelope of the
simple $R$-module), this set ${\mathcal I}(E)$ has connections with
tight closure test elements.

This paper uses those ideas, in conjunction with embedding theorems
for modules over the Frobenius skew polynomial ring, to prove (in
Theorem \ref{fp.32} below) the following existence theorem.

\vspace{0.1in}

\noindent{\bf Theorem.} {\it Suppose that $R$ is $F$-pure and
excellent (but not necessarily local). Then $R$ has a big test
element.

In fact, any $c \in R^{\circ}$ for which $R_c$ is regular must be a
big test element for $R$.}

\vspace{0.1in}

This theorem only applies to $F$-pure excellent rings, and so has
rather limited applicability compared with the Hochster--Huneke
Theorem \ref{in.1}; on the other hand, it does apply to all $F$-pure
excellent rings and not just to those $F$-pure rings that are
algebras of finite type over an excellent local ring. Also, the
theorem asserts the existence of big test elements, as opposed to
test elements. In fact, one can deduce fairly quickly from known
results of Hochster and Huneke that, if $R$ is $F$-pure and
excellent and $c \in R^{\circ}$ is such that $R_c$ is regular, then
$c$ is a test element for $R$. We prove this claim now. However, the
reader should note that the main aim of this paper is to show that
methods based on graded annihilators from \cite{ga} and
\cite{gatcti} provide a completely new approach to questions about
the existence of tight closure test elements.

\begin{thm}
\label{in.2} Suppose that $R$ is $F$-pure and excellent (but not
necessarily local). Then $R$ has a test element.

In fact, any $c \in R^{\circ}$ for which $R_c$ is regular must be a
test element for $R$.
\end{thm}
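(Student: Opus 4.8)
The plan is to reduce, by localizing, to the known existence theorem of Hochster and Huneke (Theorem~\ref{in.1}), and then to use $F$-purity to replace the power of $c$ produced there by $c$ itself. Since $R$ is $F$-pure it is reduced, so $R^{\circ}$ is exactly the set of non-zerodivisors of $R$. I would first establish a globalization step: \emph{if the image of $c$ in $R_{\fm}$ is a test element for $R_{\fm}$ for every maximal ideal $\fm$ of $R$, then $c$ is a test element for $R$.} This is routine: given a finitely generated $R$-module $M$, an element $m \in 0^*_M$ and $j \in \nn$, localize at an arbitrary maximal $\fm$, use that $m/1 \in 0^*_{M_{\fm}}$ and that there is an isomorphism $(R[x,f]\otimes_R M)_{\fm} \cong R_{\fm}[x,f]\otimes_{R_{\fm}} M_{\fm}$ compatible with the graded left $R[x,f]$-module structures, and deduce that $cx^j(1\otimes m)$ vanishes after localization at every $\fm$, hence vanishes. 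So it suffices to treat the local case, and I shall assume henceforth that $(R,\fm)$ is local (it is still excellent and $F$-pure, $c \in R^{\circ}$, and $R_c$ is still regular).

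Next I would invoke Theorem~\ref{in.1}. The ring $R$ is (trivially) an algebra of finite type over the excellent local ring $R$, it is reduced, and $R_c$ is regular, so Theorem~\ref{in.1} yields an $N \in \N$ with $c^N$ a test element for $R$. (A reader uneasy about using $R$ as its own base ring in Theorem~\ref{in.1} may instead pass to $\widehat{R}$: by Cohen's structure theorem it is a module-finite algebra over a complete regular local ring, it is reduced, and, because $R$ is excellent, $(\widehat{R})_c$ is regular; Theorem~\ref{in.1} then gives $c^N$ a test element for $\widehat{R}$, and since $0^*_M\widehat R \subseteq 0^*_{\widehat M}$ for finitely generated $M$ and $\widehat R$ is faithfully flat over $R$, it follows that $c^N$ is a test element for $R$.)

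Finally I would upgrade ``$c^N$ is a test element'' to ``$c$ is a test element'', which is the one step that genuinely uses $F$-purity. Let $M$ be a finitely generated $R$-module, let $m \in 0^*_M$ and $j \in \nn$, and put $y := 1\otimes m \in (R[x,f]\otimes_R M)_0$. Since $R$ is $F$-pure, the left $R[x,f]$-module $R[x,f]\otimes_R M$ is $x$-torsion-free. Choose $i \in \N$ with $p^i \geq N$. From $x^i c = c^{p^i}x^i$ in $R[x,f]$ one gets
$$ x^i\bigl(c\,x^j y\bigr) \;=\; c^{p^i}\,x^{i+j} y \;=\; c^{\,p^i-N}\bigl(c^{N} x^{i+j}(1\otimes m)\bigr) \;=\; 0, $$
the last equality because $c^N$ is a test element. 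As $R[x,f]\otimes_R M$ is $x$-torsion-free, this forces $c\,x^j(1\otimes m) = c\,x^j y = 0$; hence $c$ is a test element for $R$. Combined with the globalization step, this proves the theorem.

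I do not anticipate a deep obstacle: the argument is a short assembly of standard properties of localization, completion and excellence together with Theorem~\ref{in.1}, plus the elementary $x$-torsion-freeness computation displayed above. The only places needing a little care are checking that regularity of $R_c$, reducedness, and $F$-purity are correctly transported under localization (and, if one takes that route, under completion, which is precisely where excellence enters), and verifying that the step from ``a power of $c$'' to ``$c$'' costs nothing beyond the $x$-torsion-freeness of $R[x,f]\otimes_R M$ guaranteed by $F$-purity.
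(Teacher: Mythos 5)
Your proof is correct and follows essentially the same route as the paper: reduce to the local case by localizing, invoke Theorem~\ref{in.1} to obtain a power of $c$ as a test element, and then use the $x$-torsion-freeness of $R[x,f]\otimes_R M$ (which holds because $R$ is $F$-pure) to pass from $c^N$ to $c$ itself. The only cosmetic differences are that you sketch the globalization step directly (where the paper cites Hochster--Huneke's Proposition 6.1(a), after first noting that for reduced excellent $R$ the module and ideal notions of test element coincide) and that you offer an optional detour through $\widehat{R}$ rather than applying Theorem~\ref{in.1} to $R$ directly.
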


\begin{proof} Since $F$-pure rings are reduced, the concepts of `test
element for modules' and `test element for ideals' coincide for $R$:
see Hochster--Huneke \cite[Discussion (8.6) and Proposition
(8.15)]{HocHun90}. By Hochster--Huneke \cite[Proposition
6.1(a)]{HocHun90}, it is enough, in order to show that $c$ is a test
element for $R$, to show that $c/1 \in R_{\fp}$ is a test element
for $R_{\fp}$ for each prime ideal $\fp$ of $R$.

By Hochster and J. L. Roberts \cite[Lemma 6.2]{HocRob74}, the
localization $R_{\fp}$ is again $F$-pure; of course, $R_{\fp}$ is
excellent. Furthermore, since $\left(R_{\fp}\right)_{c/1}$, the ring
of fractions of $R_{\fp}$ with respect to the set of powers of the
element $c/1$ of $(R_{\fp})^{\circ}$, is a ring of fractions of
$R_c$, it is regular. It is therefore enough for us to prove the
claim in the case where $(R,\fm)$ is local.

In that case, we can appeal to the Hochster--Huneke Theorem
\ref{in.1} to deduce the existence of $e \in \nn$ such that
$c^{p^e}$ is a test element for $R$. We shall now deduce that $c$
itself is a test element for $R$.

Let $M$ be a finitely generated $R$-module, and let $m \in 0^*_M$.
Therefore $c^{p^e}x^j(1 \otimes m) = 0$ in $R[x,f]\otimes_RM$ for
all $j \in \nn$. Thus
$$
x^ecx^i(1 \otimes m) = c^{p^e}x^{e+i}(1 \otimes m) = 0 \quad
\mbox{for all~} i \in \nn.
$$
However, $R[x,f]\otimes_RM$ is $x$-torsion-free because $R$ is
$F$-pure, and so $cx^i(1 \otimes m) = 0$ for all $i \in \nn$. This
is true for all $m \in 0^*_M$, for each finitely generated
$R$-module $M$. Therefore $c$ is a test element for $R$.
\end{proof}

The main aim of this paper is to show that the theory of graded
annihilators can be used to strengthen the first paragraph of
Theorem \ref{in.2} by the insertion of `big' just before `test
element', and thereby provide an alternative proof that is
independent of the Hochster--Huneke Theorem \ref{in.1} and the
$\Gamma$-construction.

It is a pleasure to record my gratitude to Mordechai Katzman for
many discussions about matters related to the work in this paper,
and particularly about the material in \S 2 below.

\begin{note} The referee has asked me to report that results of
Hochster and Huneke on test elements have been extended by H.
Elitzur in his unpublished 2003 University of Michigan thesis `Tight
closure in Artinian modules'. As Elitzur's work in this context has
not been published, I rely completely on the referee for the
accuracy of the comments in this note.

Elitzur defines a `general test element for $R$' to be an element $c
\in R^{\circ}$ such that, for every $R$-module $L$ and every
submodule $K$ of $L$, an element $m$ of $L$ belongs to $K^*_L$ if
and only if $cx^j$ annihilates $1 \otimes (m+K) \in
(R[x,f]\otimes_R(L/K))_0$ for all $j \gg 0$. The referee reports
that Elitzur's results about general test elements are stated for
$F$-finite rings, and that Proposition 3.4 of Elitzur's thesis uses
Hochster's and Huneke's \cite[Theorem 5.10]{HocHun94} to produce
general test elements.
\end{note}

\section{Some notation and known results}
\label{pl}

This paper builds on the results of \cite{ga} and \cite{gatcti}, and
we shall make much use of notation, terminology and results from
\cite[\S 1]{ga} and \cite[\S 1]{gatcti}.

\begin{ntn}
\label{nt.1z} Let $H$ be a left $R[x,f]$-module. The {\em graded
annihilator $\grann_{R[x,f]}H$ of $H$\/} is defined in
\cite[1.5]{ga} and is the largest graded two-sided ideal of $R[x,f]$
that annihilates $H$. We shall use $\mathcal{G}(H)$ (or
$\mathcal{G}_{R[x,f]}(H)$ when it is desirable to emphasize the ring
$R$) to denote the set of all graded annihilators of
$R[x,f]$-submodules of $H$.

Recall from \cite[1.5]{ga} that an $R[x,f]$-submodule of $H$ is said
to be a {\em special annihilator submodule of $H$\/} if it has the
form $\ann_H(\fB)$ for some {\em graded\/} two-sided ideal $\fB$ of
$R[x,f]$. As in \cite{ga}, we shall use $\mathcal{A}(H)$ to denote
the set of special annihilator submodules of $H$, although we shall
occasionally expand this notation to $\mathcal{A}_{R[x,f]}(H)$ when
it is desirable to specify $R$.
\end{ntn}

A basic technique from \cite{ga} is provided by the following.

\begin{lem} [{\cite[Lemma 1.7(v)]{ga}}] \label{ga1.7(v)} Let $H$ be
left $R[x,f]$-module. Then the map $\Gamma : \mathcal{A}(H) \lra
\mathcal{G}(H)$ given by
$$
\Gamma(N) = \grann_{R[x,f]}N \quad \mbox{for all~} N \in
\mathcal{A}(H)
$$ is an order-reversing bijection. Its inverse\/ $\Gamma^{-1} : \mathcal{G}(H) \lra
\mathcal{A}(H)$, also order-reversing, is given by
$$
\Gamma^{-1}(\fB) = \ann_{H}\fB \quad \mbox{for all~} \fB \in
\mathcal{G}(H).
$$
\end{lem}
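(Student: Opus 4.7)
The plan is to verify, by direct unwinding of the definitions of $\mathcal{A}(H)$, $\mathcal{G}(H)$ and $\grann_{R[x,f]}$, that $\Gamma$ and the proposed $\Gamma^{-1}$ are well-defined maps that are mutually inverse and order-reversing. The argument is essentially the observation that sending a submodule to its (graded) annihilator and sending a graded ideal to its annihilator-in-$H$ form a Galois connection, and that $\mathcal{A}(H)$ and $\mathcal{G}(H)$ are defined precisely to be the sets of closed objects on either side.

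First I would check well-definedness. If $N \in \mathcal{A}(H)$, then $N$ is, in particular, an $R[x,f]$-submodule of $H$, so $\grann_{R[x,f]}N$ is by construction a graded two-sided ideal of $R[x,f]$ and hence belongs to $\mathcal{G}(H)$. Conversely, if $\fB \in \mathcal{G}(H)$ then $\fB$ is graded (being a graded annihilator of some submodule), so $\ann_H\fB$ is a special annihilator submodule, hence lies in $\mathcal{A}(H)$.

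Next I would prove the two composite identities. For $\Gamma^{-1}\circ\Gamma$, let $N \in \mathcal{A}(H)$ and write $N = \ann_H(\fC)$ for some graded two-sided ideal $\fC$. Because $\fC$ annihilates $N$ and is graded, maximality of $\grann$ gives $\fC \subseteq \grann_{R[x,f]}N$, whence $\ann_H(\grann_{R[x,f]}N) \subseteq \ann_H\fC = N$; the reverse inclusion $N \subseteq \ann_H(\grann_{R[x,f]}N)$ is immediate from the fact that $\grann_{R[x,f]}N$ annihilates $N$. Thus $\Gamma^{-1}(\Gamma(N)) = N$. For $\Gamma\circ\Gamma^{-1}$, let $\fB \in \mathcal{G}(H)$, say $\fB = \grann_{R[x,f]}N'$ for some submodule $N'$. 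The inclusion $\fB \subseteq \grann_{R[x,f]}(\ann_H\fB)$ follows because $\fB$ is graded and annihilates $\ann_H\fB$ by definition. Conversely, $N' \subseteq \ann_H(\grann_{R[x,f]}N') = \ann_H\fB$, so $\grann_{R[x,f]}(\ann_H\fB) \subseteq \grann_{R[x,f]}N' = \fB$, using the fact that a larger submodule has a smaller graded annihilator. Hence $\Gamma(\Gamma^{-1}(\fB)) = \fB$. Order-reversal of both maps is immediate: $N_1 \subseteq N_2$ forces every (graded) ideal annihilating $N_2$ to annihilate $N_1$, and dually.

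I do not expect any real obstacle; the entire argument is formal. The only point that warrants any attention is to carry the adjective \emph{graded} through the four inclusions above, so that one never compares a two-sided ideal of $R[x,f]$ with its (possibly strictly larger) non-graded annihilator of a submodule. Since $\grann$ is graded by construction and we only ever apply $\ann_H$ to graded ideals, this issue does not actually arise.
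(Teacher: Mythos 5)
The present paper does not prove this lemma but simply cites \cite[Lemma~1.7(v)]{ga}, so there is no internal proof to compare against. Your Galois-connection argument is correct and complete: well-definedness of both maps, the two composite identities $\Gamma^{-1}\circ\Gamma = \mathrm{id}$ and $\Gamma\circ\Gamma^{-1} = \mathrm{id}$, and order-reversal all unwind directly from the maximality of $\grann_{R[x,f]}$ among \emph{graded} two-sided annihilating ideals, and you rightly flag that keeping the ideals graded throughout is exactly what makes each appeal to that maximality legitimate.
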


When the left $R[x,f]$-module $H$ is $x$-torsion-free,
$\grann_{R[x,f]}H$ has the form $$\fb R[x,f] = \bigoplus _{n\in\nn}
\fb x^n$$ for some radical ideal $\fb$ of $R$ (by \cite[Lemma
1.9]{ga}), and, in that case, we write $\mathcal{I}(H)$ for the set
of (necessarily radical) ideals $\fc$ of $R$ for which there is an
$R[x,f]$-submodule $N$ of $H$ such that $\grann_{R[x,f]}N = \fc
R[x,f]$; in these circumstances, the members of $\mathcal{I}(H)$ are
referred to as the {\em $H$-special $R$-ideals}, and we have
$\mathcal{G}_{R[x,f]}(H) = \left\{\fb R[x,f] : \fb \in
\mathcal{I}(H)\right\}$. Lemma \ref{ga1.7(v)}, in this special case,
leads to the following.

\begin{prop} [{\cite[Proposition 1.11]{ga}}] \label{ga1.11} Let $G$
be an $x$-torsion-free left $R[x,f]$-module.

There is an order-reversing bijection $\Delta : \mathcal{A}(G) \lra
\mathcal{I}(G)$ given by
$$
\Delta (N) = \left(\grann_{R[x,f]}N\right)\cap R = (0:_RN) \quad
\mbox{for all~} N \in \mathcal{A}(G).
$$
The inverse $\Delta^{-1} : \mathcal{I}(G) \lra \mathcal{A}(G),$ also
order-reversing, is given by
$$
\Delta^{-1}(\fb) = \ann_G\left(\fb R[x,f])\right) \quad \mbox{for
all~} \fb \in \mathcal{I}(G).
$$
\end{prop}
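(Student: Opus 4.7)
The plan is to deduce Proposition \ref{ga1.11} as an essentially immediate specialization of Lemma \ref{ga1.7(v)} to the $x$-torsion-free setting, using the structural fact (imported from \cite[Lemma 1.9]{ga}) that for an $x$-torsion-free left $R[x,f]$-module every graded annihilator is generated by its degree-zero component.

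First I would observe that every $R[x,f]$-submodule of the $x$-torsion-free module $G$ is again $x$-torsion-free, so in particular every $N\in\mathcal{A}(G)$ is $x$-torsion-free. Applying the cited result to $N$, $\grann_{R[x,f]}N$ therefore has the form $\fb R[x,f]=\bigoplus_{n\in\nn}\fb x^n$ for a (unique) radical ideal $\fb$ of $R$, and this $\fb$ lies in $\mathcal{I}(G)$ by the definition of $\mathcal{I}(G)$ given just before the proposition. Taking the degree-zero component of $\grann_{R[x,f]}N=\fb R[x,f]$ yields $\fb=\left(\grann_{R[x,f]}N\right)\cap R$, and since a homogeneous element of degree $0$ annihilates $N$ if and only if the underlying element of $R$ does, this component is exactly $(0:_R N)$. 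Hence the rule $N\mapsto\left(\grann_{R[x,f]}N\right)\cap R$ defines a map $\Delta:\mathcal{A}(G)\lra\mathcal{I}(G)$ satisfying both of the displayed formulas for $\Delta$.

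Next I would introduce the tautological order-preserving bijection $\iota:\mathcal{I}(G)\lra\mathcal{G}(G)$ given by $\iota(\fb)=\fb R[x,f]$; that this really is a bijection is precisely the identification $\mathcal{G}_{R[x,f]}(G)=\{\fb R[x,f]:\fb\in\mathcal{I}(G)\}$ recorded immediately before the proposition, with inverse $\fB\mapsto\fB\cap R$. The computation in the previous paragraph shows that $\Delta=\iota^{-1}\circ\Gamma$, where $\Gamma:\mathcal{A}(G)\lra\mathcal{G}(G)$ is the order-reversing bijection of Lemma \ref{ga1.7(v)}. As a composite of an order-reversing and an order-preserving bijection, $\Delta$ is itself an order-reversing bijection, with inverse $\Delta^{-1}=\Gamma^{-1}\circ\iota$. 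Unwinding this gives $\Delta^{-1}(\fb)=\Gamma^{-1}(\fb R[x,f])=\ann_G(\fb R[x,f])$, exactly as stated.

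I do not anticipate a genuine obstacle: all substantive content has been loaded into Lemma \ref{ga1.7(v)} and into the $x$-torsion-free structure theorem from \cite[Lemma 1.9]{ga}. The only thing needing care is keeping the two bijections (the order-reversing $\Gamma$ between $\mathcal{A}(G)$ and $\mathcal{G}(G)$, and the order-preserving $\iota$ between $\mathcal{I}(G)$ and $\mathcal{G}(G)$) properly aligned so that their composite has the correct variance and so that the two stated formulas for $\Delta(N)$ — one in terms of the graded annihilator, one as the ordinary ideal quotient $(0:_R N)$ — are seen to coincide.
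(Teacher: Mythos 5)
Your proof is correct and follows exactly the route the paper indicates: Proposition \ref{ga1.11} is imported from \cite{ga}, and the paper's only comment on its derivation is that ``Lemma \ref{ga1.7(v)}, in this special case, leads to the following,'' which is precisely the composition $\Delta = \iota^{-1}\circ\Gamma$ (and $\Delta^{-1} = \Gamma^{-1}\circ\iota$) that you spell out, using \cite[Lemma 1.9]{ga} to identify $\mathcal{G}(G)$ with $\mathcal{I}(G)$ via $\fb\mapsto\fb R[x,f]$.
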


The following main result from \cite{ga} will also play a key r\^ole
in this paper.

\begin{thm} [{\cite[Corollary 3.11]{ga}}] \label{ga3.11} Let $G$ be
an $x$-torsion-free left $R[x,f]$-module. Suppose that $G$ is either
Artinian or Noetherian as an $R$-module. Then the set
$\mathcal{I}(G)$ of $G$-special $R$-ideals is finite.
\end{thm}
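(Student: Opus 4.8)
The plan is to reduce the finiteness of $\mathcal{I}(G)$ to the existence of a finite set of primes of $R$ that controls every $G$-special $R$-ideal. Since $G$ is $x$-torsion-free, $\grann_{R[x,f]}G = \fb_0 R[x,f]$ for a radical ideal $\fb_0$ of $R$ (by \cite[Lemma 1.9]{ga}); every $G$-special $R$-ideal contains $\fb_0$, and replacing $R$ by $R/\fb_0$ loses nothing, so I may assume $\grann_{R[x,f]}G = 0$, whence $R$ is reduced. By Proposition~\ref{ga1.11} the $G$-special $R$-ideals are precisely the ideals $(0:_R N)$ with $N \in \mathcal{A}(G)$, and each of them is radical, hence the intersection of its finitely many minimal primes. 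It therefore suffices to produce a \emph{finite} set $\mathcal{P}$ of primes of $R$ containing the minimal primes of every $G$-special $R$-ideal: then $\fc \mapsto \{\,\text{minimal primes of }\fc\,\}$ embeds $\mathcal{I}(G)$ into the power set of $\mathcal{P}$, so that $|\mathcal{I}(G)| \le 2^{|\mathcal{P}|}$.

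If $G$ is \emph{Noetherian} as an $R$-module this is easy: take $\mathcal{P} = \ass_R G$, which is finite. Given $\fc = (0:_R N) \in \mathcal{I}(G)$ (with $N \in \mathcal{A}(G)$) and a prime $\fp$ minimal over $\fc = \ann_R N$, the module $N$ is finitely generated, so $\fp$ is a minimal element of $\Supp_R N$, and therefore $\fp \in \ass_R N \subseteq \ass_R G$.

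If $G$ is \emph{Artinian} as an $R$-module, annihilators of submodules of $G$ are not controlled by $\ass_R G$, and the extra leverage must come from the hypothesis $\grann_{R[x,f]}N = \fc R[x,f]$, which says that $\ann_R(x^n N) = \fc$ for \emph{all} $n \in \nn$. As $G$ is $x$-torsion-free, the descending chain $N \supseteq xN \supseteq x^2 N \supseteq \cdots$ of $R$-submodules of the Artinian module $G$ stabilises at a \emph{nonzero} $R[x,f]$-submodule $N_\infty$ on which $x$ acts bijectively and which still satisfies $\ann_R N_\infty = \fc$; so it is enough to bound the minimal primes of $\ann_R M$ as $M$ runs over the nonzero, $x$-divisible, $x$-torsion-free Artinian $R[x,f]$-submodules of $G$. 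First decompose $G = \bigoplus_{i=1}^{t}\Gamma_{\fm_i}(G)$ over the finitely many maximal ideals $\fm_1,\dots,\fm_t$ of $\Supp_R G$; since $x$ preserves each $\fm_i$-torsion summand this is a decomposition of $R[x,f]$-modules, and the graded annihilator of a direct sum is the intersection of the graded annihilators of the summands, so every $G$-special $R$-ideal is an intersection of members of the sets $\mathcal{I}(\Gamma_{\fm_i}(G))$ --- reducing us to the case where $G$ is $\fm$-torsion for a single maximal ideal $\fm$, that is, an Artinian module over the complete local ring $\widehat{R_\fm}$ (which inherits a Frobenius, hence a Frobenius skew polynomial ring). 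Matlis duality over $\widehat{R_\fm}$ then replaces $G$ by a finitely generated module $D(G)$ equipped with a $p^{-1}$-linear (Cartier-type) operator $\kappa$ dual to $x$, replaces the $R[x,f]$-submodules of $G$ by the $\kappa$-compatible quotients of $D(G)$, and turns the $G$-special $R$-ideals into annihilator ideals attached to those quotients; the required finiteness then becomes a statement about the associated primes of the $\kappa$-stable quotients of the Noetherian module $D(G)$.

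The main obstacle is the Artinian case. Erecting the Matlis-duality dictionary --- matching $G$-special $R$-ideals with intrinsic invariants of the Noetherian Cartier-type module $D(G)$, and checking compatibility with passage to the completion and with localisation --- is one task; the genuine difficulty is the finiteness theorem for the associated primes of the $\kappa$-stable quotients of $D(G)$, which really does require exploiting the operator $\kappa$ and not merely the $R$-module structure (over $\widehat{R_\fm} = k[[y,z]]$, say, a bare finitely generated module can have quotients whose associated primes range over infinitely many height-one primes, so it is the $\kappa$-compatibility that makes the set finite). This is precisely where one must invoke, or reprove, the deeper structure theory of $x$-torsion-free modules in \cite{ga} that underlies Corollary 3.11; by comparison the lattice-theoretic reduction and the Noetherian case are routine.
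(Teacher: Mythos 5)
This statement is imported verbatim from \cite[Corollary 3.11]{ga}; the present paper offers no proof of it, so the only meaningful question is whether your blind argument is itself complete. It is not.

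Your Noetherian case is correct and clean: every $\fc \in \mathcal{I}(G)$ is radical and equals $\ann_R N$ for a finitely generated $N \in \mathcal{A}(G)$, so $\Min(\fc) = \Min(\Supp_R N) \subseteq \ass_R N \subseteq \ass_R G$, and a radical ideal is recovered from its set of minimal primes, giving $|\mathcal{I}(G)| \le 2^{|\ass_R G|}$. The preliminary reductions you set up for the Artinian case are also sound: factoring out $\grann_{R[x,f]}G$, replacing $N$ by the stable tail $N_\infty = Rx^nN$ of the chain $N \supseteq RxN \supseteq Rx^2N \supseteq \cdots$ (note it is $Rx^nN$, not $x^nN$, that is an $R$-submodule), decomposing $G = \bigoplus_i \Gamma_{\fm_i}(G)$ over the finitely many maximal ideals in $\Supp_R G$, and passing to $\widehat{R_{\fm}}$ and its Matlis dual.

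But after all that bookkeeping you arrive at the statement that actually carries the content of the theorem --- that the $\kappa$-stable (equivalently, $x$-compatible) quotients of the dual Noetherian module have only finitely many associated primes --- and you explicitly decline to prove it, writing that here ``one must invoke, or reprove, the deeper structure theory of $x$-torsion-free modules in \cite{ga} that underlies Corollary 3.11.'' That is an appeal to the very result you were asked to establish. Your $k[[y,z]]$ example correctly shows that the finiteness cannot be extracted from the $R$-module structure of an Artinian $G$ alone, so the whole weight of the theorem rests precisely on the mechanism by which the $x$-action restricts the possible annihilator primes; no such mechanism is supplied. Everything you have written is a correct reduction to the hard case, and then a pointer to where the hard case lives, which leaves a genuine gap.
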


In \cite[\S 4]{ga}, the author applied Theorem \ref{ga3.11}, in the
case where $(R,\fm)$ is an $F$-injective Gorenstein local ring of
positive dimension $d$, to the `top' local cohomology module $H :=
H^{d}_{\fm}(R)$ of $R$. (It is worth pointing out that a Gorenstein
local ring of characteristic $p$ is $F$-injective if and only if it
is $F$-pure.) The statement that $R$ is $F$-injective implies that
$H$, with its natural structure as a left $R[x,f]$-module, is
$x$-torsion-free, and this implies that $R$ must be reduced. Note
also that, in this case, $H^d_{\fm}(R) \cong E_R(R/\fm)$. Theorem
\ref{ga3.11} yields the finite set $\mathcal{I}(H)$ of radical
ideals of $R$. Let $\fb$ denote the smallest ideal of positive
height in $\mathcal{I}(H)$ (interpret $\height R$ as $\infty$). In
\cite[Corollary 4.7]{ga} it was shown that, if $c$ is any element of
$\fb\cap R^{\circ}$, then $c$ is a test element for $R$, and that
$\fb$ is the test ideal $\tau(R)$ of $R$ (that is (in this case),
the ideal of $R$ generated by all test elements of $R$). These
results were obtained without the assumption that $R$ is excellent.

In \cite{gatcti}, the author generalized the above-described results
of \cite[Corollary 4.7]{ga} to the case where $(R,\fm)$ is local and
$E := E_R(R/\fm)$ carries a structure of $x$-torsion-free left
$R[x,f]$-module. In this situation, Theorem \ref{ga3.11} again
yields the finite set $\mathcal{I}(E)$ of radical ideals of $R$. One
of the main results of \cite{gatcti} is that, if $\fb$ is the
smallest ideal of positive height in $\mathcal{I}(E)$, then each
element of $\fb \cap R^{\circ}$ is a test element for modules for
$R$.

In the case when the local ring $(R,\fm)$ is $F$-pure (and $E$ is as
above), the graded left $R[x,f]$-module $R[x,f]\otimes_RE$ is
$x$-torsion-free; in this paper, we shall use a technique similar to
that of \cite[Theorem 3.5]{gatcti} to show that
$\mathcal{I}(R[x,f]\otimes_RE)$ is a finite set, and this will
enable us to draw conclusions about tight closure test elements.

\begin{ntn}
\label{nt.1} The notation introduced in the Introduction will be
maintained. The symbols $\fa$ and $\fb$ will always denote ideals of
$R$.

For $n \in \Z$, we shall denote the $n$th component of a $\Z$-graded
left $R[x,f]$-module $G$ by $G_n$. If $\phi : L \lra M$ is a
homogeneous homomorphism of $\Z$-graded left $R[x,f]$-modules (of
degree $0$), then the notation $\phi = \bigoplus_{n\in\Z}\phi_n:
\bigoplus_{n\in\Z}L_n \lra \bigoplus_{n\in\Z}M_n$ will indicate that
$\phi_n : L_n \lra M_n$ is the restriction of $\phi$ to $L_n$ (for
all $n \in\Z$). As in \cite[Notation 1.1]{gatcti}, we denote the
{\em $n$th shift functor\/} on the category of $\Z$-graded left
$R[x,f]$-modules and homogeneous homomorphisms (of degree $0$) by $
(\: {\scriptscriptstyle \bullet} \:)(n)$. Note that the identity
functor on this category is the $0$th shift functor.
\end{ntn}

\begin{rmk}
\label{pl.0} Note that $Rx$, which is a left $R$-submodule of
$R[x,f]$, has a structure of right $R$-module via $f$, that is $rxr'
= r'^prx$ for all $r,r' \in R$. Note also that, for a given
$R$-module $M$, there is a bijective correspondence between the set
of structures as left $R[x,f]$-module on $M$ (extending its
$R$-module structure) and the set $$\Hom_R(Rx\otimes_RM, M),$$ under
which such a structure corresponds to the $R$-homomorphism $\phi$
for which $$\phi(rx \otimes m) = rxm \quad \mbox{for all~} r \in R
\mbox{~and~} m \in M.$$ Furthermore, given an $R$-homomorphism
$\theta : Rx\otimes_RM \lra M$, the corresponding left
$R[x,f]$-module structure on $M$ is such that $xm = \theta (x
\otimes m)$ for all $m \in M$.
\end{rmk}

We shall make several uses of the following lemma of M. Hochster and
J. L. Roberts.

\begin{lem}[M. Hochster and J. L. Roberts {\cite[Lemma 6.2 and Corollary 6.13]{HocRob74}}]
\label{mr.HR} Assume that $R$ is $F$-pure. Then, for each $\fp \in
\Spec (R)$,
\begin{enumerate}
\item the localization $R_{\fp}$ is $F$-pure, and
\item the completion $\widehat{R_{\fp}}$ of $R_{\fp}$ is $F$-pure.
\end{enumerate}
\end{lem}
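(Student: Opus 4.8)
The plan is to derive both parts directly from the characterisation of $F$-purity recalled in the Introduction: $R$ is $F$-pure precisely when, for every $R$-module $N$, the map $\psi_N : N \lra Rx\otimes_RN$ given by $\psi_N(g)=x\otimes g$ is injective. For part (i), fix $\fp\in\Spec(R)$ and let $N$ be an arbitrary $R_{\fp}$-module, regarded also as an $R$-module. The natural ring homomorphism $R[x,f]\lra R_{\fp}[x,f_{\fp}]$ (with $R_{\fp}[x,f_{\fp}]$ the Frobenius skew polynomial ring of $R_{\fp}$) restricts, in degree $1$, to an $(R,R)$-bimodule homomorphism $Rx\lra R_{\fp}x$, and applying $-\otimes_RN$ yields a natural map
$$
\lambda_N : Rx\otimes_RN\lra R_{\fp}x\otimes_RN = R_{\fp}x\otimes_{R_{\fp}}N ,
$$
the last identification holding because $N$ is already an $R_{\fp}$-module. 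I would check that $\lambda_N$ is an isomorphism with $\lambda_N\circ\psi_N = \psi'_N$, where $\psi'_N$ denotes the corresponding map for $R_{\fp}$; this is routine, the one substantive point being that $\fp$ is prime, so that $s\notin\fp$ forces $f(s)=s^p\notin\fp$ and the formation of the bimodule $Rx$ commutes with localisation at $\fp$ (for surjectivity one uses $(a/s)x = (as^{p-1}x)\cdot(1/s)$ in $R_{\fp}x$, with $as^{p-1}\in R$). Granting this, injectivity of $\psi_N$ — which holds because $R$ is $F$-pure — forces injectivity of $\psi'_N$; as $N$ was an arbitrary $R_{\fp}$-module, $R_{\fp}$ is $F$-pure.

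For part (ii), I would first invoke part (i) to reduce to the situation in which $(R,\fm)$ is local and $F$-pure, and then show that the $\fm$-adic completion $\widehat{R}$, with maximal ideal $\widehat{\fm}=\fm\widehat{R}$, is $F$-pure. The crucial reduction is that $F$-purity of a local ring can be tested on the single Artinian module $E:=E_R(R/\fm)$. Indeed, $E$ is an injective cogenerator for the category of $R$-modules, and a chase around the naturality square of $\psi$ shows that $R$ is $F$-pure if and only if $\psi_E : E\lra Rx\otimes_RE$ is injective: if $0\neq n\in\ker\psi_N$ for some $R$-module $N$, then, $R$ being local, the cyclic submodule $Rn$ surjects onto $R/\fm$, so composing with $R/\fm\hookrightarrow E$ and extending by injectivity of $E$ gives $g:N\lra E$ with $g(n)\neq0$, whence $\psi_E(g(n))=0$ by naturality, contradicting injectivity of $\psi_E$. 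Since $E$ is also $E_{\widehat{R}}(\widehat{R}/\widehat{\fm})$ and $\widehat{R}$ is local, it is enough to produce an isomorphism $\widehat{R}x\otimes_{\widehat{R}}E\cong Rx\otimes_RE$ compatible with the two copies of $\psi_E$. Writing $E$ as the union of its finite-length submodules $E_n=(0:_E\fm^n)=(0:_E\widehat{\fm}^n)$ and using that tensor products commute with direct limits, it suffices to identify $\widehat{R}x\otimes_{\widehat{R}}E_n$ with $Rx\otimes_RE_n$; but $E_n$ is annihilated by a power of $\widehat{\fm}$, so this tensor product depends only on $\widehat{R}x\otimes_{\widehat{R}}\widehat{R}/\widehat{\fm}^m$ for suitable $m$, which, via the Frobenius-compatible isomorphisms $\widehat{R}/\widehat{\fm}^m\cong R/\fm^m$, is the same bimodule as $Rx\otimes_RR/\fm^m$. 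This gives the required isomorphism, and hence part (ii).

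The main obstacle will be that the short argument used for part (i) is not available for part (ii): although $R\lra\widehat{R}$ is faithfully flat, the bimodule $Rx$ does \emph{not} commute with completion, because when $R$ is not $F$-finite the natural map $Rx\otimes_R\widehat{R}\lra\widehat{R}x$ need not even be surjective (for instance with $R=k[t]_{(t)}$ and $k$ a field of characteristic $p$ with $[k:k^p]$ infinite). The resolution is to restrict to the cogenerator $E$, which is $\fm$-power-torsion: once attention is confined to $E$, the bimodule $Rx$ is only ever tensored with finite-length modules, on which passage to the completion changes nothing, so the failure of $Rx$ to commute with completion becomes irrelevant. Recognising that one must make this detour, rather than imitating part (i), is the essential point of the proof.
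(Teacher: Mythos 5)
The paper does not prove this lemma at all: it simply cites Hochster and Roberts \cite[Lemma 6.2 and Corollary 6.13]{HocRob74} and moves on, so there is no in-paper proof against which to compare your argument. Your proposal supplies a genuine, self-contained proof, and it is correct.

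For part (i) the substance is exactly as you describe. A tidy way to dispose of the ``routine'' injectivity of $\lambda_N$ is to check directly that the $(R,R_{\fp})$-bimodule map $Rx\otimes_R R_{\fp}\lra R_{\fp}x$, $rx\otimes(a/s)\mapsto (ra^p/s^p)x$, is an isomorphism: after clearing denominators every element of $Rx\otimes_R R_{\fp}$ has the form $rx\otimes(1/s)$, and $r/s^p=0$ in $R_{\fp}$ forces $tr=0$ for some $t\notin\fp$, whence $rx\otimes(1/s)=rt^px\otimes(1/(ts))=0$. Then $\lambda_N$ is obtained by applying $-\otimes_{R_{\fp}}N$ to this isomorphism, which gives both its bijectivity and its compatibility with $\psi_N$ and $\psi'_N$ in one stroke.

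For part (ii) you have correctly put your finger on the real issue, namely that $Rx\otimes_R(-)$ does not commute with completion when $R$ is not $F$-finite, and the detour through $E=E_R(R/\fm)$ with the reduction to the finite-length submodules $E_n=(0:_E\fm^n)$ is the right remedy. The cogenerator criterion and its naturality chase are fine, and the bimodule identification works because $Rx\otimes_R R/\fm^n\cong R/(\fm^n)^{[p]}$ and $(\fm^n)^{[p]}$ contains a power of $\fm$, so this module has finite length and is unchanged by $\fm$-adic completion. For what it is worth, this is essentially the route Hochster and Roberts themselves take in the cited source: they establish an injective-hull test for $F$-purity of a local ring and pass to the completion via $E_R(R/\fm)=E_{\widehat{R}}(\widehat{R}/\widehat{\fm})$. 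So your proof differs from the paper only in that the paper gives none; it lines up well with the original reference.
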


We shall also need the following result due to R. Fedder.

\begin{thm} [R. Fedder {\cite[Theorem 1.12]{Fedde83}}]
\label{mr.1} Suppose that $R = S/\fa$, where $(S,\fn)$ is a regular
local ring of characteristic $p$ and $\fa$ is a proper ideal of $S$.
Then $R$ is $F$-pure if and only if $(\fa^{[p]} : \fa) \not\subseteq
\fn^{[p]}$.
\end{thm}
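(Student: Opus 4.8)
The plan is to translate $F$-purity of $R = S/\fa$ into a splitting condition for the Frobenius map, and then to express that splitting condition in terms of the colon ideal $(\fa^{[p]} : \fa)$ using the perfect pairing furnished by the canonical module of the regular local ring $S$. First I would use Remark~\ref{pl.0}: giving $R$ an $F$-purity witness amounts to finding, for $N = R$, a left inverse for the map $\psi_R : R \lra Rx \otimes_R R$. Equivalently, since $Rx \otimes_R R \cong {}^f\! R$ (i.e.\ $R$ with the right structure twisted by $f$), $R$ is $F$-pure iff the Frobenius $R \lra f_*R$ splits as a map of $R$-modules; and by faithful flatness considerations one reduces first to the completion, so we may assume $S$ is complete. (That $R_{\fp}$ and $\widehat{R_{\fp}}$ are $F$-pure when $R$ is, and conversely that $F$-purity descends along the faithfully flat completion map, is exactly the content recorded in Lemma~\ref{mr.HR}.)

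Next I would move the splitting question up to the regular ring $S$. An $R$-linear splitting of Frobenius on $R = S/\fa$ is the same as an $S$-linear map $\varphi : f_*S \lra S$ with $\varphi(f_*\fa) \subseteq \fa$ and $\varphi(f_*1) = 1$ (here $f_*S$ denotes $S$ regarded as an $S$-module via $f$, which as an $S[x,f]$-graded component is $Sx$ twisted appropriately). The key structural input is that, for the complete regular local ring $(S,\fn)$, the module $\Hom_S(f_*S, S)$ is free of rank one over $f_*S$; fix a generator $\Phi$ (the ``trace'' / Grothendieck-dual of Frobenius). Every $\varphi$ as above is then $\varphi = \Phi(f_*(u) \cdot -)$ for a unique $u \in S$, and one checks that the condition $\varphi(f_*\fa) \subseteq \fa$ is equivalent to $u \in (\fa^{[p]} : \fa)$, while the condition $\varphi(f_*1) \in$ ``a unit'' (which can be arranged after rescaling precisely when it is not forced into $\fn$) is equivalent to $u \notin \fn^{[p]}$. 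Putting these together: such a $\varphi$ (with $\varphi(f_*1)$ a unit, equivalently $=1$ after adjustment) exists iff $(\fa^{[p]}:\fa) \not\subseteq \fn^{[p]}$.

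The main obstacle, and the step deserving the most care, is the passage in the previous paragraph: identifying $\Hom_S(f_*S,S)$ as free of rank one and tracking exactly how the condition ``$\varphi(\fa^{[p]}$-part$) \subseteq \fa$'' and ``$\varphi$ is surjective'' transform under the identification $\varphi \leftrightarrow u$. Concretely, writing $S$ as a power series ring $k[[y_1,\dots,y_d]]$, the generator $\Phi$ picks out the coefficient of the monomial $y_1^{p-1}\cdots y_d^{p-1}$ (up to a $p$-th power), and one must verify (a) that $\Phi(f_*(u)\cdot f_*g) \in \fa$ for all $g \in \fa$ iff $ug \in \fa^{[p]}$ for all $g \in \fa$, i.e.\ $u \in (\fa^{[p]}:\fa)$ — this uses that $\fa^{[p]} = \fn^{[p]}$-contraction behaves well, and the freeness of $f_*S$ over $S$; and (b) that the whole $\varphi$ can be scaled to send $f_*1 \mapsto 1$ exactly when $u$ can be chosen with $\Phi(f_*u) \notin \fn$, which after chasing through the monomial description is exactly $u \notin \fn^{[p]}$. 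Once these two equivalences are in hand, the theorem follows by combining them with the reduction to the complete case and Remark~\ref{pl.0}.
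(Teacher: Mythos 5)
The paper does not prove Fedder's theorem: it is quoted from \cite{Fedde83} (as Theorem~1.12 there) and used as a black box, so there is no in-house proof against which to compare yours. Assessing your sketch on its own terms, the principal gap is the claim that $\Hom_S(f_*S,S)$ is free of rank one over $f_*S$. That duality holds only when $S$ is $F$-finite, i.e.\ when $f_*S$ is a finitely generated $S$-module; for the complete regular local ring $S=k[[y_1,\dots,y_d]]$ this is equivalent to $[k:k^p]<\infty$, which is not among the hypotheses of the theorem. Passing to the completion (your first reduction) leaves the residue field unchanged and so does not repair this. When $k$ fails to be $F$-finite, $f_*S$ is free of infinite rank over $S$, so $\Hom_S(f_*S,S)$ is a direct product of infinitely many copies of $S$ and is not cyclic over $f_*S$; there is no generator $\Phi$, and the parametrization of all splittings in the form $\varphi=\Phi(f_*(u)\cdot-)$ -- on which the implication ``$R$ $F$-pure $\Rightarrow (\fa^{[p]}:\fa)\not\subseteq\fn^{[p]}$'' depends -- is simply unavailable. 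Your explicit description of $\Phi$ as extracting the coefficient of $y_1^{p-1}\cdots y_d^{p-1}$ also tacitly assumes $k$ perfect; for imperfect $F$-finite $k$ the generating ``monomial'' must involve a $p$-basis element of $k$ as well.

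The forward implication can be rescued without $F$-finiteness: given $u\in(\fa^{[p]}:\fa)\setminus\fn^{[p]}$, expand $u$ in a free $S$-basis of $f_*S$ consisting of the $y^a\lambda$ with $0\le a_i<p$ and $\lambda$ ranging over a $p$-basis of $k$, choose a basis element on which $u$ has a unit coefficient, and take $\Phi$ to be the corresponding coordinate projection. The converse, however, genuinely needs another tool once $f_*S$ has infinite rank. Fedder's own argument replaces the rank-one $\Hom$ duality with local (Matlis) duality over the complete Gorenstein local ring $S$, which requires no $F$-finiteness; alternatively one could impose $F$-finiteness or invoke a $\Gamma$-construction reduction to it. As written, your argument proves a strictly weaker statement than the one the paper quotes.
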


\section{\sc Homomorphic images of regular local rings of characteristic $p$}
\label{hrlr}

Suppose that $(R,\fm)$ is local. Recall that $E_R(R/\fm)$ has a
natural structure as a module over the completion $\widehat{R}$ of
$R$. Therefore, a structure as left $R[x,f]$-module on $E_R(R/\fm)$
induces, in a unique way, a structure as left
$\widehat{R}[x,f]$-module on it extending its $R[x,f]$-module
structure. As $\widehat{R}$-module, $E_R(R/\fm) \cong
E_{\widehat{R}}(\widehat{R}/\fm \widehat{R})$, and I. S. Cohen's
Structure Theorem for complete local rings ensures that
$\widehat{R}$ is a homomorphic image of a complete regular local
ring of characteristic $p$.

With these considerations in mind, we are going, in this section, to
consider possible left $R[x,f]$-module structures on $E_R(R/\fm)$
when $R$ is a homomorphic image of a regular local ring of
characteristic $p$.

\begin{ntn} \label{hrlr.1} Throughout this section, we shall assume
that $R = S/\fa$, where $S$ is a regular local ring of
characteristic $p$ and $\fa$ is a proper, non-zero ideal of $S$. For
$s \in S$, we shall denote the natural image of $s$ in $R$ by
$\overline{s}$. We shall only assume that $S$ is complete when this
is explicitly stated. We shall use $\fn$ to denote the maximal ideal
of $S$, so that $\fm := \fn/\fa$ is the maximal ideal of $R$. Set $E
:= E_S(S/\fn)$, and note that $(0:_E\fa) \cong E_R(R/\fm)$ as
$R$-modules (by \cite[10.1.15]{LC}, for example). We shall interpret
$(0:_E\fa)$ as $E_R(R/\fm)$.

Observe that $R[x,f]$ is a homomorphic image of $S[x,f]$ under a
homomorphism that extends the natural surjective homomorphism from
$S$ to $R$ and maps the indeterminate $x$ to $x$. Let $y$ be a new
indeterminate. Since $E \cong H^{\dim S}_{\fn}(S)$, the $S$-module
$E$ has a natural structure as a left $S[y,f]$-module (as described
in, for example, \cite[Reminder 4.1]{ga}). For any element $u \in
S$, it is easy to see (for example, by use of \cite[Lemma 1.3]{KS})
that we can endow $E$ with a structure of left $S[x,f]$-module under
which $xe = uye$ for all $e \in E$. A recurring theme of this
section is the idea of trying to choose a $u$ as above in such a way
that $(0:_E\fa) = E_R(R/\fm)$ is an $S[x,f]$-submodule of $E$, and
so becomes a left $R[x,f]$-module.

For an $S$-submodule $M$ of $E$ and $n \in \nn$, we shall use
$Sy^nM$ to denote the $S$-submodule of $E$ generated by $y^nM :=
\left\{y^nm : m \in M\right\}$. Thus
$$
Sy^nM = \left\{\textstyle{\sum_{i=1}^ts_iy^nm_i : t \in \N,~ s_1,
\ldots,s_t \in S,~ m_1, \ldots, m_t \in M} \right\}.
$$

We shall denote $1 + p + p^2 + \cdots +p^{n-1}$, where $n \in \N$,
by $\nu_n$, and we shall interpret $\nu_0$ as $0$. Note that
\begin{enumerate}
\item $p\nu_n < 1 + p\nu_n = \nu_{n+1}$,
\item $(1-p)\nu_n = 1 - p^n$, and
\item $\nu_1 = 1$.
\end{enumerate}
\end{ntn}

\begin{rmk}
\label{hrlr.2} We use the notation of \ref{hrlr.1}. It follows from
G. Lyubeznik and K. E. Smith \cite[Example 3.7]{LyuSmi01} that, when
$S$ is complete, each $S[x,f]$-module structure on $E$ is such that
there exists $u \in S$ for which $xe = uye$ for all $e \in E$.
\end{rmk}

\begin{lem}
\label{hrlr.3} We use the notation of\/ {\rm \ref{hrlr.1}}. Let $n
\in \N$. Then $(0:_SSy^n(0:_E\fa)) = \fa^{[p^n]}$.
\end{lem}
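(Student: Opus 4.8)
The plan is to show the two inclusions $\fa^{[p^n]} \subseteq (0:_S Sy^n(0:_E\fa))$ and $(0:_S Sy^n(0:_E\fa)) \subseteq \fa^{[p^n]}$ separately. Throughout, I would exploit the standard description of $E = E_S(S/\fn)$ as the top local cohomology module $H^d_{\fn}(S)$ (where $d = \dim S$), realized via a Čech complex on a regular system of parameters $s_1,\dots,s_d$ for $S$, so that $E \cong S_{s_1\cdots s_d}/\sum_i S_{s_1\cdots \widehat{s_i}\cdots s_d}$, and the left $S[y,f]$-action sends a class $[z/(s_1\cdots s_d)^k]$ to $[z^p/(s_1\cdots s_d)^{pk}]$. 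A key, repeatedly-used fact is that for an ideal $\fc$ of $S$ one has $(0:_E \fc) \cong E_{S/\fc}(S/\fn)$ and, more importantly for computing in the other direction, $(0 :_S (0:_E\fc)) = \fc$ when $\fc$ is an ideal of the complete (or at least this is true for any ideal, since $E$ is a faithfully injective cogenerator, giving Matlis duality $(0:_S(0:_E\fc)) = \fc$); this Matlis-duality bookkeeping is what converts statements about the $S$-submodule $Sy^n(0:_E\fa)$ of $E$ into statements about ideals of $S$.

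**First inclusion.** For $\fa^{[p^n]} \subseteq (0:_S Sy^n(0:_E\fa))$: take $a \in \fa$ and $e \in (0:_E\fa)$; I must check $a^{p^n} y^n e = 0$ in $E$. Since the $S[y,f]$-action satisfies $y s = s^p y$ (i.e. $y^n s = s^{p^n} y^n$) as operators on $E$, we get $a^{p^n} y^n e = y^n (a e) = y^n \cdot 0 = 0$, using $ae = 0$. Since such elements $a^{p^n}$ generate $\fa^{[p^n]}$ and annihilation is additive, $\fa^{[p^n]}$ annihilates every generator $y^n e$ of $Sy^n(0:_E\fa)$, hence annihilates the whole $S$-submodule it generates. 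This direction is routine.

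**Second inclusion — the main obstacle.** The harder direction is $(0:_S Sy^n(0:_E\fa)) \subseteq \fa^{[p^n]}$; equivalently, by Matlis duality, it suffices to show that the $S$-submodule $Sy^n(0:_E\fa)$ of $E$ satisfies $(0:_S Sy^n(0:_E\fa)) = (0:_S(0:_E\fa^{[p^n]}))$, for which it is enough to prove the equality of submodules $Sy^n(0:_E\fa) = (0:_E \fa^{[p^n]})$ — and in fact only the inclusion $(0:_E\fa^{[p^n]}) \subseteq Sy^n(0:_E\fa)$ is needed for the nontrivial containment of annihilators (since $A \subseteq B$ forces $(0:_S B) \subseteq (0:_S A)$). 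So the crux is: every $e \in E$ killed by $\fa^{[p^n]}$ lies in $Sy^n(0:_E\fa)$. I would prove this by noting that the $n$-th iterate $F^n$ of the natural Frobenius action identifies $E$ with a suitable localization/quotient, and that $y^n : E \to E$ has image exactly $\{[z^{p^n}/(s_1\cdots s_d)^{p^n k}]\}$; writing $e = [z/(s_1\cdots s_d)^m]$ with $\fa^{[p^n]} e = 0$, one shows $\fa^{[p^n]}$-annihilation forces the numerator (up to the module of "trivial" elements in the Čech description) to be a $p^n$-th power times a unit, in fact of the form $w^{p^n}$ with $w$ representing a class killed by $\fa$; this is where regularity of $S$ is essential, via flatness of Frobenius $f^n: S \to S$ over the regular ring $S$, which gives $(\fa^{[p^n]} : a^{p^n}) = (\fa : a)^{[p^n]}$ and more generally lets one "take $p^n$-th roots" compatibly. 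An alternative, cleaner route: use the flatness of the Frobenius functor $F^n_S(-) = S \otimes_{f^n} (-)$ on $S$-modules to get $F^n_S(0:_E\fa) \cong (0:_{F^n_S(E)} \fa^{[p^n]})$, combined with the canonical identification $F^n_S(E) \cong E$ (since $E = H^d_{\fn}(S)$ and $S$ is regular, the Frobenius functor applied to $E$ returns $E$), under which $F^n_S(0:_E\fa) \hookrightarrow E$ has image precisely $Sy^n(0:_E\fa)$; then injectivity of this map (again from flatness, as $(0:_E\fa) \hookrightarrow E$) together with the image computation closes the argument. I expect the bookkeeping identifying $\operatorname{Im}(y^n \colon (0:_E\fa) \to E)$ with $Sy^n(0:_E\fa)$ and verifying it equals $(0:_E\fa^{[p^n]})$ to be the delicate part, and I would lean on \cite[Lemma 1.3]{KS} (cited in Notation~\ref{hrlr.1}) and the flatness of $f^n$ over the regular ring $S$ as the two technical pillars.
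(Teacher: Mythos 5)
Your ``alternative, cleaner route'' is essentially the paper's proof: one regards $Sy^n \otimes_S (-)$ as the $n$th Frobenius functor $F^n_S(-)$, uses flatness of the right $S$-module $Sy^n$ (coming from regularity of $S$) to see that the inclusion-induced map $Sy^n \otimes_S (0:_E\fa) \lra Sy^n\otimes_S E$ is injective and that $Sy^n\otimes_S \Hom_S(S/\fa,E) \cong \Hom_S(S/\fa^{[p^n]}, Sy^n\otimes_S E)$, invokes the isomorphism $Sy^n\otimes_S E \cong E$ (the paper's $\delta_n$, from \cite[Remark 4.2(iii)]{ga}), identifies $Sy^n(0:_E\fa)$ with the image of $Sy^n\otimes_S(0:_E\fa)$ in $E$ via a small commutative square, and finally appeals to the fact that an $S$-module and its Matlis dual have equal annihilators (which, as you correctly note, does not require $S$ to be complete, since $E_S(S/\fn)$ is an injective cogenerator over the local ring $S$). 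The only cosmetic difference is that the paper never asserts the submodule equality $Sy^n(0:_E\fa) = (0:_E\fa^{[p^n]})$; it just transfers the annihilator computation across the $S$-module isomorphism $Sy^n(0:_E\fa)\cong\Hom_S(S/\fa^{[p^n]},E)$, which is marginally more economical.

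By contrast, your primary route, via an explicit \v{C}ech realization of $E$, is not actually carried out at the one point that matters: the claim that $\fa^{[p^n]}$-annihilation of a \v{C}ech class $[z/(s_1\cdots s_d)^m]$ forces the class to be expressible as an $S$-combination of classes of the form $y^n w$ with $w$ killed by $\fa$. That assertion is precisely the hard inclusion $(0:_E\fa^{[p^n]}) \subseteq Sy^n(0:_E\fa)$ repackaged, and you yourself flag it as ``the delicate part''. So treat the \v{C}ech description as motivation and run the flatness argument; your easy direction (that $\fa^{[p^n]}$ annihilates, from $y^n a = a^{p^n} y^n$ and $ae=0$) is fine and, combined with the Matlis-duality step, completes the proof once the hard inclusion is in place.
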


\begin{proof} Since $S$ is regular, the right $S$-module $Sy^n$ is flat, and so the
homomorphism (of left $S$-modules) $Sy^n \otimes _S(0:_E\fa) \lra
Sy^n \otimes_S E$ induced by the inclusion map is injective.
However, by \cite[Remark 4.2(iii)]{ga}, the $S$-homomorphism
$\delta_n:Sy^n \otimes_SE \lra E$ for which $$\delta_n(sy^n \otimes
e) = sy^ne \quad \text{for all~} s \in S \text{~and~} e \in E$$ is
an isomorphism. There is therefore a commutative diagram
\[
\begin{picture}(300,75)(-150,-25)
\put(-55,40){\makebox(0,0){$Sy^n \otimes _S(0:_E\fa)$}}
\put(25,40){\makebox(0,0)[l]{$ Sy^n \otimes_S E$}}
\put(-10,40){\vector(1,0){30}}
\put(-35,10){\makebox(0,0)[l]{$^{\lambda_n }$}}
\put(50,10){\makebox(0,0)[l]{$^{ \delta_n}$}}
\put(34,10){\makebox(0,0)[l]{$^{\cong}$}}
\put(-45,-20){\makebox(0,0){$ Sy^n(0:_E\fa) $}}
\put(10,-14){\makebox(0,0){$^{ \subseteq }$}}
\put(40,-20){\makebox(0,0)[l]{$ E $}}
\put(-15,-20){\vector(1,0){50}} \put(-40,30){\vector(0,-1){40}}
\put(45,30){\vector(0,-1){40}}
\end{picture}
\]
of $S$-modules and $S$-homomorphisms in which the upper horizontal
homomorphism is induced by inclusion and the $S$-homomorphism
$\lambda_n:Sy^n \otimes_S(0:_E\fa) \lra Sy^n(0:_E\fa)$ is such that
$\lambda_n(sy^n \otimes e) = sy^ne$ for all $s \in S$ and $e \in
(0:_E\fa)$. It follows that $\lambda_n$ is an isomorphism, and so
the annihilator of $Sy^n(0:_E\fa)$ is equal to the annihilator of
$Sy^n \otimes _S(0:_E\fa)$. However, since the right $S$-module
$Sy^n$ is flat, there are $S$-isomorphisms
\begin{align*}
Sy^n \otimes _S(0:_E\fa) & \cong Sy^n \otimes _S\Hom_S(S/\fa,E)\\
&\cong \Hom_S(Sy^n \otimes_S S/\fa , Sy^n \otimes_S E)\\ & \cong
\Hom_S(S/\fa^{[p^n]}, E).
\end{align*}
(We have used \cite[Theorem 7.11]{HM} to obtain the second
isomorphism here.) But (even though $S$ might not be complete), an
$S$-module and its Matlis dual have equal annihilators. Therefore
$$(0:_SSy^n(0:_E\fa)) =\left(0:_S\Hom_S(S/\fa^{[p^n]}, E)\right) =
\left(0:_SS/\fa^{[p^n]}\right) = \fa^{[p^n]}.$$
\end{proof}

\begin{lem}
\label{hrlr.4} We use the notation of\/ {\rm \ref{hrlr.1}}. Let $u
\in S$. Put the left $S[x,f]$-module structure on $E$ for which $xe
= uye$ for all $e \in E$. Then $(0:_E\fa)$ is an $S[x,f]$-submodule
of $E$ if and only if $u \in (\fa^{[p]} : \fa)$.

When this condition is satisfied, $E_R(R/\fm)$ is a left
$R[x,f]$-module with $xe = uye$ for all $e \in (0:_E\fa) =
E_R(R/\fm)$ and $re = se$ for any $r \in R$ and $s \in S$ for which
$\overline{s} = r$.
\end{lem}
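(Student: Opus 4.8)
The statement to be proved is Lemma \ref{hrlr.4}: with $E = E_S(S/\fn)$ carrying the left $S[x,f]$-module structure for which $xe = uye$ for all $e \in E$ (here $u \in S$ is fixed), the submodule $(0:_E\fa)$ is an $S[x,f]$-submodule precisely when $u \in (\fa^{[p]}:\fa)$; and, when this holds, $(0:_E\fa) \cong E_R(R/\fm)$ inherits a left $R[x,f]$-module structure as described. The plan is to unravel what it means for $(0:_E\fa)$ to be an $S[x,f]$-submodule, reduce it to the single containment $x(0:_E\fa) \subseteq (0:_E\fa)$, and then translate that into an annihilator condition on $S$ via the computation already carried out in Lemma \ref{hrlr.3} with $n = 1$.

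First I would observe that, since $(0:_E\fa)$ is automatically an $S$-submodule of $E$, being an $S[x,f]$-submodule is equivalent to being closed under multiplication by $x$, i.e.\ to $x \cdot (0:_E\fa) \subseteq (0:_E\fa)$. Now $x e = u y e$ for $e \in (0:_E\fa)$, so the set $x \cdot (0:_E\fa)$ generates the $S$-submodule $u \cdot Sy^1(0:_E\fa)$ of $E$ (using the notation $Sy^n M$ from \ref{hrlr.1}); more precisely $x(0:_E\fa) \subseteq (0:_E\fa)$ as a set of elements is equivalent to $u \cdot Sy(0:_E\fa) \subseteq (0:_E\fa)$ as $S$-modules, since $(0:_E\fa)$ is already an $S$-module and the left-hand set $S$-generates that submodule. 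The condition $u \cdot Sy(0:_E\fa) \subseteq (0:_E\fa)$ says exactly that $\fa$ annihilates $u \cdot Sy(0:_E\fa)$, i.e.\ $u\fa \subseteq (0:_E Sy(0:_E\fa))$ — wait, more carefully: $u\cdot Sy(0:_E\fa) \subseteq (0:_E\fa)$ iff $\fa u\, Sy(0:_E\fa) = 0$ iff $\fa u \subseteq \bigl(0:_S Sy(0:_E\fa)\bigr)$. By Lemma \ref{hrlr.3} applied with $n = 1$, we have $\bigl(0:_S Sy(0:_E\fa)\bigr) = \fa^{[p]}$. Hence the condition becomes $\fa u \subseteq \fa^{[p]}$, that is, $u \in (\fa^{[p]}:\fa)$. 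This establishes the equivalence.

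For the final sentence of the lemma: once $u \in (\fa^{[p]}:\fa)$, the submodule $(0:_E\fa)$ is a left $S[x,f]$-module, and since $\fa$ annihilates it, the $S[x,f]$-action factors through the quotient ring $R[x,f] = S[x,f]/\fa S[x,f]$ (recall from \ref{hrlr.1} that $R[x,f]$ is the homomorphic image of $S[x,f]$ sending $S \to R$ and $x \mapsto x$; and $\fa S[x,f] = \bigoplus_n \fa x^n$ is the relevant graded two-sided ideal). Under the identification $(0:_E\fa) = E_R(R/\fm)$ from \ref{hrlr.1}, this gives the asserted $R[x,f]$-structure, with $xe = uye$ for $e \in (0:_E\fa)$ and $re = se$ whenever $\overline{s} = r$; the latter is just a restatement that the $R$-action is the one induced from the $S$-action.

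The only real step requiring care is the bookkeeping in the second paragraph: making sure that ``$x(0:_E\fa) \subseteq (0:_E\fa)$'' and ``$u\cdot Sy(0:_E\fa) \subseteq (0:_E\fa)$'' are genuinely equivalent (they are, because $(0:_E\fa)$ is an $S$-submodule and $x(0:_E\fa) = u\,y(0:_E\fa)$ $S$-generates $u\cdot Sy(0:_E\fa)$, using that $xr = r^p x$ so that an $S$-linear combination of elements $xe$ equals $x$ applied to — no — equals an element of $u\cdot Sy(0:_E\fa)$ directly via $s(xe) = s(uye) \in u\cdot Sy(0:_E\fa)$), and then correctly transposing $u$ across the annihilator relation. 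Everything else is a direct appeal to Lemma \ref{hrlr.3} with $n=1$ and to the elementary fact that an $R$-module structure on $E_R(R/\fm)$ extends to an $R[x,f]$-structure iff one specifies a compatible action of $x$.
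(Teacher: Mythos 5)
Your proof is correct and uses essentially the same key ingredient as the paper, namely Lemma \ref{hrlr.3} with $n=1$, which identifies $(0:_S Sy(0:_E\fa))$ with $\fa^{[p]}$. The only presentational difference is that you run both implications through a single chain of equivalences terminating in that annihilator computation, whereas the paper handles the $(\Leftarrow)$ direction by an elementary direct calculation (writing $su = \sum_i s_ia_i^p$ and using $a_ie=0$) that avoids invoking Lemma \ref{hrlr.3}, reserving the lemma for the $(\Rightarrow)$ direction only. Your version is a touch more economical; the paper's is marginally more self-contained on the sufficiency side. Your treatment of the last paragraph (factoring the $S[x,f]$-action through $S[x,f]/\fa S[x,f] \cong R[x,f]$) is a sensible expansion of the paper's ``easy to check,'' and the needed fact $\fa x^n(0:_E\fa)=0$ for $n\geq 1$ does indeed follow because $x^ne\in(0:_E\fa)$ once the submodule property is established.
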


\begin{note} In the special case of Lemma \ref{hrlr.4} in which $S$
is complete, the results of the lemma can be approached by means of
the techniques developed by M. Katzman in \cite[\S\S3,4]{MK}.
However, we now have applications in mind where the full generality
of Lemma \ref{hrlr.4} will be required.
\end{note}

\begin{proof} $(\Leftarrow)$ Let $e \in (0:_E\fa)$. We must show that $xe = uye
\in (0:_E\fa)$. So let $s \in \fa$. Since $u \in (\fa^{[p]} : \fa)$,
there exist $a_1, \ldots, a_t \in \fa$ and $s_1, \ldots, s_t \in S$
such that $su = {\textstyle \sum_{i=1}^t s_ia_i^p}. $ Then
$$
sxe = suye = \left({\textstyle \sum_{i=1}^t s_ia_i^p}\right)ye =
{\textstyle \sum_{i=1}^t s_iya_ie} = 0,
$$
since $a_ie = 0$ for all $i = 1, \ldots, t$. Thus $xe \in
(0:_E\fa)$. It follows that $(0:_E\fa)$ is an $S[x,f]$-submodule of
$E$.

$(\Rightarrow)$ Assume that $(0:_E\fa)$ is an $S[x,f]$-submodule of
$E$, so that $suye = 0$ for all $s \in \fa$ and $e \in (0:_E\fa)$.
Thus, on use of Lemma \ref{hrlr.3}, we see that $$u\fa \subseteq
\left(0:_SSy(0:_E\fa)\right) = \fa^{[p]}.$$ Therefore $u \in
(\fa^{[p]} : \fa)$.

The claims in the final paragraph are easy to check.
\end{proof}

\begin{lem}[M. Blickle {\cite[Proposition 3.36]{Blickle}}]
\label{hrlr.4a} We use the notation of\/ {\rm \ref{hrlr.1}}. Suppose
that $S$ is complete, and that $(0:_E\fa)$ has a left
$R[x,f]$-module structure that extends its $R$-module structure.
Then there exists $u \in (\fa^{[p]} : \fa)$ such that $xe = uye$ for
all $e \in (0:_E\fa)$.
\end{lem}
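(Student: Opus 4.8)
The plan is to extend the hypothesised left $R[x,f]$-module structure on $(0:_E\fa)$ to a left $S[x,f]$-module structure on the whole of $E$, in such a way that $(0:_E\fa)$ becomes an $S[x,f]$-submodule on which the new structure restricts to the given one. Once this is achieved, Remark \ref{hrlr.2} (which rests on the result of Lyubeznik and Smith, and uses that $S$ is complete) produces $u \in S$ with $xe = uye$ for all $e \in E$, hence in particular for all $e \in (0:_E\fa)$; and then Lemma \ref{hrlr.4}, applied to the structure $xe = uye$ for which $(0:_E\fa)$ has been arranged to be an $S[x,f]$-submodule, forces $u \in (\fa^{[p]}:\fa)$.

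To carry this out, write $D := (0:_E\fa)$ and let $\iota : D \hookrightarrow E$ be the inclusion. By Remark \ref{pl.0}, the given left $R[x,f]$-module structure on $D$ corresponds to an $R$-homomorphism $Rx\otimes_R D \lra D$, and composing with the natural surjection $Sx\otimes_S D \lra Rx\otimes_R D$ yields an $S$-homomorphism $\theta : Sx\otimes_S D \lra D$ with $\theta(x\otimes e) = xe$ for every $e \in D$, where $xe$ denotes the action in the given structure. Now the key (and only non-formal) step: since $S$ is regular of characteristic $p$, the right $S$-module $Sx$ is flat, so the map $Sx\otimes_S D \lra Sx\otimes_S E$ induced by $\iota$ is injective; and since $E$ is an injective $S$-module, the $S$-homomorphism $\iota\circ\theta : Sx\otimes_S D \lra E$ extends to an $S$-homomorphism $\Phi : Sx\otimes_S E \lra E$. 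By Remark \ref{pl.0} again, $\Phi$ endows $E$ with a left $S[x,f]$-module structure, extending its $S$-module structure, for which $xe = \Phi(x\otimes e)$ for all $e \in E$.

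It then remains to read off the conclusion. Since $\Phi$ restricts on $Sx\otimes_S D$ to $\iota\circ\theta$, for $e \in D$ we have $xe = \Phi(x\otimes e) = \iota(\theta(x\otimes e)) \in D$; as $D$ is already an $S$-submodule of $E$, it follows that $D = (0:_E\fa)$ is a left $S[x,f]$-submodule of $E$ and that the action of $x$ on it agrees with the originally given one. Now Remark \ref{hrlr.2} applies to this $S[x,f]$-module structure on $E$, giving $u \in S$ with $xe = uye$ for all $e \in E$, and hence for all $e \in (0:_E\fa)$ with the originally given action; and Lemma \ref{hrlr.4} then yields $u \in (\fa^{[p]}:\fa)$, as required. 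I expect the extension step to be where the real content sits — it uses flatness of the Frobenius on the regular ring $S$ together with the injectivity of $E$ — but I do not anticipate a genuine obstacle; one just has to be careful that $\Phi$ truly restricts to $\iota\circ\theta$ on $Sx\otimes_S D$, since this single fact delivers both that $(0:_E\fa)$ is an $S[x,f]$-submodule and that the recovered structure is the given one.
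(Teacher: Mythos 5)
Your proposal is correct and follows essentially the same path as the paper: use flatness of $Sx$ over the regular ring $S$ to get injectivity of $Sx\otimes_S(0:_E\fa)\to Sx\otimes_SE$, use injectivity of $E$ to extend the structure map to all of $Sx\otimes_SE$, then read off $u$ from Remark \ref{hrlr.2} and conclude $u\in(\fa^{[p]}:\fa)$ via Lemma \ref{hrlr.4}. The only difference is cosmetic notation; the commuting-diagram argument and the two inputs (flatness of Frobenius on $S$, injectivity of $E$) are identical.
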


\begin{note} This result is included in M. Blickle's PhD
dissertation, but we give a short, self-contained proof for the
convenience of the reader.
\end{note}

\begin{proof} There is a natural surjective ring homomorphism
$S[x,f] \lra R[x,f]$ which extends the natural ring homomorphism $S
\lra R$ and maps $x$ to $x$. We can use this to consider $(0:_E\fa)$
as an $S[x,f]$-module. By Remark \ref{pl.0}, there is therefore an
$S$-homomorphism $\delta : Sx \otimes_S(0:_E\fa) \lra (0:_E\fa)$
such that $\delta(sx \otimes e) = sxe$ for all $s \in S$ and $e \in
(0:_E\fa)$.

Since $S$ is regular, the right $S$-module $Sx$ is flat; therefore
the $S$-homomorphism $j : Sx \otimes_S(0:_E\fa) \lra Sx \otimes_SE$
induced by inclusion is injective. As $E$ is injective as
$S$-module, there is an $S$-homomorphism $\delta': Sx \otimes_SE
\lra E$ which makes the diagram
\[
\begin{picture}(300,75)(-150,-25)
\put(-55,40){\makebox(0,0){$Sx \otimes _S(0:_E\fa)$}}
\put(0,46){\makebox(0,0){$^{\delta}$}}
\put(25,40){\makebox(0,0)[l]{$ (0:_E\fa)$}}
\put(-10,40){\vector(1,0){30}} \put(-35,10){\makebox(0,0)[l]{$^{j
}$}} \put(50,10){\makebox(0,0)[l]{$^{ \subseteq}$}}
\put(-45,-20){\makebox(0,0){$ Sx \otimes _SE $}}
\put(0,-14){\makebox(0,0){$^{ \delta' }$}}
\put(40,-20){\makebox(0,0)[l]{$ E $}}
\put(-15,-20){\vector(1,0){50}} \put(-40,30){\vector(0,-1){40}}
\put(45,30){\vector(0,-1){40}}
\end{picture}
\]
commute.

In view of Remark \ref{pl.0}, there is therefore a structure of left
$S[x,f]$-module on $E$ which is such that $xe = \delta'(x \otimes
e)$ for all $e \in E$; this structure extends the $S[x,f]$-module
structure on $(0:_E\fa)$.

By Lyubeznik's and Smith's result quoted in Remark \ref{hrlr.2},
there exists $u \in S$ for which $xe = uye$ for all $e \in E$, and,
in particular, for all $e \in (0:_E\fa)$. We can now appeal to Lemma
\ref{hrlr.4} to see that $u \in (\fa^{[p]}:\fa)$.
\end{proof}

\begin{rmk} \label{hrlr.5} We again use the notation of\/ {\rm
\ref{hrlr.1}}. In the special case in which $S$ is complete, it
follows from Matlis duality (see, for example, \cite[p.\ 154]{SV})
that each $S$-submodule $M$ of $E$ satisfies $M = (0:_E(0:_SM))$,
and so has the form $(0:_E\fk)$ for a (uniquely determined) ideal
$\fk$ of $S$.
\end{rmk}

\begin{prop}
\label{hrlr.6} We use the notation of\/ {\rm \ref{hrlr.1}}. Let $u
\in (\fa^{[p]} : \fa)$, and put the $S[x,f]$-module structure on $E$
for which $xe = uye$ for all $e \in E$, so that $(0:_E\fa)$ is an
$S[x,f]$-submodule of $E$, by Lemma\/ {\rm \ref{hrlr.4}}. Then
$$
\grann_{S[x,f]}(0:_E\fa) = \bigoplus_{n \in \nn}
\left(\fa^{[p^n]}:u^{\nu_n}\right)x^n.
$$
\end{prop}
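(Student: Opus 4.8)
The plan is to compute $\grann_{S[x,f]}(0:_E\fa)$ straight from the definition, as the largest graded two-sided ideal of $S[x,f]$ that annihilates $(0:_E\fa)$, handling each graded component separately. Note that, with the $S[x,f]$-module structure in force, $(0:_E\fa)$ need not be $x$-torsion-free --- this is governed by Fedder's criterion (Theorem \ref{mr.1}) --- so one should not expect $\grann_{S[x,f]}(0:_E\fa)$ to be of the simple form $\fb S[x,f]$, and a genuinely graded answer is to be anticipated. The first step is to record how $x^n$ acts on $E$: since $xe = uye$ for all $e \in E$ and the ambient $S[y,f]$-module structure obeys $y(se) = s^p y e$, an easy induction, using the identity $\nu_{n+1} = 1 + p\nu_n$ from \ref{hrlr.1}, yields
$$
x^n e = u^{\nu_n} y^n e \qquad \text{for all } e \in E \text{ and } n \in \nn.
$$

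Next I would recall that every graded right ideal of $S[x,f]$ is automatically two-sided and is of the form $\bigoplus_{n\in\nn}\fb_n x^n$ for an ascending chain $\fb_0 \subseteq \fb_1 \subseteq \cdots$ of ideals of $S$ (once $\fb_n \subseteq \fb_{n+1}$, the requirement $x\fb_n x^n = \fb_n^{[p]}x^{n+1} \subseteq \fb_{n+1}x^{n+1}$ is automatic, as $\fb_n^{[p]} \subseteq \fb_n$). Such an ideal annihilates $(0:_E\fa)$ if and only if $\fb_n x^n e = \fb_n u^{\nu_n} y^n e = 0$ for every $n$ and every $e \in (0:_E\fa)$; for $n \geq 1$ this is the condition $\fb_n u^{\nu_n} \subseteq (0:_S Sy^n(0:_E\fa)) = \fa^{[p^n]}$, that is, $\fb_n \subseteq (\fa^{[p^n]}:u^{\nu_n})$, by Lemma \ref{hrlr.3}; and for $n = 0$ it is $\fb_0 \subseteq (0:_S(0:_E\fa)) = \fa = (\fa^{[p^0]}:u^{\nu_0})$, the equality $(0:_S(0:_E\fa)) = \fa$ following from the fact (used already in the proof of Lemma \ref{hrlr.3}) that an $S$-module and its Matlis dual have the same annihilator, applied to $(0:_E\fa) = \Hom_S(S/\fa,E)$. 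So a graded two-sided ideal $\bigoplus_n \fb_n x^n$ annihilates $(0:_E\fa)$ exactly when $\fb_n \subseteq \fc_n := (\fa^{[p^n]}:u^{\nu_n})$ for every $n$.

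It then remains to observe that the family $(\fc_n)_{n\in\nn}$ is itself an ascending chain of ideals: granting this, $\bigoplus_{n\in\nn}\fc_n x^n$ is a graded two-sided ideal of $S[x,f]$ which annihilates $(0:_E\fa)$ and contains every graded two-sided ideal that annihilates $(0:_E\fa)$, so it must equal $\grann_{S[x,f]}(0:_E\fa)$, which is the assertion. This monotonicity is the one place where the hypothesis $u \in (\fa^{[p]}:\fa)$ is genuinely used, and I expect it to be the main (indeed essentially the only) real point of the proof --- everything else being bookkeeping with the numbers $\nu_n$ and a direct appeal to Lemma \ref{hrlr.3}. Since $\nu_{n+1} = \nu_n + p^n$ (a consequence of the elementary identities for $\nu_n$ recorded in \ref{hrlr.1}), one has $\fc_{n+1} = \big((\fa^{[p^{n+1}]}:u^{p^n}):u^{\nu_n}\big)$; and from $u\fa \subseteq \fa^{[p]}$ one gets $u^{p^n}\fa^{[p^n]} = (u\fa)^{[p^n]} \subseteq (\fa^{[p]})^{[p^n]} = \fa^{[p^{n+1}]}$, so $\fa^{[p^n]} \subseteq (\fa^{[p^{n+1}]}:u^{p^n})$ and hence $\fc_n = (\fa^{[p^n]}:u^{\nu_n}) \subseteq \big((\fa^{[p^{n+1}]}:u^{p^n}):u^{\nu_n}\big) = \fc_{n+1}$, as required.
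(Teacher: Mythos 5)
Your proof is correct and essentially the same as the paper's: both reduce the computation to the identity $x^n e = u^{\nu_n}y^n e$ and then identify the degree-$n$ component of the graded annihilator via Lemma \ref{hrlr.3}. The only extra step you supply is the explicit verification that the ideals $(\fa^{[p^n]}:u^{\nu_n})$ ascend with $n$; the paper treats this as automatic, and indeed it follows at once from the $x$-stability of $(0:_E\fa)$ furnished by Lemma \ref{hrlr.4} (if $sx^n$ annihilates $(0:_E\fa)$ then so does $sx^{n+1}$, since $x(0:_E\fa)\subseteq(0:_E\fa)$), without a separate computation from $u\in(\fa^{[p]}:\fa)$.
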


\begin{proof} We can write $\grann_{S[x,f]}(0:_E\fa) = \bigoplus_{n \in
\nn} \fb_nx^n$ for the appropriate ascending chain
$(\fb_n)_{n\in\nn}$ of ideals of $S$. In fact, $$\fb_n = \left\{s
\in S : sx^n \in \ann_{S[x,f]}(0:_E\fa)\right\} \quad \text{for
all~} n \in \nn.$$ But, for $n \in \N$, $s \in S$ and $e \in E$,
$$
sx^ne = s(uy)^ne = su^{1+p+\cdots+p^{n-1}}y^ne = su^{\nu_n}y^ne.
$$
Therefore $\fb_0 = (0:_S(0:_E\fa)) = \fa =
\left(\fa^{[p^0]}:u^{\nu_0}\right)$ and, by Lemma \ref{hrlr.3},
$$\fb_n = \left\{s \in S: su^{\nu_n} \in \left(0:_SSy^n(0:_E\fa)\right) =
\fa^{[p^n]}\right\} = \left(\fa^{[p^n]}:u^{\nu_n}\right) \quad
\text{for all~} n \in \N.$$
\end{proof}

Next, we record some facts concerning primary decompositions of the
ideal $(\fa^{[p]} : \fa)$.

\begin{prop}
\label{hrlr.7} We use the notation of\/ {\rm \ref{hrlr.1}}. Let
$\fb_1, \ldots, \fb_t,\fc$ be ideals of $S$, and let $\fa = \fq_1
\cap \ldots \cap \fq_t$ be a minimal primary decomposition of $\fa$.
\begin{enumerate}
\item We have $(\fb_1 \cap \cdots \cap \fb_t)^{[p^n]} = \fb_1 ^{[p^n]}\cap \cdots \cap
\fb_t^{[p^n]}$ for all $n \in \N$.
\item If $\fq$ is a $\fp$-primary ideal of $S$, then $\fq^{[p]}$ is also
$\fp$-primary.
\item If
$n \in \nn$, then $\fa^{[p^n]} = \fq_1^{[p^n]} \cap \cdots \cap
\fq_t^{[p^n]}$ is a minimal primary decomposition of $\fa^{[p^n]}$.
\item We have $(\fa :\fc)^{[p]} = (\fa ^{[p]}:\fc^{[p]})$ and
$(\fa^{[p]} : \fa) \subseteq \left((\fa :\fc)^{[p]} :
(\fa : \fc)\right)$.
\item If $\fp$ is an associated prime ideal of $\fa$, then $(\fa^{[p]} : \fa)
\subseteq (\fp^{[p]} : \fp)$.
\item Since $0 \neq \fa \neq S$, we have $(\fa^{[p]} : \fa) \neq S$.
If $\fp_1 := \sqrt{\fq_1}$ is a minimal prime ideal of $\fa$, then
$\fp_1$ is a minimal prime ideal of $(\fa^{[p]} : \fa)$ and the
unique $\fp_1$-primary component of $(\fa^{[p]} : \fa)$ is
$(\fq_1^{[p]} : \fq_1)$.
\item If $\fq$ is a $\fp$-primary irreducible ideal of $S$, then $\fq^{[p]}$ is also $\fp$-primary
and irreducible.
\end{enumerate}
\end{prop}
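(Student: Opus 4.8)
My plan is to derive all seven parts from a single fact about $S$. Since $S$ is regular of characteristic $p$, Kunz's theorem tells us that the Frobenius homomorphism $f\colon S\to S$ is flat; being a local homomorphism, it is faithfully flat, and the same holds for every iterate $f^n$. As $\fb^{[p^n]}$ is exactly the extension of $\fb$ along $f^n$, all the standard facts about (faithfully) flat base change of ideals apply. Part (i) is then simply the fact that flat base change commutes with finite intersections of ideals (prove it for two ideals from $0\to S/(\fb_1\cap\fb_2)\to S/\fb_1\oplus S/\fb_2$ and induct), and faithful flatness gives the equivalence $\fb^{[p^n]}\subseteq\fc^{[p^n]}\iff\fb\subseteq\fc$, which I shall reuse.

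The key intermediate assertion I would isolate is that \emph{$\Ass_S(S/\fb^{[p^n]})=\Ass_S(S/\fb)$ for every ideal $\fb$ of $S$ and every $n\in\N$.} To see this, fix $\fP\in\Spec(S)$; then $f^n$ induces a flat local homomorphism $S_\fP\to S_\fP$ whose closed fibre is $S_\fP/\fP^{[p^n]}S_\fP$, and this is Artinian because $\fP S_\fP$ is generated by a regular system of parameters of the regular local ring $S_\fP$, so $\fP^{[p^n]}S_\fP$ is a parameter ideal. The depth formula for a flat local homomorphism then gives $\depth_{S_\fP}\bigl(S_\fP/\fb^{[p^n]}S_\fP\bigr)=\depth_{S_\fP}\bigl(S_\fP/\fb S_\fP\bigr)$; since $S/\fb^{[p^n]}$ and $S/\fb$ have the same support $V(\fb)$, it follows that $\fP$ is associated to one precisely when it is associated to the other. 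Part (ii) is the case $\fb=\fq$ (together with $\sqrt{\fq^{[p]}}=\sqrt{\fq}=\fp$). Part (iii) follows: by (i) and (ii) the displayed expression is a primary decomposition with the same, pairwise distinct, radicals $\fp_i$, and it is irredundant since $\fq_i^{[p^n]}\supseteq\bigcap_{j\neq i}\fq_j^{[p^n]}=\bigl(\bigcap_{j\neq i}\fq_j\bigr)^{[p^n]}$ would force, by faithful flatness, $\fq_i\supseteq\bigcap_{j\neq i}\fq_j$, contrary to irredundancy of the given decomposition.

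Part (iv): the identity $(\fa:\fc)^{[p]}=(\fa^{[p]}:\fc^{[p]})$ is flat base change commuting with colon ideals (legitimate as $\fc$ is finitely generated), and the inclusion is a direct computation --- if $u\fa\subseteq\fa^{[p]}$ and $s\fc\subseteq\fa$ then $us\fc\subseteq u\fa\subseteq\fa^{[p]}$, and multiplying by $c^{p-1}$ for $c\in\fc$ shows $usc^{p}\in\fa^{[p]}$, so $us\in(\fa^{[p]}:\fc^{[p]})=(\fa:\fc)^{[p]}$. Part (v) is then immediate: write $\fp=(\fa:s)$ for a suitable $s$ (possible since $\fp\in\Ass_S(S/\fa)$) and apply (iv) with $\fc=(s)$.

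Part (vi): if $(\fa^{[p]}:\fa)=S$ then $\fa=\fa^{[p]}$, so $\fa=\bigcap_n\fa^{[p^n]}\subseteq\bigcap_n\fn^{[p^n]}\subseteq\bigcap_n\fn^{n}=0$ by Krull's intersection theorem, contradicting $\fa\neq0$. Next, $(\fq_1^{[p]}:\fq_1)=\ann_S(\fq_1/\fq_1^{[p]})$ is $\fp_1$-primary: the module $\fq_1/\fq_1^{[p]}$ is nonzero and finitely generated and, by the key assertion, $\Ass_S(\fq_1/\fq_1^{[p]})\subseteq\Ass_S(S/\fq_1^{[p]})=\{\fp_1\}$, and the annihilator of a nonzero finitely generated module whose only associated prime is $\fp$ is always $\fp$-primary (a zerodivisor $s$ on $S/\ann(M)$ kills a nonzero submodule $tM$ of $M$, whose only associated prime is $\fp$, so $s\in\fp$). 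Finally, since $\fp_1$ is minimal over $\fa$ we have $\fq_jS_{\fp_1}=S_{\fp_1}$ for $j\neq1$, hence $\fa S_{\fp_1}=\fq_1S_{\fp_1}$ and so $(\fa^{[p]}:\fa)S_{\fp_1}=(\fq_1^{[p]}:\fq_1)S_{\fp_1}$, which is primary for the maximal ideal of $S_{\fp_1}$; thus $\fp_1$ is a minimal prime of $(\fa^{[p]}:\fa)$ and its $\fp_1$-primary component is $(\fa^{[p]}:\fa)S_{\fp_1}\cap S=(\fq_1^{[p]}:\fq_1)S_{\fp_1}\cap S=(\fq_1^{[p]}:\fq_1)$. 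For part (vii): $\fq^{[p]}$ is $\fp$-primary by (ii), and $\fq$ is irreducible if and only if the Artinian local ring $S_\fp/\fq S_\fp$ is Gorenstein; but $S_\fp/\fq^{[p]}S_\fp$ is obtained from $S_\fp/\fq S_\fp$ by flat local base change along the Frobenius of $S_\fp$, whose closed fibre $S_\fp/\fp^{[p]}S_\fp$ is an Artinian complete intersection, hence Gorenstein, so $S_\fp/\fq^{[p]}S_\fp$ is Gorenstein and $\fq^{[p]}$ is irreducible.

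I expect the main obstacle to be precisely the statements about primariness and irreducibility --- part (ii), and the uses of it in (vi) and (vii) --- since these rest on the flat-base-change behaviour of depth and of the Gorenstein property under flat local homomorphisms rather than on elementary ideal arithmetic; everything else is bookkeeping with flat extension of ideals.
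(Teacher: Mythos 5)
Your proof is correct, and the overall strategy — reduce everything to Kunz's theorem that Frobenius on a regular ring is (faithfully) flat and then apply flat base change — is the same as the paper's. However, you fill in genuine content where the paper is terse or delegates to references, so a comparison is worthwhile. For parts (i)--(iii) the paper simply says ``immediate consequences of faithful flatness''; you isolate the cleaner intermediate statement $\Ass_S(S/\fb^{[p^n]})=\Ass_S(S/\fb)$ and prove it via the depth formula for a flat local homomorphism with zero-dimensional closed fibre, which is indeed the standard way to make ``immediate'' rigorous (direct ideal-theoretic arguments for primariness of $\fp^{[p]}$ run into the issue of possible embedded primes). In (vi) you show $(\fq_1^{[p]}:\fq_1)$ is $\fp_1$-primary directly (via the annihilator of the module $\fq_1/\fq_1^{[p]}$), whereas the paper reaches the same conclusion by first exhibiting $(\fa^{[p]}:\fa)=\bigcap_i(\fq_i^{[p]}:\fa)$ as a primary decomposition and then localizing; your localization step $(\fa^{[p]}:\fa)S_{\fp_1}=(\fq_1^{[p]}:\fq_1)S_{\fp_1}$ matches the paper's in substance. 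The most significant divergence is (vii): the paper gives no proof, citing a remark in Sharp--Nossem that in turn rests on a result of Huneke and Sharp, while you supply a self-contained argument via the characterization of irreducibility of a $\fp$-primary ideal by Gorensteinness of $S_\fp/\fq S_\fp$, together with the ascent of the Gorenstein property along a flat local homomorphism with Gorenstein (here, complete-intersection) closed fibre. That buys independence from the cited references at the cost of invoking the flat-descent/ascent theory for Gorenstein rings. Your argument for the inclusion in (iv) (multiply by $c^{p-1}$) is exactly the computation the paper compresses into ``it is clear that,'' and your Krull-intersection argument in (vi) is a harmless alternative to the paper's Nakayama argument. In short: correct, same skeleton, with more of the connective tissue written out and one genuinely new self-contained proof for (vii).
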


\begin{proof} (i), (ii), (iii) These are immediate consequences of
the fact that the Frobenius homomorphism is a faithfully flat ring
homomorphism from $S$ to $S$.

(iv) That $(\fa :\fc)^{[p]} = (\fa ^{[p]}:\fc^{[p]})$ also follows
from the faithful flatness of the Frobenius ring homomorphism on
$S$. Furthermore, since $(\fa^{[p]} : \fa)(\fa : \fc)\fc \subseteq
\fa^{[p]}$, it is clear that $(\fa^{[p]} : \fa)(\fa : \fc)\fc^{[p]}
\subseteq \fa^{[p]}$, and therefore $$(\fa^{[p]} : \fa)(\fa : \fc)
\subseteq (\fa ^{[p]}:\fc^{[p]}) = (\fa :\fc)^{[p]}.$$

(v) This is immediate from part (iv), since there exists $s \in S$
such that $\fp = (\fa : s)$.

(vi) If it were the case that $\fa^{[p]} = \fa$, then we should have
$\fn\fa = \fa$, so that $\fa$ would be zero, by Nakayama's Lemma;
therefore $(\fa^{[p]} : \fa) \neq S$.

It follows from part (iii) that
$$
(\fa^{[p]} : \fa) =(\fq_1^{[p]} \cap \cdots \cap \fq_t^{[p]}:\fa) =
(\fq_1^{[p]}:\fa) \cap \cdots \cap (\fq_t^{[p]}:\fa),
$$
and from part (ii) that this becomes a primary decomposition of
$(\fa^{[p]} : \fa)$ after discard of terms equal to $S$, since, for
each $i=1, \ldots,t$, the ideal $(\fq_i^{[p]}:\fa)$ is either
$\sqrt{\fq_i}$-primary or equal to $S$.

For the remainder of this proof, for convenience, we denote $\fp_1$
by $\fp$ and $\fq_1$ by $\fq$. Let $\phantom{\fa}^e$ and
$\phantom{\fa}^c$ stand for extension and contraction of ideals with
respect to the natural ring homomorphism $S \lra S_{\fp}$. Then
$(\fq_i^{[p]}:\fa)^{ec} = S$ for all $i = 2, \ldots, t$, since $\fp$
is a minimal prime ideal of $\fa$; therefore
$$
\left(\fa^{[p]}:\fa\right)^{ec}= \left(\fq^{[p]}:\fa\right)^{ec} =
\left((\fq^{[p]})^e:\fa^e\right)^{c} =
\left((\fq^{[p]})^e:\fq^e\right)^{c} =
\left(\fq^{[p]}:\fq\right)^{ec} = \left(\fq^{[p]}:\fq\right),
$$
since $\left(\fq^{[p]}:\fq\right)$ is $\fp$-primary by part (ii) and
the first paragraph of the proof of this part (vi).

We can now conclude from the last two paragraphs that $\fp$ is a
minimal prime ideal of $(\fa^{[p]} : \fa)$ and the unique
$\fp$-primary component of $(\fa^{[p]} : \fa)$ is $(\fq^{[p]} :
\fq)$.

(vii) This was established in \cite[Remark 8.3(iii)]{SN} by use of a
result of Huneke and Sharp \cite[Proposition 1.5]{58}.
\end{proof}

\begin{prop}
\label{hrlr.8} We use the notation of\/ {\rm \ref{hrlr.1}}. Let $\fp
\in \Spec (S)$ and $u \in (\fp^{[p]} : \fp) \setminus \fp^{[p]}$.
Put the $S[x,f]$-module structure on $E$ for which $xe = uye$ for
all $e \in E$, so that $(0:_E\fp)$ is an $S[x,f]$-submodule of $E$,
by Lemma\/ {\rm \ref{hrlr.4}}.

Then $(\fp^{[p^n]} : u^{\nu_n}) = \fp$ for all $n \in \nn$, so that
$(0:_E\fp)$ is not $x$-torsion.
\end{prop}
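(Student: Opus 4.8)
The plan is to establish the ideal identity $(\fp^{[p^n]}:u^{\nu_n})=\fp$ for every $n\in\nn$ and to deduce the final assertion from it at once. By Proposition~\ref{hrlr.6} (applied with $\fp$ in the role of $\fa$), the identity says precisely that $\grann_{S[x,f]}(0:_E\fp)=\fp S[x,f]$, and since $\fp$ is proper this graded ideal contains no power of $x$; as $(0:_E\fp)$ is Artinian as an $S$-module, it cannot be $x$-torsion, because an $x$-torsion left $S[x,f]$-module that is Artinian over $S$ is annihilated by some power of $x$. So the whole proof comes down to the two inclusions in $(\fp^{[p^n]}:u^{\nu_n})=\fp$; the case $n=0$ is trivial, so fix $n\ge 1$. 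For the inclusion $\fp\subseteq(\fp^{[p^n]}:u^{\nu_n})$, equivalently $u^{\nu_n}\fp\subseteq\fp^{[p^n]}$, I would induct on $n$, the case $n=1$ being the hypothesis $u\fp\subseteq\fp^{[p]}$; for the step, $\nu_{n+1}=1+p\nu_n$ gives
$u^{\nu_{n+1}}\fp=u^{p\nu_n}(u\fp)\subseteq u^{p\nu_n}\fp^{[p]}=(u^{\nu_n}\fp)^{[p]}\subseteq(\fp^{[p^n]})^{[p]}=\fp^{[p^{n+1}]}$,
using the inductive hypothesis and the monotonicity of $(\,\cdot\,)^{[p]}$.

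The substantive part is the reverse inclusion $(\fp^{[p^n]}:u^{\nu_n})\subseteq\fp$. Since $\fp^{[p^n]}\subseteq\fp$, any $s$ with $su^{\nu_n}\in\fp^{[p^n]}$ has $su^{\nu_n}\in\fp$, so if $u\notin\fp$ we are done by primality; the essential case is $u\in\fp$. Here I would localise at $\fp$. Because $\fp^{[p^k]}$ is $\fp$-primary (Proposition~\ref{hrlr.7}(ii)) and formation of ideal quotients commutes with localisation over a Noetherian ring, one checks that $u/1\in\bigl((\fp S_\fp)^{[p]}:\fp S_\fp\bigr)\setminus(\fp S_\fp)^{[p]}$ and that, granted the inclusion $\fp\subseteq(\fp^{[p^n]}:u^{\nu_n})$ already proved, it suffices to show $u^{\nu_n}/1\notin(\fp S_\fp)^{[p^n]}$. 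Thus everything reduces to the following purely local statement: if $(S,\fm)$ is a regular local ring of characteristic $p$ and $u\in(\fm^{[p]}:\fm)\setminus\fm^{[p]}$, then $u^{\nu_n}\notin\fm^{[p^n]}$ for every $n$.

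To prove this, choose a regular system of parameters $t_1,\dots,t_d$ for $\fm$, so that $\fm^{[p^k]}=(t_1^{p^k},\dots,t_d^{p^k})$ and $t_1^{p^k},\dots,t_d^{p^k}$ is a regular sequence; hence $S/\fm^{[p]}$ is Artinian and Gorenstein (a complete intersection), with one-dimensional socle spanned by the image of $\delta:=t_1^{p-1}\cdots t_d^{p-1}$, so that $u=a+b\delta$ with $a\in\fm^{[p]}$ and $b$ a unit (if $b\in\fm$ then $b\delta\in\fm\delta\subseteq\fm^{[p]}$, forcing $u\in\fm^{[p]}$). I would then prove by induction on $n$ that $u^{\nu_n}\equiv b^{\nu_n}\,t_1^{p^n-1}\cdots t_d^{p^n-1}\pmod{\fm^{[p^n]}}$, the base case $n=1$ using $\nu_1=1$; the inductive step uses $\nu_{n+1}=1+p\nu_n$, the identity $(p-1)\nu_n=p^n-1$ from Notation~\ref{hrlr.1}(ii), the Frobenius identity $(A+B)^p=A^p+B^p$ (which sends the $\fm^{[p^n]}$-error of $u^{\nu_n}$ into $\fm^{[p^{n+1}]}$), and the observation that $a\cdot t_1^{p^{n+1}-p}\cdots t_d^{p^{n+1}-p}\in\fm^{[p^{n+1}]}$ because $a\in\fm^{[p]}$. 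Since $b^{\nu_n}$ is a unit and $t_1^{p^n-1}\cdots t_d^{p^n-1}\notin\fm^{[p^n]}$ (the standard criterion for a monomial to lie outside the ideal generated by a regular sequence), this yields $u^{\nu_n}\notin\fm^{[p^n]}$, as required.

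The step that I expect to require the most care is this last induction: the exponent bookkeeping must be done precisely, and at the inductive step one must verify that every term other than the leading one $b^{\nu_{n+1}}\prod t_i^{p^{n+1}-1}$ — namely those carrying a factor of $a$, or coming from the $p$-th power of the $\fm^{[p^n]}$-part of $u^{\nu_n}$ — genuinely lands in $\fm^{[p^{n+1}]}$. The passage to the local case is routine, but one should be careful that the hypothesis $u\notin\fp^{[p]}$ is preserved under localisation, which is exactly where the $\fp$-primariness of $\fp^{[p]}$ is needed.
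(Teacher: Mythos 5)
Your proof is correct, but it takes a genuinely different route from the paper's. The paper establishes the identity $(\fp^{[p^n]}:u^{\nu_n})=\fp$ entirely by ideal-theoretic bookkeeping: for $n=1$, the $\fp$-primariness of $\fp^{[p]}$ together with $u\notin\fp^{[p]}$ makes $(\fp^{[p]}:u)$ a $\fp$-primary ideal, which is squeezed between $\fp$ and $\fp$ since $u\fp\subseteq\fp^{[p]}$; for the inductive step, it applies the Frobenius functor to the equation $(\fp^{[p^n]}:u^{\nu_n})=\fp$ (via Proposition \ref{hrlr.7}(iv)) to obtain $(\fp^{[p^{n+1}]}:u^{\nu_np})=\fp^{[p]}$, and then colons out one more factor of $u$ and invokes the base case. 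That is four lines of calculation with no localisation and no structure theory of $S/\fm^{[p]}$. Your argument instead isolates the assertion $u^{\nu_n}\notin\fp^{[p^n]}$ (which does indeed yield the reverse inclusion once $\fp$-primariness of $(\fp^{[p^n]}:u^{\nu_n})$ is noted), passes to $S_\fp$ using the fact that $\fp^{[p^k]}$ is contracted from $S_\fp$, and then runs an explicit induction on the residue of $u^{\nu_n}$ modulo $\fm^{[p^n]}$ after decomposing $u=a+b\delta$ with $\delta=t_1^{p-1}\cdots t_d^{p-1}$ the socle generator of $S/\fm^{[p]}$ and $b$ a unit. The computation is correct — the term with $a$ absorbs into $\fm^{[p^{n+1}]}$ because each monomial of $a\in\fm^{[p]}$ donates a factor $t_i^p$ to the power $t_i^{p^{n+1}-p}$, and the leading term $b^{\nu_{n+1}}\prod t_i^{p^{n+1}-1}$ survives by the monomial criterion for regular sequences — but it requires knowing the Gorenstein/socle structure of $S/\fm^{[p]}$ and careful exponent bookkeeping, whereas the paper's proof only needs faithful flatness of Frobenius and primary decomposition. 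Your approach does pay a small dividend in that it makes the residue of $u^{\nu_n}$ in $S_\fp/(\fp S_\fp)^{[p^n]}$ completely explicit. One minor omission: your localisation step degenerates when $\fp=0$ (the residue field case, $d=0$), so you should treat $\fp=0$ separately as the paper does, though it is of course trivial there since $S$ is a domain and $u\ne 0$. The concluding deduction that $(0:_E\fp)$ is not $x$-torsion is the same in both proofs, via Proposition \ref{hrlr.6} and the Hartshorne--Speiser--Lyubeznik theorem.
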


\begin{proof} The claim that $(\fp^{[p^n]} : u^{\nu_n}) = \fp$ is obvious
when $\fp = 0$ or $n = 0$;
we therefore suppose that $\fp \neq 0$ and $n > 0$.

By Proposition \ref{hrlr.7}(ii), the Frobenius power $\fp^{[p]}$ is
$\fp$-primary; since $u \not\in \fp^{[p]}$, the ideal
$(\fp^{[p]}:u)$ is $\fp$-primary, so that $(\fp^{[p]}:u) \subseteq
\fp$. But $u\fp \subseteq \fp^{[p]}$, and so $$\fp \subseteq
(\fp^{[p]}:u) \subseteq \fp.$$ Therefore $(\fp^{[p]}:u) = \fp$ and
the desired result has been proved in the case where $n = 1$.

Now suppose, inductively, that $n \geq 1$ and we have shown that
$(\fp^{[p^n]} : u^{\nu_n}) = \fp$. We can use Proposition
\ref{hrlr.7}(iv) to deduce that
$$
\fp^{[p]} = (\fp^{[p^n]} : u^{\nu_n})^{[p]} = ((\fp^{[p^n]})^{[p]} :
(u^{\nu_n})^p) = (\fp^{[p^{n+1}]} : u^{\nu_np}).
$$
But $\nu_np + 1 = \nu_{n+1}$, and so
$$
(\fp^{[p^{n+1}]} : u^{\nu_{n+1}}) = (\fp^{[p^{n+1}]} :
u^{\nu_{n}p}u) = ((\fp^{[p^{n+1}]} : u^{\nu_np}):u) = (\fp^{[p]}: u)
= \fp
$$
by the case where $n = 1$. This completes the inductive step.

It now follows from Proposition \ref{hrlr.6} that
$\grann_{S[x,f]}(0:_E\fp) = \bigoplus_{n\in\nn}\fp x^n$. Note that
$(0:_E\fp)$ is Artinian as $S$-module. We now see from the
Hartshorne--Speiser--Lyubeznik Theorem (see G. Lyubeznik
\cite[Proposition 4.4]{Lyube97} and R. Hartshorne and R. Speiser
\cite[Proposition 1.11]{HarSpe77}) that if $(0:_E\fp)$ were
$x$-torsion, then it would be annihilated by $x^h$ for some $h \in
\N$. As this is not the case, $(0:_E\fp)$ is not $x$-torsion.
\end{proof}

\begin{prop}
\label{hrlr.9} Again use the notation of\/ {\rm \ref{hrlr.1}}.
Assume that $S$ is complete and the (complete) local ring $R =
S/\fa$ is Gorenstein. Then $(0:_E\fa) = E_R(R/\fm) \cong H^{\dim
R}_{\fm}(R)$, and so has a natural structure as a left
$R[x,f]$-module. By Lemma\/ {\rm \ref{hrlr.4a}}, there exists $u \in
(\fa^{[p]} : \fa)$ such that, under this natural structure, $xe =
uye$ for all $e \in (0:_E\fa)$. Then $(\fa^{[p]}:\fa) = \fa^{[p]} +
Su$.
\end{prop}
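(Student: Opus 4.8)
The inclusion $\fa^{[p]}+Su\subseteq(\fa^{[p]}:\fa)$ is immediate from $u\in(\fa^{[p]}:\fa)$, so the content is the reverse inclusion. The plan is to show that the finitely generated $R$-module $N:=(\fa^{[p]}:\fa)/\fa^{[p]}$ (this is an $R$-module, since $\fa(\fa^{[p]}:\fa)\subseteq\fa^{[p]}$ trivially) is cyclic --- in fact isomorphic to $R$ --- and that the image $\overline u$ of $u$ in $N$ generates it; then $N=R\overline u=(\fa^{[p]}+Su)/\fa^{[p]}$, which gives $(\fa^{[p]}:\fa)=\fa^{[p]}+Su$.

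For the first point, set $A:=S/\fa^{[p]}$. Since $R=S/\fa$ is Gorenstein, $\fa$ is a Gorenstein ideal of the regular local ring $S$, and, the Frobenius homomorphism on $S$ being flat, $\fa^{[p]}$ is again a Gorenstein ideal (this follows from the behaviour of $\Ext_S^{\bullet}(S/\fa,S)$ under the flat base change provided by $f$); thus $A$ is a complete local Gorenstein ring with $\dim A=\dim S/\sqrt{\fa^{[p]}}=\dim S/\sqrt{\fa}=\dim R=:d$. Since $\fa/\fa^{[p]}$ is a nilpotent ideal of $A$ with $A/(\fa/\fa^{[p]})=R$, we get $N=(0:_A(\fa/\fa^{[p]}))=\Hom_A(R,A)$; and, as $R$ has the same dimension $d$ as the Gorenstein ring $A$, local duality over $A$ identifies $\Hom_A(R,A)$ with $\left(H^d_{\fm}(R)\right)^{\vee}$ (Matlis dual over $R$), which by local duality over $R$ is $\omega_R$, isomorphic to $R$ because $R$ is Gorenstein. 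Hence $N\cong R$; this step is essentially standard commutative algebra about canonical modules and Frobenius powers of Gorenstein ideals.

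It remains to prove that $\overline u$ generates $N$; by Nakayama's Lemma it is enough to show that $u\notin\fn(\fa^{[p]}:\fa)+\fa^{[p]}$. Assume to the contrary that $u=\sum_{i=1}^{t}n_iv_i+b$ with $n_i\in\fn$, $v_i\in(\fa^{[p]}:\fa)$ and $b\in\fa^{[p]}$. For $e\in(0:_E\fa)=E_R(R/\fm)$ we then have $xe=uye=\sum_{i=1}^{t}n_i(v_iye)+bye$, where $bye=0$ by Lemma \ref{hrlr.3} (with $n=1$) and $v_iye\in(0:_E\fa)$ by Lemma \ref{hrlr.4}; since $\fn$ acts on $(0:_E\fa)$ through $\fm=\fn/\fa$, this forces $x\cdot(0:_E\fa)\subseteq\fm(0:_E\fa)$. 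Transporting along the isomorphism $(0:_E\fa)\cong H^d_{\fm}(R)$, the natural Frobenius action $F$ on $H^d_{\fm}(R)$ would then have image inside $\fm H^d_{\fm}(R)$, so the $R$-submodule of $H^d_{\fm}(R)$ generated by $F\!\left(H^d_{\fm}(R)\right)$ would be a proper submodule of the non-zero $\fm$-torsion module $H^d_{\fm}(R)$. But this is absurd: choosing a system of parameters $z_1,\dots,z_d$ of the Cohen--Macaulay ring $R$ and writing $H^d_{\fm}(R)=\varinjlim_t R/(z_1^t,\dots,z_d^t)$ with transition maps multiplication by $z_1\cdots z_d$, the action $F$ is induced, on the $t$th term, by $\overline r\mapsto\overline{r^{p}}\colon R/(z_1^t,\dots,z_d^t)\to R/(z_1^{pt},\dots,z_d^{pt})$, so $F\!\left(H^d_{\fm}(R)\right)$ contains the canonical image in $H^d_{\fm}(R)$ of $1\in R/(z_1^{pt},\dots,z_d^{pt})$ for every $t\in\N$, and the $R$-submodule generated by these images is the union of the images of the maps $R/(z_1^{pt},\dots,z_d^{pt})\lra H^d_{\fm}(R)$, which is all of $H^d_{\fm}(R)$ (when $d=0$, read $H^0_{\fm}(R)=R$ and use that $F$ fixes $1$). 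This contradiction shows $u\notin\fn(\fa^{[p]}:\fa)+\fa^{[p]}$, so $\overline u$ generates $N$ and $(\fa^{[p]}:\fa)=\fa^{[p]}+Su$.

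I expect the last paragraph to be the main obstacle: the element $u$ is furnished only abstractly by Lemma \ref{hrlr.4a}, and the crux is to relate it to an explicit model of the natural Frobenius action on the top local cohomology module and to use that this action generates $H^d_{\fm}(R)$ over $R$, in order to exclude $u\in\fn(\fa^{[p]}:\fa)+\fa^{[p]}$.
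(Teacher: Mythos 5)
Your first two paragraphs are correct and take a route genuinely different from the paper's: by observing that $A := S/\fa^{[p]}$ is Gorenstein (flatness of Frobenius) and applying local duality over $A$ and then over $R$, you show directly that $N := (\fa^{[p]}:\fa)/\fa^{[p]} \cong \omega_R \cong R$ is cyclic. This reverses the logic of the paper, which instead \emph{deduces} cyclicity of $N$ (Corollary \ref{hrlr.10}) from Proposition \ref{hrlr.9}; the paper's short proof of \ref{hrlr.9} hinges on Lyubeznik--Smith \cite[Example 3.7]{LyuSmi01}, which says that every Frobenius structure on $E_R(R/\fm)$ is a `multiple' of the natural one coming from $H^d_\fm(R)$.

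The third paragraph, however --- the step you yourself flag as the obstacle --- contains a genuine gap, and it is fatal as written when $d := \dim R > 0$. You argue that $u \in \fn(\fa^{[p]}:\fa) + \fa^{[p]}$ would force $F(H) \subseteq \fm H$ for $H := H^d_\fm(R)$, and that this would make the $R$-submodule $R\cdot F(H)$ a \emph{proper} submodule of $H$, contradicting your (correct) computation that $R\cdot F(H) = H$. But $H \cong E_R(R/\fm)$ satisfies $\fm H = H$ as soon as $d>0$: Nakayama's Lemma does not apply to this Artinian, non-finitely-generated module, and indeed $\Hom_R(H/\fm H, E_R(R/\fm)) \cong \Hom_R(R/\fm, \Hom_R(E,E)) \cong (0:_R\fm) = 0$ (as $R$ is complete Gorenstein of depth $d>0$), forcing $H/\fm H = 0$. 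So $R\cdot F(H)\subseteq\fm H = H$ is no contradiction. Worse, the containment $F(H)\subseteq\fm H$ is not really a consequence of your hypothesis on $u$ --- it holds \emph{unconditionally} when $d>0$: in your explicit description $H = \varinjlim_t R/(z_1^t,\dots,z_d^t)$, with $\eta_s$ the image of $1\in R/(z_1^s,\dots,z_d^s)$, one has $F(\eta_t) = \eta_{pt} = (z_1\cdots z_d)\eta_{pt+1} \in \fm H$, and hence $F(r\eta_t)=r^p\eta_{pt}\in\fm H$ for all $r$ and $t$. The hypothesis you try to contradict is therefore vacuous. What is actually needed to finish along your lines is that the class of the natural Frobenius generates $N \cong \Hom_R(Rx\otimes_R E,E)$, which is equivalent to the natural $R$-linear map $Rx\otimes_R H^d_\fm(R)\lra H^d_\fm(R)$ being an isomorphism --- precisely the Lyubeznik--Smith input the paper uses --- and I do not see how to extract this from a Nakayama-style argument on $H^d_\fm(R)$.
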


\begin{proof} Let $z$ be a new variable. It follows from Lyubeznik--Smith
\cite[Example 3.7]{LyuSmi01} that, since $R$ is complete and
Gorenstein, each $R[z,f]$-module structure on $(0:_E\fa)$ is such
that there exists $w' \in S$ for which $ze = \overline{w'}xe$ for
all $e \in (0:_E\fa)$.

Let $v \in (\fa^{[p]}:\fa)$. By Lemma \ref{hrlr.4}, there is a left
$R[z,f]$-module structure on $(0:_E\fa)$ for which $ze = vye$ for
all $e \in (0:_E\fa)$. By the first paragraph of this proof, there
exists $w \in S$ for which $ze = \overline{w}xe$ for all $e \in
(0:_E\fa)$. Therefore $vye = ze = wxe = wuye$ for all $e \in
(0:_E\fa)$, and $v - wu$ annihilates $Sy(0:_E\fa)$, so that $v - wu
\in \fa^{[p]}$ by Lemma \ref{hrlr.3}. Therefore $(\fa^{[p]}:\fa)
\subseteq \fa^{[p]} + Su$; the reverse inclusion is obvious.
\end{proof}

\begin{cor}
\label{hrlr.10} Once again, we use the notation of\/ {\rm
\ref{hrlr.1}}. The minimum number of generators of the $S$-module
$(\fa^{[p]}:\fa)/\fa^{[p]}$ is equal to the minimum number of
generators of the $\widehat{S}$-module
$\left((\fa\widehat{S})^{[p]}:\fa\widehat{S}\right)/(\fa\widehat{S})^{[p]}$.

Consequently, if the local ring $R = S/\fa$ is Gorenstein, then
$(\fa^{[p]}:\fa)/\fa^{[p]}$ is a cyclic $S$-module.
\end{cor}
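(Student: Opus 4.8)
The plan is to deduce both assertions by passing to the completion and invoking Proposition \ref{hrlr.9}. The first step is to identify $\bigl((\fa\widehat{S})^{[p]}:\fa\widehat{S}\bigr)/(\fa\widehat{S})^{[p]}$ with the completion of the $S$-module $(\fa^{[p]}:\fa)/\fa^{[p]}$. Since $\fa$ is finitely generated and $\widehat{S}$ is flat over $S$, the Frobenius power commutes with the base change, so $\fa^{[p]}\widehat{S} = (\fa\widehat{S})^{[p]}$ and $\fa\widehat{S} = \widehat{\fa}$. Because $S$ is Noetherian, $S/\fa$ is finitely presented, so flatness of $\widehat{S}$ over $S$ gives a natural isomorphism
$$
\Hom_S(S/\fa,\, S/\fa^{[p]})\otimes_S\widehat{S}\ \cong\ \Hom_{\widehat{S}}\bigl(\widehat{S}/\fa\widehat{S},\, \widehat{S}/(\fa\widehat{S})^{[p]}\bigr).
$$
The left-hand side is the completion of $\Hom_S(S/\fa,S/\fa^{[p]})\cong(\fa^{[p]}:\fa)/\fa^{[p]}$, while the right-hand side is $\bigl((\fa\widehat{S})^{[p]}:\fa\widehat{S}\bigr)/(\fa\widehat{S})^{[p]}$; this proves the identification.

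Next I would invoke the standard fact that for a finitely generated module $M$ over a Noetherian local ring $(S,\fn)$ the minimum number of generators of $M$ equals $\dim_{S/\fn}(M/\fn M)$, and that this quantity is unchanged on completing, because $\widehat{M}/\fn\widehat{M}\cong M/\fn M$ and $S/\fn\cong\widehat{S}/\fn\widehat{S}$. Applying this to $M=(\fa^{[p]}:\fa)/\fa^{[p]}$, together with the first step, yields the first assertion of the corollary.

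For the consequence, assume $R=S/\fa$ is Gorenstein. Then $\widehat{R}=\widehat{S}/\fa\widehat{S}$ is Gorenstein, $\widehat{S}$ is a complete regular local ring of characteristic $p$, and $\fa\widehat{S}$ is a proper non-zero ideal of $\widehat{S}$ (non-zero by faithful flatness, proper because $\fa\widehat{S}\subseteq\fn\widehat{S}$). Proposition \ref{hrlr.9}, applied with $\widehat{S}$ and $\fa\widehat{S}$ in place of $S$ and $\fa$, then supplies $u\in\widehat{S}$ with $\bigl((\fa\widehat{S})^{[p]}:\fa\widehat{S}\bigr)=(\fa\widehat{S})^{[p]}+\widehat{S}u$, so that $\bigl((\fa\widehat{S})^{[p]}:\fa\widehat{S}\bigr)/(\fa\widehat{S})^{[p]}$ is a cyclic $\widehat{S}$-module; by the first assertion, $(\fa^{[p]}:\fa)/\fa^{[p]}$ is therefore generated by a single element, as claimed.

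I do not anticipate a serious obstacle: the argument is essentially bookkeeping about flat base change (its commuting with Frobenius powers and with $\Hom$ out of a finitely presented module) together with the invariance of the minimal number of generators under completion. The one point that deserves care is the first step, namely confirming that the module $\bigl((\fa\widehat{S})^{[p]}:\fa\widehat{S}\bigr)/(\fa\widehat{S})^{[p]}$ really is the completion of $(\fa^{[p]}:\fa)/\fa^{[p]}$, which is why I would carry out the identification via the $\Hom$–flat-base-change isomorphism rather than by manipulating colon ideals directly.
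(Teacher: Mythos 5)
Your proof is correct and follows essentially the same route as the paper: both pass to the completion (using flatness of $\widehat{S}$ over $S$ to identify the relevant module over $\widehat{S}$ with the base change of the one over $S$, so that minimal numbers of generators agree), and both then invoke Proposition \ref{hrlr.9} for the complete Gorenstein case. Your use of the $\Hom_S(S/\fa,S/\fa^{[p]})$ identification and flat base change for $\Hom$ is a slightly more structured way of carrying out the ``straightforward to check'' step that the paper handles by direct manipulation of colon ideals, but the substance is the same.
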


\begin{proof} Let $t$ denote the minimal number of generators of the $S$-module
$(\fa^{[p]}:\fa)/\fa^{[p]}$; then $t$ is equal to the dimension of
$(\fa^{[p]}:\fa)/\left(\fn(\fa^{[p]}:\fa) + \fa^{[p]}\right)$ as a
vector space over $S/\fn$. As $\widehat{S}$ is faithfully flat over
$S$, it is straightforward to check that this dimension is equal to
the dimension, as a vector space over $\widehat{S}/\fn\widehat{S}$,
of
$$\left((\fa\widehat{S})^{[p]}:\fa\widehat{S}\right)/\left(\fn\widehat{S}((\fa\widehat{S})^{[p]}:\fa\widehat{S})
+ (\fa\widehat{S})^{[p]}\right).$$ Therefore the minimum number of
generators of the $\widehat{S}$-module
$\left((\fa\widehat{S})^{[p]}:\fa\widehat{S}\right)/(\fa\widehat{S})^{[p]}$
is also equal to $t$.

Note that $\widehat{R} = \widehat{S}/\fa \widehat{S}$ is Gorenstein
if and only if $R$ is Gorenstein. Therefore, in order to prove the
claim in the final paragraph of the corollary, it is sufficient for
us to do so in the case where $S$ is complete, and, in that case,
the claim is immediate from Proposition \ref{hrlr.9}.
\end{proof}

\begin{prop}
\label{hrlr.11} Use the notation of\/ {\rm \ref{hrlr.1}}. Suppose
that $u_1, u_2 \in S$ are such that
$$
(\fa^{[p]}:\fa) = \fa^{[p]} + Su_1 =  \fa^{[p]} + Su_2.
$$
Bearing in mind Lemma\/ {\rm \ref{hrlr.4}}, let $D_i$, for $i =
1,2$, denote $(0:_E\fa)$ endowed with the left $R[x,f]$-module
structure that (extends its $R$-module structure and) is such that
$xe = u_iye$ for all $e \in (0:_E\fa)$. Then a subset of\/
$(0:_E\fa)$ is an $R[x,f]$-submodule of $D_1$ if and only if it is
an $R[x,f]$-submodule of $D_2$.
\end{prop}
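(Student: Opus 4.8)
The plan is to reduce the statement to a single symmetric assertion about $R$-submodules. Observe first that $D_1$ and $D_2$ have the same underlying $R$-module, namely $(0:_E\fa)$ with the structure described in the last paragraph of Lemma \ref{hrlr.4}; so the phrase ``$R$-submodule'' is unambiguous. Next, by the very construction of $D_i$ (see \ref{hrlr.1} and Lemma \ref{hrlr.4}), the $x$-action on $D_i$ is $e \mapsto u_i y e$, where the products $u_i y e$ are formed using the ambient structure of $E$ as a left $S[y,f]$-module and automatically land back in $(0:_E\fa)$ because $u_i \in (\fa^{[p]}:\fa)$. Hence an $R$-submodule $N$ of $(0:_E\fa)$ is an $R[x,f]$-submodule of $D_i$ precisely when $u_i y N \subseteq N$. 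So the whole proposition amounts to showing, for every $R$-submodule $N$ of $(0:_E\fa)$, that $u_1 y N \subseteq N$ if and only if $u_2 y N \subseteq N$; by symmetry it suffices to prove one implication.

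For that, I would use the hypothesis $\fa^{[p]} + Su_1 = \fa^{[p]} + Su_2$ to write $u_1 = a_1 + s_1 u_2$ with $a_1 \in \fa^{[p]}$ and $s_1 \in S$. The key input is Lemma \ref{hrlr.3} with $n = 1$, which gives $\fa^{[p]} = (0:_S Sy(0:_E\fa))$; in particular $a_1(y e) = 0$ for every $e \in (0:_E\fa)$. Consequently, for $e \in (0:_E\fa)$,
\[
u_1 y e = (a_1 + s_1 u_2) y e = a_1 y e + s_1 u_2 y e = s_1 (u_2 y e).
\]
Now if $N$ is an $R[x,f]$-submodule of $D_2$, i.e.\ $u_2 y N \subseteq N$, then for $e \in N$ the element $u_2 y e$ lies in $N$, and since $\fa$ annihilates $(0:_E\fa)$ the $S$-action on $(0:_E\fa)$ factors through $R = S/\fa$, so $s_1(u_2 y e) = \overline{s_1}(u_2 y e) \in N$ because $N$ is an $R$-submodule. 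Thus $u_1 y e \in N$ for all $e \in N$, i.e.\ $N$ is an $R[x,f]$-submodule of $D_1$.

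The reverse implication follows by the identical argument after interchanging the roles of $u_1$ and $u_2$ (writing $u_2 = a_2 + s_2 u_1$ with $a_2 \in \fa^{[p]}$). I do not expect any genuine obstacle: the only points needing a little care are invoking the annihilator equality of Lemma \ref{hrlr.3} at the correct Frobenius level ($n = 1$) and keeping track of the fact that multiplication by $s \in S$ on $(0:_E\fa)$ coincides with multiplication by $\overline{s} \in R$ — both routine. Note also that, unlike several earlier results in this section, this proposition needs neither a completeness nor a Gorenstein hypothesis.
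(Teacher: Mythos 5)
Your proof is correct and follows essentially the same route as the paper's: write $u_1 = a_1 + s_1 u_2$ with $a_1 \in \fa^{[p]}$, show $a_1\,ye = 0$ on $(0:_E\fa)$, conclude $u_1 y e = s_1(u_2 y e)$, and argue by symmetry. The only cosmetic difference is that you cite Lemma \ref{hrlr.3} (with $n=1$) to kill $a_1\,ye$, whereas the paper expands $a_1 = \sum_i s_i a_i^p$ and computes $\sum_i s_i a_i^p\,ye = \sum_i s_i\, y\, a_i e = 0$ directly; the two are equivalent, and indeed the paper's computation is essentially the proof of that special case of the lemma.
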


\begin{proof} There exist $t \in \fa^{[p]}$ and $s \in S$ such that
$u_1 = t + su_2$, and $a_1, \ldots, a_h \in \fa$ and $s_1, \ldots,
s_h \in S$ such that $ t = {\textstyle \sum_{i=1}^h s_ia_i^p}$;
therefore, for $e \in (0:_E\fa)$, we have
$$
tye = \left({\textstyle \sum_{i=1}^h s_ia_i^p}\right)ye =
{\textstyle \sum_{i=1}^h s_ia_i^pye} = {\textstyle \sum_{i=1}^h
s_iya_ie} = 0.
$$
Hence $u_1ye = su_2ye$ for all $e \in (0:_E\fa)$, so that every
$R[x,f]$-submodule of $D_2$ is an $R[x,f]$-submodule of $D_1$. We
can similarly prove that every $R[x,f]$-submodule of $D_1$ is an
$R[x,f]$-submodule of $D_2$.
\end{proof}

\begin{cor}
\label{hrlr.12} Again use the notation of\/ {\rm \ref{hrlr.1}}, so
that $(R,\fm)$ is local and equal to the homomorphic image $S/\fa$
of the regular local ring $S$.

Suppose that $u \in S$ is such that $ (\fa^{[p]}:\fa) = \fa^{[p]} +
Su.$ Bearing in mind Lemma\/ {\rm \ref{hrlr.4}}, let $D$ denote
$(0:_E\fa)$ endowed with the left $R[x,f]$-module structure that
(extends its $R$-module structure and) is such that $xe = uye$ for
all $e \in (0:_E\fa)$. If $R$ is regular, then $D$ is a simple
$R[x,f]$-module.
\end{cor}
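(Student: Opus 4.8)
The plan is to exploit the dictionary between $R[x,f]$-submodules of $D$ and $D$-special $R$-ideals provided by Proposition \ref{ga1.11}, once we know that $D$ is $x$-torsion-free. First I would observe that $D = (0:_E\fa) = E_R(R/\fm)$ as an $R$-module, and that since $R = S/\fa$ is regular the ideal $\fa$ must be prime (it is a prime ideal $\fp$ of $S$ with $S/\fp$ regular). Applying Fedder's criterion (Theorem \ref{mr.1}), regularity of $R$ forces $R$ itself to be $F$-pure, so $(\fa^{[p]}:\fa)\not\subseteq\fn^{[p]}$; hence the element $u$ with $(\fa^{[p]}:\fa)=\fa^{[p]}+Su$ satisfies $u\notin\fn^{[p]}\supseteq\fa^{[p]}$, and in particular $u\notin\fa^{[p]}$. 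Thus $u\in(\fa^{[p]}:\fa)\setminus\fa^{[p]} = (\fp^{[p]}:\fp)\setminus\fp^{[p]}$, and Proposition \ref{hrlr.8} applies: we get $(\fa^{[p^n]}:u^{\nu_n})=\fa$ for all $n\in\nn$, whence by Proposition \ref{hrlr.6}
\[
\grann_{S[x,f]}D = \bigoplus_{n\in\nn}\bigl(\fa^{[p^n]}:u^{\nu_n}\bigr)x^n = \bigoplus_{n\in\nn}\fa x^n = \fa S[x,f].
\]

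Next I would transfer this to $R[x,f]$. The graded annihilator of $D$ as a left $R[x,f]$-module is the image of $\grann_{S[x,f]}D$ under the natural surjection $S[x,f]\lra R[x,f]$, which is $\bigoplus_{n\in\nn}(\fa/\fa)x^n = 0$; that is, $\grann_{R[x,f]}D = 0$. More to the point, $D$ is $x$-torsion-free: by Proposition \ref{hrlr.8} it is not $x$-torsion, but I want the stronger statement. Here I would argue that $\Gamma_x(D)$ is an $R[x,f]$-submodule of $D$, hence (by Matlis duality over the complete ring $\widehat S$, or directly since $D$ is Artinian with an Artinian $R[x,f]$-module structure) corresponds to some ideal; since $\grann_{S[x,f]}D = \fa S[x,f]$ has the form $\fb S[x,f]$ with $\fb=\fa$ radical, \cite[Lemma 1.9]{ga}-type reasoning gives that $D$ is $x$-torsion-free. (Alternatively, and more cleanly: $u\notin\fn^{[p]}$ together with $(\fp^{[p^n]}:u^{\nu_n})=\fp$ for all $n$ shows directly that no nonzero element of $D$ is killed by any power of $x$, because $x^n e = u^{\nu_n} y^n e$ and $u^{\nu_n}\notin\fp$ acts injectively on the cofaithful module $D$ modulo... ) — I would pin down whichever of these is shortest; the point is that $D$ is an $x$-torsion-free left $R[x,f]$-module.

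Now $D$ is $x$-torsion-free, so Proposition \ref{ga1.11} gives an order-reversing bijection $\Delta:\mathcal{A}(D)\lra\mathcal{I}(D)$ with $\Delta(N) = (0:_RN)$. Every $R[x,f]$-submodule $N$ of $D$ with $N\neq D$ and $N\neq 0$ would, via passing to special annihilator submodules, produce a $D$-special $R$-ideal $\fc$ with $0\subsetneq\fc\subsetneq R$ (recall $\mathcal{I}(D)$ consists of radical ideals, $(0:_RD)=0$ since $D$ is a faithful $R$-module, being an injective envelope over the local ring $R$). But $R$ is regular, hence a domain, and I claim $\mathcal{I}(D)=\{0\}$: indeed a $D$-special $R$-ideal $\fc$ is the contraction of $\grann_{R[x,f]}N$ for some submodule $N$, and lifting to $S[x,f]$, $\grann_{S[x,f]}N'$ (for the preimage $N'$ under $S\lra R$) is of the form $\bigoplus(\fa^{[p^n]}:u^{\nu_n}v)x^n$ for a suitable... — the cleanest route is: $D$ has no proper nonzero $R[x,f]$-submodules because any such would give, under Matlis duality over $\widehat{S}$, a quotient $S/\fk$ of $S/\fa$ stable under the induced Frobenius-type action forcing $\fk^{[p]}$-type containments, and combined with $(\fp^{[p^n]}:u^{\nu_n})=\fp$ this collapses $\fk$ to $\fp$ or to... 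I would complete this by showing that $\mathcal{A}(D)$ has exactly two elements, $0$ and $D$, equivalently $\mathcal{I}(D)=\{0\}$ only if we also rule out special annihilator submodules other than $D$ and $0$. The main obstacle is precisely this last point: ruling out nontrivial special annihilator submodules of $D$. I expect to handle it by combining the formula $\grann_{S[x,f]}(0:_E\fk)=\bigoplus(\fk^{[p^n]}:w^{\nu_n})x^n$ (Proposition \ref{hrlr.6}, applied with $\fk$ in place of $\fa$ and the same $u$) with the fact that a special annihilator submodule of $D$ has the form $\ann_D(\fb R[x,f])=(0:_E\fa+\fb)\cap\cdots$; regularity of $S$ and primality of $\fa$ should force the only possibilities to be $0$ and $D$. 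Once $\mathcal{A}(D)=\{0,D\}$, the bijection of Proposition \ref{ga1.11} shows $D$ has no proper nonzero $R[x,f]$-submodule other than those, i.e. $D$ is simple as a left $R[x,f]$-module, completing the proof.
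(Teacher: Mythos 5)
Your proposal has a genuine logical gap at its final, crucial step, and in addition several of its intermediate steps are left explicitly unfinished; the paper's proof takes a completely different route.

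The fatal gap is the claim that $\mathcal{A}(D) = \{0,D\}$ would force $D$ to be simple as a left $R[x,f]$-module. Recall from Notation \ref{nt.1z} that $\mathcal{A}(D)$ is the set of \emph{special annihilator submodules}, that is, those of the form $\ann_D(\fB)$ for a graded two-sided ideal $\fB$ of $R[x,f]$; it is \emph{not} the set of all $R[x,f]$-submodules. Proposition \ref{ga1.11} (and Lemma \ref{ga1.7(v)}) gives a bijection between $\mathcal{A}(D)$ and $\mathcal{G}(D)$ (equivalently, $\mathcal{I}(D)$), but this controls only the special annihilator submodules. Indeed, once you show $\grann_{R[x,f]}D = 0$, you would have $\mathcal{G}(D) = \{0, R[x,f]\}$; but for an arbitrary nonzero $R[x,f]$-submodule $N$ of $D$ the only conclusion this supplies is that $\grann_{R[x,f]}N = 0$ and hence $N \subseteq \ann_D(0) = D$, which says nothing. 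There is no reason in general why a proper nonzero $R[x,f]$-submodule of $D$ should be a special annihilator submodule, so the argument does not close. You yourself flag this difficulty (``the main obstacle is precisely this last point'') and the paragraph that follows is a sketch of possibilities rather than an argument; Proposition \ref{hrlr.6} applied to various ideals $\fk$ would only detect those submodules of the form $(0:_E\fk)$, which, without an a priori Matlis-duality classification of all $R[x,f]$-submodules and a reason that they are all of that shape, does not give simplicity.

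The paper's proof is shorter and structurally different. It first reduces to the complete case: the $R[x,f]$-module structure on $E_R(R/\fm)$ extends uniquely to an $\widehat{R}[x,f]$-module structure, a subset is an $R$-submodule if and only if it is an $\widehat{R}$-submodule, and the hypothesis $(\fa^{[p]}:\fa)=\fa^{[p]}+Su$ passes to $\widehat{S}$. Over the complete regular ring $\widehat{R}$ one has $(0:_E\fa\widehat{S}) \cong H^d_{\fm\widehat{R}}(\widehat{R})$ with its natural $\widehat{R}[x,f]$-structure; Proposition \ref{hrlr.9} shows that natural structure is described by some generator $\widehat{v}$ of $((\fa\widehat{S})^{[p]}:\fa\widehat{S})$ modulo $(\fa\widehat{S})^{[p]}$, so Proposition \ref{hrlr.11} identifies the lattice of $R[x,f]$-submodules of $D$ with that of $H^d_{\fm\widehat{R}}(\widehat{R})$. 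The simplicity of the latter is then a citation of K.~E.~Smith \cite[Theorem 2.6]{Smi97} for the complete, excellent, regular case. In other words, the paper imports the hard simplicity statement from Smith's theorem rather than attempting to deduce it from the graded-annihilator machinery, which by itself is not strong enough to rule out non-special submodules.
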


\begin{proof} Let $d$ denote $\dim R$. Since $E_R(R/\fm)$ has a
natural structure as a module over the completion $\widehat{R}$ of
$R$, the specified structure as left $R[x,f]$-module on $E_R(R/\fm)$
induces, in a unique way, a structure as left
$\widehat{R}[x,f]$-module on it extending its $R[x,f]$-module
structure. Note that $\widehat{R} = \widehat{S}/\fa \widehat{S}$,
that $\left((\fa \widehat{S})^{[p]}:_{\widehat{S}}\fa
\widehat{S}\right) = (\fa \widehat{S})^{[p]} + \widehat{S}u$, and
that $$(0:_E\fa) = (0:_E\fa\widehat{S}) =
E_{\widehat{R}}(\widehat{R}/\fm\widehat{R}).$$

Because $\widehat{R}$ is regular, $(0:_E\fa\widehat{S}) \cong
H^{d}_{\fm\widehat{R}}(\widehat{R})$ as $\widehat{R}$-modules. The
natural left $\widehat{R}[x,f]$-module structure on this can, by
Lemma \ref{hrlr.4a}, be described by some $\widehat{v} \in
\left((\fa \widehat{S})^{[p]}:_{\widehat{S}}\fa \widehat{S}\right)$,
in the sense that, under this natural structure, $xe =
\widehat{v}ye$ for all $e \in (0:_E\fa\widehat{R})$. By Proposition
\ref{hrlr.9}, $\left((\fa \widehat{S})^{[p]}:_{\widehat{S}}\fa
\widehat{S}\right) = (\fa \widehat{S})^{[p]} +
\widehat{S}\widehat{v}$.

Since a subset of $(0:_E\fa\widehat{S})$ is an $R$-submodule if and
only if it is an $\widehat{R}$-submodule, it follows from
Proposition \ref{hrlr.11} that $D$ is simple as a left
$R[x,f]$-module if and only if $H^{d}_{\fm\widehat{R}}(\widehat{R})$
is simple as a left $\widehat{R}[x,f]$-module under its natural
structure. But, as $\widehat{R}$ is excellent (as it is complete)
and regular, this is immediate from a result of K. E. Smith
\cite[Theorem 2.6]{Smi97}.
\end{proof}

\section{\sc Applications to $F$-pure local rings}\label{fp}

In \cite[Theorem 3.5]{gatcti}, the author showed that, if $(R,\fm)$
is local and the $R$-module structure on $E_R(R/\fm)$ can be
extended to an $x$-torsion-free left $R[x,f]$-module structure, then
$R$ is $F$-pure. The first two results in this section establish the
converse.

\begin{prop}
\label{fp.0} Use the notation of\/ {\rm \ref{hrlr.1}}. Suppose that
$R$ is $F$-pure, so that, by Fedder's Theorem\/ {\rm \ref{mr.1}},
there exists $u \in (\fa^{[p]}:\fa) \setminus \fn^{[p]}$. Put the
left $S[x,f]$-module structure on $E$ for which $xe = uye$ for all
$e \in E$, so that $(0:_E\fa)= E_R(R/\fm)$ is, by Lemma\/ {\rm
\ref{hrlr.4}}, an $S[x,f]$-submodule of $E$ which inherits a left
$R[x,f]$-module structure (that extends its $R$-module structure).

With this left $R[x,f]$-module structure, $E_R(R/\fm)$ is
$x$-torsion-free.
\end{prop}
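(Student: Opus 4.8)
The goal is to show that the left $R[x,f]$-module $(0:_E\fa)$ (with $xe=uye$) has no nonzero $x$-torsion, where $u\in(\fa^{[p]}:\fa)\setminus\fn^{[p]}$ exists by Fedder's criterion. The natural strategy is to compute the graded annihilator of $(0:_E\fa)$ using Proposition \ref{hrlr.6}, which gives
\[
\grann_{S[x,f]}(0:_E\fa)=\bigoplus_{n\in\nn}\bigl(\fa^{[p^n]}:u^{\nu_n}\bigr)x^n,
\]
and then to invoke the Hartshorne--Speiser--Lyubeznik Theorem exactly as in the proof of Proposition \ref{hrlr.8}: since $(0:_E\fa)$ is Artinian as an $S$-module, it is $x$-torsion-free as soon as one rules out the possibility that it is annihilated by some power $x^h$, i.e. as soon as $\fb_h=(\fa^{[p^h]}:u^{\nu_h})\neq S$ for every $h$. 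So everything reduces to the purely ideal-theoretic claim that $u^{\nu_n}\notin\fa^{[p^n]}$ for all $n\in\N$.

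**Key steps.** First I would reduce to the complete case. By Corollary \ref{hrlr.10} (or directly, since faithful flatness of $S\to\widehat S$ preserves the relevant colon and Frobenius-power operations and $\fn^{[p]}\widehat S=(\fn\widehat S)^{[p]}$), the element $u$ stays outside $(\fn\widehat S)^{[p]}$ and lies in $((\fa\widehat S)^{[p]}:\fa\widehat S)$, and the $x$-torsion-freeness of $(0:_E\fa)$ over $R[x,f]$ is equivalent to that of $(0:_E\fa\widehat S)=E_{\widehat R}(\widehat R/\fm\widehat R)$ over $\widehat R[x,f]$; so I may assume $S$ is complete. Second, I would fix a minimal primary decomposition $\fa=\fq_1\cap\cdots\cap\fq_t$ with $\fp_i:=\sqrt{\fq_i}$, and use Proposition \ref{hrlr.7}(iii) to write $\fa^{[p^n]}=\fq_1^{[p^n]}\cap\cdots\cap\fq_t^{[p^n]}$. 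Thus it suffices to find one associated prime, say $\fp_1$, a minimal one, for which $u^{\nu_n}\notin\fq_1^{[p^n]}$. Third, here is where I expect the crux to lie: I want to locate $u$ relative to $\fp_1^{[p]}$. Since $u\notin\fn^{[p]}$ and $\fn^{[p]}=(\fn^{[p]}:\fn)$ contains $\fp_1^{[p]}$... no — that inclusion goes the wrong way; instead I should argue that $u\in(\fa^{[p]}:\fa)\subseteq(\fp_1^{[p]}:\fp_1)$ by Proposition \ref{hrlr.7}(v), and I need to arrange $u\notin\fp_1^{[p]}$. This need not hold for an arbitrary choice of $u$ outside $\fn^{[p]}$, but because $S$ is regular local, $\fn^{[p]}=(x_1^p,\dots,x_d^p)$ where $x_1,\dots,x_d$ is a regular system of parameters, and one checks that $(\fa^{[p]}:\fa)\not\subseteq\fn^{[p]}$ forces $(\fa^{[p]}:\fa)\not\subseteq\fp_i^{[p]}$ for at least one minimal $\fp_i$ — indeed $\fn^{[p]}$ is the Frobenius power of the maximal ideal, so $u\notin\fn^{[p]}=\fm^{[p]}\cdot$(lift) means $u$ has a nonzero image modulo $\fn^{[p]}$, and by the structure of Frobenius powers of the $\fp_i^{[p]}$ (they are $\fp_i$-primary by \ref{hrlr.7}(ii), and their intersection sits inside $\fn^{[p]}$ only if... ) one of the $\fp_i^{[p]}$ must fail to contain $u$. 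Replacing $\fa$ by the corresponding $\fq_i$ and then passing to $S_{\fp_i}$, I can apply Proposition \ref{hrlr.8} verbatim with $\fp=\fp_i$: it yields $(\fp_i^{[p^n]}:u^{\nu_n})=\fp_i$ over $S_{\fp_i}$, hence $u^{\nu_n}\notin\fp_i^{[p^n]}S_{\fp_i}$, hence $u^{\nu_n}\notin\fq_i^{[p^n]}\supseteq\fa^{[p^n]}$... wait, $\fq_i^{[p^n]}\subseteq\fp_i^{[p^n]}$, so $u^{\nu_n}\notin\fp_i^{[p^n]}$ already gives $u^{\nu_n}\notin\fq_i^{[p^n]}\supseteq\fa^{[p^n]}$, which is what we want.

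**Main obstacle.** The routine parts — reducing to complete $S$, computing the graded annihilator via \ref{hrlr.6}, and the final HSL invocation copied from \ref{hrlr.8} — are mechanical. The real content, and the step I expect to require care, is the passage from ``$u\notin\fn^{[p]}$'' to ``$u\notin\fp^{[p]}$ for some associated (indeed minimal) prime $\fp$ of $\fa$.'' This is a statement about how Frobenius powers of primary ideals intersect inside $\fn^{[p]}$, and getting it cleanly will likely use that $S$ is regular (so $f$ is flat and Frobenius powers behave like honest $p$-th-power substitutions in a regular system of parameters) together with \ref{hrlr.7}. I would phrase it as: if $u\in\fp_i^{[p]}$ for every minimal prime $\fp_i$ of $\fa$, then $u\in\bigcap_i\fp_i^{[p]}=(\bigcap_i\fp_i)^{[p]}=(\sqrt{\fa})^{[p]}\subseteq\fn^{[p]}$, contradicting the choice of $u$ — using \ref{hrlr.7}(i) for the first equality. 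That disposes of the obstacle in one line once one is willing to replace $\fa$ by $\sqrt{\fa}$ at the level of associated primes, and then Proposition \ref{hrlr.8} finishes the argument, since $x$-torsion of $(0:_E\fa)$ over $R[x,f]$ is $x$-torsion over $S[x,f]$, and any such would produce $x^h$ annihilating $(0:_E\fp_i)\subseteq(0:_E\fa)$, contrary to \ref{hrlr.8}.
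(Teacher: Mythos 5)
Your argument has a genuine gap: it establishes that $(0:_E\fa)$ is not $x$-torsion (i.e., not annihilated by any single power $x^h$), which is strictly weaker than the required conclusion that $(0:_E\fa)$ is $x$-torsion-\emph{free} (i.e., $\Gamma_x((0:_E\fa))=0$). Your plan asserts that, because $(0:_E\fa)$ is Artinian, ``it is $x$-torsion-free as soon as one rules out the possibility that it is annihilated by some power $x^h$''; but the Hartshorne--Speiser--Lyubeznik theorem only says that the \emph{torsion submodule} $\Gamma_x$ of an Artinian module is killed by some $x^h$. Ruling out $x^h(0:_E\fa)=0$ tells you $\Gamma_x\neq(0:_E\fa)$, not $\Gamma_x=0$. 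Your closing sentence tries to patch this by claiming that nonzero torsion ``would produce $x^h$ annihilating $(0:_E\fp_i)\subseteq(0:_E\fa)$'', but that inclusion runs the wrong way: $(0:_E\fp_i)$ sits inside $(0:_E\fa)$, not inside $\Gamma_x((0:_E\fa))$. Indeed, your own appeal to Proposition \ref{hrlr.8} shows that $(0:_E\fp_i)$ is \emph{not} $x$-torsion, so it cannot lie in $\Gamma_x$, and HSL gives you no $x^h$ killing it.

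The paper closes the gap by running your kind of argument on the correct ideal. Assuming for contradiction that $\Gamma_x(E_R(R/\fm))\neq 0$, Matlis duality (Remark \ref{hrlr.5}, using completeness) writes $\Gamma_x(E_R(R/\fm))=(0:_E\fb)$ for the proper ideal $\fb:=(0:_S\Gamma_x(E_R(R/\fm)))$; since $\Gamma_x$ is automatically an $S[x,f]$-submodule of $E$, Lemma \ref{hrlr.4} gives $u\in(\fb^{[p]}:\fb)$. One then takes $\fp$ a minimal prime of $\fb$ (\emph{not} of $\fa$), so that $u\in(\fp^{[p]}:\fp)$ by Proposition \ref{hrlr.7}(v), and $(0:_E\fp)\subseteq(0:_E\fb)=\Gamma_x$ is genuinely $x$-torsion; the contrapositive of Proposition \ref{hrlr.8} then forces $u\in\fp^{[p]}\subseteq\fn^{[p]}$, contradicting the choice of $u$. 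The idea you are missing is to let the torsion submodule itself choose the prime, through its annihilator ideal $\fb\supseteq\fa$: the primary decomposition of $\fa$ does not control where $\Gamma_x$ sits. Your observation that some minimal prime $\fp_i$ of $\fa$ satisfies $u\notin\fp_i^{[p]}$ is correct and cleanly shows $u^{\nu_n}\notin\fa^{[p^n]}$ for all $n$, but this only rules out $(0:_E\fa)$ being $x$-torsion and cannot by itself reach $x$-torsion-freeness.
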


\begin{proof} As in the first paragraph of the proof of Corollary
\ref{hrlr.12}, we can extend the specified $R[x,f]$-module structure
on $E_R(R/\fm)$ to an $\widehat{R}[x,f]$-module structure, in a
unique way: note that $\widehat{R}$ is $F$-pure, by Lemma
\ref{mr.HR}, and $u \in \left((\fa
\widehat{S})^{[p]}:_{\widehat{S}}\fa \widehat{S}\right) \setminus
(\fn \widehat{S})^{[p]}$. Thus we can, and do, assume that $R$ and
$S$ are complete.

It now follows from Remark \ref{hrlr.5} that, if $\fb$ denotes
$(0:_S\Gamma_x(E_R(R/\fm)))$, then $\Gamma_x(E_R(R/\fm)) = (0
:_E\fb)$. We suppose that $\fb$ is a proper ideal of $S$, and seek a
contradiction. Since $\Gamma_x(E_R(R/\fm))$ is an $S[x,f]$-submodule
of $E$, we conclude from Lemma \ref{hrlr.4} that $u \in
(\fb^{[p]}:\fb)$. Let $\fp$ be a minimal prime ideal of $\fb$. Then
it follows from Proposition \ref{hrlr.7}(v) that $u \in
(\fp^{[p]}:\fp)$, so that $(0 :_E\fp)$ is an $S[x,f]$-submodule of
$E$, by Lemma \ref{hrlr.4}. Since $\fp \supseteq \fb$, we have $(0
:_E\fp) \subseteq (0 :_E\fb) = \Gamma_x(E_R(R/\fm))$; therefore $(0
:_E\fp)$ is an $x$-torsion $S[x,f]$-submodule of $E$.

We can now use Proposition \ref{hrlr.8} to deduce that $u \in
\fp^{[p]}$, so that, since $\fp \subseteq \fn$, we must have $u \in
\fn^{[p]}$. This contradicts the choice of $u$. Hence $\fb = S$ and
$E_R(R/\fm)$ is an $x$-torsion-free left $R[x,f]$-module.
\end{proof}

\begin{thm}
\label{fp.1} Suppose that $(R,\fm)$ is local. Then $R$ is $F$-pure
if and only if the $R$-module structure on $E_R(R/\fm)$ can be
extended to an $x$-torsion-free left $R[x,f]$-module structure.
\end{thm}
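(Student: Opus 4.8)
The plan is to prove Theorem \ref{fp.1} by combining the two directions, one of which is already available from \cite{gatcti} and the other of which has just been established in Proposition \ref{fp.0}. The forward implication (``only if'') is the substantive new content: assuming $(R,\fm)$ is $F$-pure, I must produce an $x$-torsion-free left $R[x,f]$-module structure on $E_R(R/\fm)$. The backward implication (``if'') is quoted: if such a structure exists, then $R$ is $F$-pure by \cite[Theorem 3.5]{gatcti}, as recalled at the start of this section.

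First I would reduce the forward direction to the complete case. Recall that $E_R(R/\fm)$ is naturally a module over $\widehat{R}$ and that $E_R(R/\fm)\cong E_{\widehat{R}}(\widehat{R}/\fm\widehat{R})$; moreover, an $x$-torsion-free left $\widehat{R}[x,f]$-module structure on it restricts to an $x$-torsion-free left $R[x,f]$-module structure. By Lemma \ref{mr.HR}(ii), $\widehat{R}$ is again $F$-pure, so it suffices to treat $\widehat{R}$, which by Cohen's Structure Theorem is a homomorphic image $S/\fa$ of a complete regular local ring $S$ of characteristic $p$ (with $\fa$ a proper, nonzero ideal, the zero case being trivial since then $R$ is regular and one can invoke Corollary \ref{hrlr.12} or simply the standard structure on $H^{\dim R}_{\fm}(R)$). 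Thus we are exactly in the situation of Notation \ref{hrlr.1}, with $S$ complete.

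Next I would invoke Fedder's Theorem \ref{mr.1}: since $R=S/\fa$ is $F$-pure, we have $(\fa^{[p]}:\fa)\not\subseteq\fn^{[p]}$, so we may choose $u\in(\fa^{[p]}:\fa)\setminus\fn^{[p]}$. Now Proposition \ref{fp.0} applies verbatim: endowing $E=E_S(S/\fn)$ with the $S[x,f]$-structure $xe=uye$ makes $(0:_E\fa)=E_R(R/\fm)$ an $S[x,f]$-submodule, hence a left $R[x,f]$-module extending its $R$-module structure, and Proposition \ref{fp.0} asserts precisely that this structure is $x$-torsion-free. (Indeed Proposition \ref{fp.0} already carries out the reduction to the complete case internally, so the cleanest write-up is simply to cite it.) This completes the ``only if'' direction, and together with the quoted converse finishes the proof.

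The main obstacle is entirely contained in Proposition \ref{fp.0} and is therefore already discharged before we reach Theorem \ref{fp.1}: the delicate point there is ruling out $x$-torsion, which is done by taking a minimal prime $\fp$ of the annihilator ideal $\fb$ of the torsion submodule, pushing $u$ into $(\fp^{[p]}:\fp)$ via Proposition \ref{hrlr.7}(v), and then using Proposition \ref{hrlr.8} (itself resting on Proposition \ref{hrlr.6} and the Hartshorne--Speiser--Lyubeznik Theorem) to force $u\in\fp^{[p]}\subseteq\fn^{[p]}$, contradicting the choice of $u$. So at the level of Theorem \ref{fp.1} there is essentially no obstacle left: the theorem is a two-line corollary, one line per implication. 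I would write it as: ``($\Rightarrow$) By Lemma \ref{mr.HR}(ii) we may assume $R$ is complete, hence of the form $S/\fa$ with $S$ a complete regular local ring; now apply Proposition \ref{fp.0}. ($\Leftarrow$) This is \cite[Theorem 3.5]{gatcti}.''
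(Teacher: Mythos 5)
Your proof is correct and follows essentially the same route as the paper's: for the forward direction, reduce to the complete case via Lemma \ref{mr.HR}, express $R$ as $S/\fa$ by Cohen's Structure Theorem, and invoke Proposition \ref{fp.0}; the backward direction is quoted from \cite[Theorem 3.5]{gatcti}. Your remark about handling the trivial case $\fa=0$ is a sensible precaution, though one can always arrange $\fa\neq 0$ by enlarging $S$, as the paper implicitly assumes.
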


\begin{proof} ($\Leftarrow$) This was proved in \cite[Theorem
3.5]{gatcti}.

($\Rightarrow$) Assume that $R$ is $F$-pure; note that, by Lemma
\ref{mr.HR}, this implies that the completion $\widehat{R}$ of $R$
is $F$-pure. Since $E_R(R/\fm)$ has a natural structure as a module
over $\widehat{R}$, and as there is an $\widehat{R}$-isomorphism
$E_R(R/\fm) \cong E_{\widehat{R}}(\widehat{R}/\fm \widehat{R})$, we
can, and do, assume that $R$ is complete.

We now appeal to Cohen's Structure Theorem for complete local rings
containing a subfield and write $R = S/\fa$, where $S$ is a regular
local ring of characteristic $p$ and $\fa$ is a proper, non-zero
ideal of $S$. It now follows from Proposition \ref{fp.0} that the
$R$-module structure on $E_R(R/\fm)$ can be extended to an
$x$-torsion-free left $R[x,f]$-module structure.
\end{proof}

\begin{cor}
\label{fp.2} Suppose that $(R,\fm)$ is local and $F$-pure. Then $R$
(is reduced and) has a tight closure test element for modules.
\end{cor}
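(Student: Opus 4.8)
The plan is to deduce Corollary \ref{fp.2} from Theorem \ref{fp.1} together with the main test-element result of \cite{gatcti}. By Theorem \ref{fp.1}, since $(R,\fm)$ is $F$-pure, the $R$-module structure on $E := E_R(R/\fm)$ extends to an $x$-torsion-free left $R[x,f]$-module structure. Now $E$ is Artinian as an $R$-module, so by Theorem \ref{ga3.11} the set $\mathcal{I}(E)$ of $E$-special $R$-ideals is finite. Let $\fb$ be the smallest ideal of positive height in $\mathcal{I}(E)$ (such an ideal exists: $\fb = R$ is available as a last resort since $R\in\mathcal{I}(E)$, of height $\infty$, although in fact there are better candidates). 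Then, as recalled in the paragraph of the excerpt immediately following Theorem \ref{ga3.11} (``One of the main results of \cite{gatcti} is that\ldots''), every element of $\fb\cap R^{\circ}$ is a test element for modules for $R$. So it remains only to check that $\fb\cap R^{\circ}\neq\emptyset$, i.e. that $\fb$ is an ideal of positive height that is not contained in any minimal prime of $R$ --- but an ideal of positive height in a Noetherian ring meets $R^{\circ}$ (prime avoidance: $\fb$ is not contained in the union of the finitely many minimal primes, else $\fb$ would be contained in some minimal prime and hence have height $0$). Finally, $F$-pure rings are reduced --- this is noted several times in the excerpt (e.g. an $x$-torsion-free left $R[x,f]$-module structure on a local cohomology module forces reducedness) --- which justifies the parenthetical remark.

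Concretely, I would structure the proof in two short steps. \textbf{Step 1.} Invoke Theorem \ref{fp.1} to put an $x$-torsion-free left $R[x,f]$-module structure on $E_R(R/\fm)$, then invoke Theorem \ref{ga3.11} (with $G = E_R(R/\fm)$, which is Artinian over $R$) to conclude $\mathcal{I}(E_R(R/\fm))$ is finite. \textbf{Step 2.} Apply the cited result of \cite{gatcti}: the smallest ideal $\fb$ of positive height in the finite set $\mathcal{I}(E_R(R/\fm))$ has the property that each element of $\fb\cap R^{\circ}$ is a test element for modules, and $\fb\cap R^{\circ}\neq\emptyset$ by prime avoidance. For the reducedness clause, note that an $F$-pure ring is reduced (equivalently, the $x$-torsion-freeness of $E_R(R/\fm)$ over $R[x,f]$ forces $R$ to be reduced, as remarked in \S0 and \S1).

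I do not expect any genuine obstacle here: this corollary is a direct harvest of Theorem \ref{fp.1} combined with machinery already developed in \cite{ga} and \cite{gatcti} and summarized in the excerpt. The only point requiring a word of care is the non-emptiness of $\fb\cap R^{\circ}$, and the only slightly delicate conceptual input --- that an $F$-pure local ring, even a non-excellent one, need not be assumed reduced a priori but turns out to be so --- is already established earlier. So the ``proof'' is essentially a two-line citation chain, which is exactly why it is stated as a corollary rather than a theorem.

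\begin{proof} Since $(R,\fm)$ is $F$-pure, Theorem \ref{fp.1} shows that the $R$-module structure on $E := E_R(R/\fm)$ can be extended to an $x$-torsion-free left $R[x,f]$-module structure; fix such a structure. As $E$ is Artinian as an $R$-module, Theorem \ref{ga3.11} shows that the set $\mathcal{I}(E)$ of $E$-special $R$-ideals is finite. Note that $R \in \mathcal{I}(E)$ (it is the graded annihilator, intersected with $R$, of the zero submodule of $E$), so $\mathcal{I}(E)$ contains an ideal of positive height; let $\fb$ be the smallest such ideal. Since $\fb$ has positive height, it is not contained in any minimal prime ideal of $R$, and so, by prime avoidance (there are only finitely many minimal primes), $\fb \cap R^{\circ} \neq \emptyset$. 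By the result of \cite{gatcti} recalled after Theorem \ref{ga3.11} above, every element of $\fb \cap R^{\circ}$ is a test element for modules for $R$. Finally, the existence of an $x$-torsion-free left $R[x,f]$-module structure on $E$ forces $R$ to be reduced, as noted in the Introduction. \end{proof}
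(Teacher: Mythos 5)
Your proof is correct and follows essentially the same route as the paper: invoke Theorem \ref{fp.1} to obtain an $x$-torsion-free left $R[x,f]$-module structure on $E_R(R/\fm)$, then appeal to \cite[Theorem 3.5]{gatcti}. The paper's own proof is exactly this two-line citation chain, whereas you have additionally unpacked the content of the cited result (finiteness of $\mathcal{I}(E)$, the smallest ideal of positive height, non-emptiness of $\fb\cap R^{\circ}$); that extra detail is accurate but not strictly necessary once the citation is given.
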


\begin{note} It should be noted that, in Corollary \ref{fp.2}, we have not assumed that $R$ is
excellent.
\end{note}

\begin{proof} By Theorem \ref{fp.1}, the $R$-module structure on $E_R(R/\fm)$ can be
extended to an $x$-torsion-free left $R[x,f]$-module structure. It
now follows from \cite[Theorem 3.5(i)]{gatcti} that $R$ (is reduced
and) has a tight closure test element for modules.
\end{proof}

One of the main aims of this paper is to prove that if $R$ is (not
necessarily local but) excellent and $F$-pure, then $R$ has a big
tight closure test element. In fact, we shall show in \S \ref{aet}
that if $c \in R^{\circ}$ is such that $R_c$ is regular, then $c$
itself is a big test element for $R$. In this section, we shall
concentrate on the case where $(R,\fm)$ is local (and $F$-pure and
excellent).

\begin{rmks}
\label{fp.21} Suppose that $R$ is excellent and reduced.
\begin{enumerate}
\item There exists $c \in R^{\circ}$ such that $R_c$ is regular.
This can be proved easily by prime avoidance arguments, based on the
facts that  $$\Reg (R) := \{ \fp \in \Spec(R) : R_{\fp} \mbox{~is a
regular local ring}\}$$ is open in $\Spec (R)$, and, for each
minimal prime ideal $\fq$ of $R$, the localization $R_{\fq}$ is a
field and so regular.
\item Since $R$ is reduced and excellent, the
concepts of test element for modules and test element for ideals for
$R$ coincide: see \cite[Discussion (8.6) and Proposition
(8.15)]{HocHun90}. If $(R,\fm)$ is local and $c \in R^{\circ}$ is a
test element for the completion $\widehat{R}$ of $R$, then $c$ is a
test element for $R$.
\end{enumerate}
\end{rmks}

\begin{lem}
\label{fp.23} Use the notation of\/ {\rm \ref{hrlr.1}}, and suppose
that $R$ is $F$-pure. Let $t$ denote the minimum number of
generators of the $S$-module $T := (\fa^{[p]}:\fa)/\fa^{[p]}$. Then
it is possible to find $t$ elements $u_1, \ldots, u_t \in
(\fa^{[p]}:\fa) \setminus \fn^{[p]}$ whose natural images in $T$
generate $T$.
\end{lem}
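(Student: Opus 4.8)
The statement asserts that for the finitely generated $S$-module $T = (\fa^{[p]}:\fa)/\fa^{[p]}$, we can choose generators $u_1 + \fa^{[p]}, \ldots, u_t + \fa^{[p]}$ of $T$ with each representative $u_i$ lying outside $\fn^{[p]}$. The plan is to start from an arbitrary minimal generating set of $T$ and adjust it using the $F$-purity hypothesis. By Fedder's Theorem \ref{mr.1}, $F$-purity of $R = S/\fa$ means precisely that $(\fa^{[p]}:\fa) \not\subseteq \fn^{[p]}$, so we may fix some $u \in (\fa^{[p]}:\fa) \setminus \fn^{[p]}$. The key point is that $\fa^{[p]} \subseteq \fn^{[p]}$ (since $\fa \subseteq \fn$), so the residue field datum governing membership in $\fn^{[p]}$ is insensitive to adding elements of $\fa^{[p]}$; thus $u$ stays out of $\fn^{[p]}$ after any modification by $\fa^{[p]}$.

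First I would pick any $v_1, \ldots, v_t \in (\fa^{[p]}:\fa)$ whose images form a minimal generating set of $T$; this is possible by definition of $t$, and since $S$ is local (Nakayama) such a set has exactly $t$ elements. Now for each index $i$, I replace $v_i$ by $u_i := v_i + u$. Then $u_i \in (\fa^{[p]}:\fa)$ still, since the latter is an ideal and $u$ lies in it. I claim $u_i \notin \fn^{[p]}$: if $u_i \in \fn^{[p]}$, then $u = u_i - v_i$, and I would need $v_i \in \fn^{[p]}$ to force the contradiction $u \in \fn^{[p]}$ — which is not automatic. So a single global shift by $u$ does not obviously work; the representatives $v_i$ might themselves lie in $\fn^{[p]}$.

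The fix, and the step I expect to require the most care, is to handle each generator separately according to whether its chosen representative already avoids $\fn^{[p]}$. If $v_i \notin \fn^{[p]}$, set $u_i := v_i$. If $v_i \in \fn^{[p]}$, set $u_i := v_i + u$; then $u_i \in (\fa^{[p]}:\fa)$, and $u_i \notin \fn^{[p]}$ because otherwise $u = u_i - v_i \in \fn^{[p]}$, contradicting the choice of $u$. In either case $u_i \notin \fn^{[p]}$. Finally I must check the images $u_i + \fa^{[p]}$ still generate $T$. In the first case the image is unchanged. In the second case, the image of $u_i$ differs from that of $v_i$ by the image $\overline{u}$ of $u$ in $T$; so it suffices to observe that replacing $\overline{v_i}$ by $\overline{v_i} + \overline{u}$ in a generating set still yields a generating set provided $\overline{u}$ lies in the submodule generated by $\overline{v_1}, \ldots, \overline{v_t}$ — which it does, that submodule being all of $T$. (Equivalently: the $S$-linear automorphism of $T$ sending a generator to itself plus a fixed element of $T$, and fixing the others, is well-defined and bijective, hence carries generating sets to generating sets; but one must verify it is genuinely a module map, which it is only when the shift is applied to generators, as here.) This establishes that $u_1, \ldots, u_t$ have the required properties.
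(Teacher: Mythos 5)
Your proposal rests on the claim that ``replacing $\overline{v_i}$ by $\overline{v_i} + \overline{u}$ in a generating set still yields a generating set provided $\overline{u}$ lies in the submodule generated by $\overline{v_1}, \ldots, \overline{v_t}$''. This is false, and the parenthetical appeal to an ``$S$-linear automorphism'' does not repair it: no such automorphism is defined, because $\overline{u}$ is expressed as an $S$-combination involving the very generator being modified. Concretely, since the $v_i$ are a minimal generating set and $S$ is local, a family of $t$ elements of $T$ generates if and only if the corresponding transition matrix over $S/\fn$ is invertible. If $\overline{u} = \sum_j s_j\overline{v_j}$ and you replace $\overline{v_i}$ by $\overline{v_i}+\overline{u}$ only for $i$ in some subset $I$, the transition matrix is $\mathbf{1} + A$ with $A_{ij}=s_j$ for $i\in I$ and zero elsewhere, and $\det(\mathbf{1}+A)$ need not be a unit. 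Already with one replacement, if $p>2$ and $\overline{u}\equiv -\overline{v_1}$, then $\overline{v_1}+\overline{u}\equiv 0$ in $T/\fn T$ and the new set fails to generate. Since $u$ is chosen before and independently of $v_1,\dots,v_t$, nothing in your argument excludes this situation.

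The gap is reparable, but you have to exploit more structure. One fix entirely in the spirit of your attempt: at least one of the original representatives, say $v_{j_0}$, must lie outside $\fn^{[p]}$ (else $(\fa^{[p]}:\fa)=\fa^{[p]}+\sum Sv_i\subseteq\fn^{[p]}$, contradicting Fedder). Take $u:=v_{j_0}$ as your shift element. Then for each bad index $i$, $u_i:=v_i+v_{j_0}\notin\fn^{[p]}$, and the transition matrix is a unipotent elementary matrix (you are literally adding one generator to another), hence invertible, so the $u_i$ generate. The paper proceeds differently: it constructs $u_1,\dots,u_t$ one at a time, choosing $u_{j+1}$ in $(\fa^{[p]}:\fa)$ outside the union of the two ideals $\fn^{[p]}$ and $\fn(\fa^{[p]}:\fa)+\fa^{[p]}+Su_1+\cdots+Su_j$, using the elementary fact that an ideal contained in a union of two ideals is contained in one of them. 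Both routes work; the paper's avoids committing to a fixed minimal generating set at the outset, while your corrected version is a post-hoc adjustment of one.
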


\begin{proof} Suppose, inductively, that $j \in \nn$ is such that $j
< t$ and we have constructed $u_1, \ldots, u_j \in (\fa^{[p]}:\fa)
\setminus \fn^{[p]}$ whose natural images in $T/\fn T$ are linearly
independent over the field $S/\fn$. This is certainly true when $j =
0$. Note that $(\fa^{[p]}:\fa) \not\subseteq \fn^{[p]}$, by Fedder's
Theorem \ref{mr.1}. By ideal avoidance, there exists
$$
u_{j+1} \in (\fa^{[p]}:\fa) \setminus \left(\fn^{[p]} \cup \left(
\fn(\fa^{[p]}:\fa) + \fa^{[p]} + Su_1 + \cdots + Su_j\right)\right),
$$
(or else the vector space $T/\fn T$ over $S/\fn$ would have
dimension less than $t$). Then the natural images of $u_1, \ldots,
u_j, u_{j+1}$ in $T/\fn T$ are linearly independent over $S/\fn$.
This completes the inductive step.

Hence, by induction, we can construct $u_1, \ldots, u_t \in
(\fa^{[p]}:\fa) \setminus \fn^{[p]}$ whose natural images in $T/\fn
T$ are linearly independent over the field $S/\fn$; the natural
images of $u_1, \ldots, u_t$ in $T$ generate this $S$-module.
\end{proof}

\begin{prop}
\label{fp.22} Use the notation of\/ {\rm \ref{hrlr.1}}, and suppose
that $R$ is $F$-pure and that $S$ (and $R$) are complete. Use
Lemma\/ {\rm \ref{fp.23}} to find $u_1, \ldots, u_t \in
(\fa^{[p]}:\fa) \setminus \fn^{[p]}$ such that their natural images
in $(\fa^{[p]}:\fa)/\fa^{[p]}$ form a generating set for this
$S$-module. Bearing in mind Lemma\/ {\rm \ref{hrlr.4}}, for each $i
= 1, \ldots, t$, let $E_i$ denote $(0:_E\fa) = E_R(R/\fm)$ endowed
with the left $R[x,f]$-module structure for which $xe = u_iye$ for
all $e \in (0:_E\fa)$, and note that $E_i$ is $x$-torsion-free, by
Proposition\/ {\rm \ref{fp.0}}; let $\fb_i$ be the unique smallest
ideal of positive height in ${\mathcal I}(E_i)$.

Set $\fb = \fb_1 + \cdots + \fb_t$, and let $c \in R$ be such that
$R_c$ is regular. Then $\fb R_c = R_c$.
\end{prop}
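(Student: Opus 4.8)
The plan is to establish the equivalent statement that no prime ideal $\fp$ of $R$ with $c\notin\fp$ contains $\fb$. Fix such a $\fp$; since $R_c$ is regular, $R_\fp$ is a ring of fractions of $R_c$ and so is a regular (hence $F$-pure) local ring. If $\fp$ is a minimal prime of $R$ then, as $R$ is reduced and each $\fb_i$ has positive height, $\fb_1\not\subseteq\fp$ already; so from now on I treat $\fp$ as arbitrary subject only to $R_\fp$ being regular, and let $\fP$ be the inverse image of $\fp$ in $S$, so that $\fa\subseteq\fP$, $R_\fp\cong S_\fP/\fa S_\fP$ with $S_\fP$ regular local, and $\fa S_\fP$ is a proper, non-zero ideal of $S_\fP$. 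The point to bear in mind is that $E_R(R/\fm)$ cannot itself be localized at $\fP$; the comparison with the local picture at $\fp$ will instead be made by a fresh application of the machinery of \S\ref{hrlr} to $\widehat{S_\fP}\lra\widehat{R_\fp}$.

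First I would single out the relevant $\fb_i$. Applying Fedder's Theorem \ref{mr.1} to $S_\fP\lra R_\fp$ gives $\bigl((\fa S_\fP)^{[p]}:\fa S_\fP\bigr)\not\subseteq(\fP S_\fP)^{[p]}$; as $(\fa S_\fP)^{[p]}=\fa^{[p]}S_\fP\subseteq(\fP S_\fP)^{[p]}$, the $S_\fP$-module $T_\fP:=\bigl((\fa^{[p]}:\fa)/\fa^{[p]}\bigr)_\fP$ is non-zero, and, $R_\fp$ being Gorenstein, Corollary \ref{hrlr.10} (read over $S_\fP$) shows $T_\fP$ is cyclic. Since the natural images of $u_1,\dots,u_t$ generate $T$, they generate $T_\fP$, so they span the one-dimensional $\kappa(\fP)$-space $T_\fP/\fP S_\fP T_\fP$, whence (after relabelling) $u_1$ generates $T_\fP$ by Nakayama's Lemma; that is, $\bigl((\fa S_\fP)^{[p]}:\fa S_\fP\bigr)=(\fa S_\fP)^{[p]}+S_\fP u_1$. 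It then suffices to show $\fb_1\not\subseteq\fp$.

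The key step, which I expect to be the crux of the argument, is a complete description of $\mathcal I(E_1)$. Because $S$ is complete, every $R[x,f]$-submodule of $E_1=(0:_E\fa)$ is an $S$-submodule of $E$, hence equals $(0:_E\fk)$ for some ideal $\fk$ with $\fa\subseteq\fk$ (Remark \ref{hrlr.5}); such a submodule is $x$-stable exactly when $u_1\in(\fk^{[p]}:\fk)$, by Lemma \ref{hrlr.4}; and for every such $\fk$ the degree-zero component of $\grann_{R[x,f]}(0:_E\fk)$ is $(0:_R(0:_E\fk))=\fk/\fa$, so $\ann_{E_1}\bigl(\grann_{R[x,f]}(0:_E\fk)\bigr)\subseteq(0:_E\fk)$ and therefore $(0:_E\fk)$ is a special annihilator submodule of $E_1$ with $\Delta\bigl((0:_E\fk)\bigr)=\fk/\fa$ in the notation of Proposition \ref{ga1.11}. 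Hence $\mathcal I(E_1)=\{\fk/\fa:\ \fa\subseteq\fk\text{ a radical ideal of }S,\ u_1\in(\fk^{[p]}:\fk)\}$; by Theorem \ref{ga3.11} this set is finite (as $E_1$ is $x$-torsion-free by Proposition \ref{fp.0} and Artinian over $R$), and $\fb_1=\fk_1/\fa$ for a uniquely determined radical ideal $\fk_1\supseteq\fa$ with $u_1\in(\fk_1^{[p]}:\fk_1)$ and $\height_R(\fk_1/\fa)>0$.

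Finally I would localize and complete. For any $\fk/\fa\in\mathcal I(E_1)$, faithful flatness gives $u_1\in\bigl((\fk\widehat{S_\fP})^{[p]}:\fk\widehat{S_\fP}\bigr)$. Since $\widehat{R_\fp}=\widehat{S_\fP}/\fa\widehat{S_\fP}$ is regular and $\bigl((\fa\widehat{S_\fP})^{[p]}:\fa\widehat{S_\fP}\bigr)=(\fa\widehat{S_\fP})^{[p]}+\widehat{S_\fP}u_1$, Corollary \ref{hrlr.12}, applied to $\widehat{S_\fP}\lra\widehat{R_\fp}$, shows that $D:=(0:_{E''}\fa\widehat{S_\fP})$, where $E'':=E_{\widehat{S_\fP}}(\widehat{S_\fP}/\fP\widehat{S_\fP})$, equipped with the structure $xe=u_1ye$, is a simple $\widehat{R_\fp}[x,f]$-module. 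The $x$-stable $\widehat{S_\fP}$-submodule $(0:_{E''}\fk\widehat{S_\fP})$ of $D$ is therefore $0$ or $D$, so by Matlis duality $\fk\widehat{S_\fP}=\widehat{S_\fP}$ or $\fk\widehat{S_\fP}=\fa\widehat{S_\fP}$. In the first case $\fk S_\fP=S_\fP$, i.e.\ $\fk\not\subseteq\fP$. In the second, $\fk S_\fP=\fa S_\fP$; as $\fa S_\fP$ is prime ($R_\fp$ being a domain), a short comparison of localizations at $\fP$ shows its contraction $\fQ$ to $S$ is a minimal prime of $\fa$ and the unique minimal prime of the radical ideal $\fk$ lying in $\fP$, so $\fk\subseteq\fQ$ and $\height_R(\fk/\fa)=0$. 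Applying this dichotomy to $\fk=\fk_1$, whose image $\fb_1$ in $R$ has positive height, we obtain $\fk_1\not\subseteq\fP$, i.e.\ $\fb_1\not\subseteq\fp$, and therefore $\fb\not\subseteq\fp$, as required.
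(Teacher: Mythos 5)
Your proof is correct and follows essentially the same route as the paper's: fix a prime $\fp$ with $c\notin\fp$, use Corollary \ref{hrlr.10} to show the relevant $S_\fP$-module is cyclic (so one $u_i$ suffices locally), show that $u_i$ stabilizes the ideal $\fk_i$ corresponding to $\fb_i$, localize, and use the simplicity of the injective envelope (Corollary \ref{hrlr.12}) together with Matlis duality to force $\fb_iR_\fp$ to be $0$ or $R_\fp$, ruling out $0$ by the height condition. There are a few inessential variations worth noting. First, you prove from scratch, via Remark \ref{hrlr.5} (Matlis duality over the complete $S$) together with Lemma \ref{hrlr.4}, a full description of $\mathcal I(E_1)$ as $\{\fk/\fa : \fk\supseteq\fa\text{ radical},\ u_1\in(\fk^{[p]}:\fk)\}$; the paper only needs the one fact that $\fb_i=\fk_i/\fa$ with $u_i\in(\fk_i^{[p]}:\fk_i)$, which it extracts directly from \cite[Proposition 1.8]{gatcti} and Lemma \ref{hrlr.4}. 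Your description is a pleasant and correct observation, but a fraction of it suffices. Second, you pass to the completions $\widehat{S_\fP}$ and $\widehat{R_\fp}$ before invoking Corollary \ref{hrlr.12}; this is unnecessary, since Corollary \ref{hrlr.12} is stated and proved for a not-necessarily-complete $S$, the completion being handled internally, and the paper applies it directly over $S_\fq$. Third, the special-casing of minimal primes of $R$ in your first paragraph is a dead branch: since $S$ is a domain, $\fa S_\fP\neq 0$ for every $\fP\supseteq\fa$, so the notation of \ref{hrlr.1} applies to $S_\fP\lra R_\fp$ uniformly and the minimal primes need no separate treatment. None of these affects correctness; they only make the argument somewhat longer than the paper's.
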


\begin{proof} Let $\fp \in \Spec (R)$ be such that $c \not\in \fp$.
Then $R_{\fp}$, being a localization of $R_c$, is regular. Let $\fq$
be the prime ideal of $S$ that contains $\fa$ and is such that $\fp
= \fq/\fa$. Note that
$$
\left((\fa S_{\fq})^{[p]} : \fa S_{\fq}\right) =  (\fa
S_{\fq})^{[p]} + (u_1/1)S_{\fq} + \cdots + (u_t/1)S_{\fq}.
$$
Since $S_{\fq}/\fa S_{\fq} \cong R_{\fp}$ is a regular local ring,
it follows from Corollary \ref{hrlr.10} that the $S_{\fq}$-module
$\left((\fa S_{\fq})^{[p]} : \fa S_{\fq}\right)/\fa S_{\fq}^{[p]}$
is cyclic; therefore, there exists $i \in \{1,\ldots,t\}$ such that
$$
\left((\fa S_{\fq})^{[p]} : \fa S_{\fq}\right) =  \fa S_{\fq}^{[p]} + (u_i/1)S_{\fq}.
$$
Let $G$ denote the injective envelope, over $R_{\fp}$, of the simple
$R_{\fp}$-module, and use Lemma \ref{hrlr.4} in conjunction with the
isomorphism $R_{\fp} \cong S_{\fq}/\fa S_{\fq}$ and the element
$u_i/1$ to put a left $R_{\fp}[x,f]$-module structure on $G$.

Let $\fk_i$ be the ideal of $S$ that contains $\fa$ and is such that
$\fb_i = \fk_i/\fa$. Since $R$ is complete, it follows from
\cite[Proposition 1.8]{gatcti} that
$$
(0:_E\fk_i) = (0:_{E_R(R/\fm)}\fb_i) = \ann_{E_i}(\fb_iR[x,f]),
$$
so that this is an $R[x,f]$-submodule of $E_i$. It therefore follows
from Lemma \ref{hrlr.4} that $u_i \in
\left((\fk_i)^{[p]}:\fk_i\right)$. Therefore, in $S_{\fq}$, we have
$$
u_i/1 \in \left((\fk_iS_{\fq})^{[p]}:\fk_iS_{\fq}\right),
$$
and so $(0:_G \fk_iS_{\fq}/\fa S_{\fq}) = (0:_G\fb_iR_{\fp})$ is an
$R_{\fp}[x,f]$-submodule of $G$, again by Lemma \ref{hrlr.4}. Now
$G$ is simple as $R_{\fp}[x,f]$-module, by Corollary \ref{hrlr.12}.
Therefore, since a module over a (not necessarily complete) local
ring has the same annihilator as its Matlis dual, $\fb_iR_{\fp}$
must be $0$ or $R_{\fp}$. Since $\height \fb_iR_{\fp} \geq 1$, we
must have $\fb_iR_{\fp} = R_{\fp}$.

Therefore $\fb R_{\fp} = R_{\fp}$. As the later equation is true for
all $\fp \in \Spec (R)$ such that $c \not\in \fp$, we see that $\fb
R_c = R_c$.
\end{proof}

\begin{thm}
\label{fp.24} Suppose that $(R,\fm)$ is local, $F$-pure and
excellent. Let $c \in R^{\circ}$ be such that $R_c$ is regular. (By
Remark\/ {\rm \ref{fp.21}(i)}, it is possible to find such a $c$.)
Then $c$ is a test element for both $R$ and $\widehat{R}$.
\end{thm}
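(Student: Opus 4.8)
The strategy is to reduce to the complete case, where Proposition \ref{fp.22} is available, and then combine it with the test-element machinery of \cite{gatcti}. First I would pass to the completion $\widehat{R}$: by Lemma \ref{mr.HR}(ii) the ring $\widehat{R}$ is again $F$-pure, and Cohen's Structure Theorem lets us write $\widehat{R} = S/\fa$ with $S$ a complete regular local ring of characteristic $p$ and $\fa$ a proper, non-zero ideal, so that the notation of \ref{hrlr.1} applies with both $S$ and $R$ complete. Also $(\widehat{R})_{c/1}$ is a localization of $R_c$, hence regular. By Remark \ref{fp.21}(ii), once we show $c$ is a test element for $\widehat{R}$, it follows that $c$ is a test element for $R$; so it suffices to treat the complete case, and I shall assume from now on that $R = S/\fa$ with everything complete.

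Next I would invoke Lemma \ref{fp.23} to pick $u_1,\ldots,u_t \in (\fa^{[p]}:\fa)\setminus\fn^{[p]}$ whose images generate $T = (\fa^{[p]}:\fa)/\fa^{[p]}$, and form, for each $i$, the $x$-torsion-free left $R[x,f]$-module $E_i = E_R(R/\fm)$ with $xe = u_i y e$ (torsion-freeness by Proposition \ref{fp.0}). Since $E_i$ is Artinian as an $R$-module and $x$-torsion-free, Theorem \ref{ga3.11} gives that $\mathcal{I}(E_i)$ is finite; let $\fb_i$ be its unique smallest ideal of positive height, and set $\fb = \fb_1 + \cdots + \fb_t$. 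The core input is Proposition \ref{fp.22}, which tells us $\fb R_c = R_c$, i.e. $\fb \not\subseteq \fp$ for every prime $\fp$ with $c \notin \fp$; equivalently, some power of $c$ lies in $\fb$, and in particular $\fb \cap R^{\circ} \neq \emptyset$. Now the key point from \cite{gatcti}: for each $i$, since $E_i$ is an $x$-torsion-free left $R[x,f]$-module structure on $E_R(R/\fm)$ and $\fb_i$ is the smallest ideal of positive height in $\mathcal{I}(E_i)$, every element of $\fb_i \cap R^{\circ}$ is a test element for modules for $R$ (this is the main result of \cite{gatcti} recalled in the introduction). I would need to check that a test element for $R$ relative to one such structure is genuinely a test element for $R$ (it is, by the very definition of test element, which refers to $R^{\circ}$ and to $R[x,f]\otimes_R M$, not to the chosen structure on $E$).

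The remaining step is bookkeeping to get from "$\fb\cap R^\circ$ consists of test elements" to "$c$ is a test element". Write $c^{p^e} \in \fb$ for a suitable $e$ (using $\fb R_c = R_c$); since $c \in R^{\circ}$ and $R$ is reduced ($F$-pure implies reduced), $c^{p^e} \in R^{\circ}$ as well, hence $c^{p^e}$ is a test element for $R$. To pass from $c^{p^e}$ to $c$ itself I would run exactly the argument already used in the proof of Theorem \ref{in.2}: if $M$ is a finitely generated $R$-module and $m \in 0^*_M$, then $c^{p^e} x^j(1\otimes m) = 0$ for all $j$, so
$$
x^e c x^i (1\otimes m) = c^{p^e} x^{e+i}(1 \otimes m) = 0 \quad \text{for all } i \in \nn,
$$
and since $R[x,f]\otimes_R M$ is $x$-torsion-free ($R$ being $F$-pure), we get $c x^i(1\otimes m) = 0$ for all $i$; hence $c$ is a test element for $R$. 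By the first paragraph, $c$ is then also a test element for $\widehat{R}$ (it was already one for $\widehat{R}$ by the complete case of this very argument), completing the proof.

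I expect the only genuinely delicate point to be making sure the hypotheses of Proposition \ref{fp.22} and of the \cite{gatcti} test-element criterion are lined up correctly — in particular that the reduction to the complete case is legitimate both for applying Proposition \ref{fp.22} (which assumes $S$ and $R$ complete) and for transporting the conclusion back down via Remark \ref{fp.21}(ii). Everything else is the same ``$x^e c x^i = c^{p^e} x^{e+i}$'' manipulation together with $x$-torsion-freeness of $R[x,f]\otimes_R M$ that already appears in the proof of Theorem \ref{in.2}.
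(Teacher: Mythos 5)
Your overall architecture is the same as the paper's: reduce to $\widehat{R}$ via Remark \ref{fp.21}(ii), invoke Cohen's Structure Theorem to put $\widehat{R} = S/\fa$ with $S$ complete regular, apply Lemma \ref{fp.23} and Proposition \ref{fp.22} to produce the ideal $\fb = \fb_1 + \cdots + \fb_t$ with $\fb R_c = R_c$, and then use the ``$x^e c x^i(1\otimes m) = c^{p^e}x^{e+i}(1\otimes m) = 0$'' trick plus $x$-torsion-freeness of $R[x,f]\otimes_R M$ to drop from $c^{p^e}$ to $c$. However, there are two genuine gaps in your write-up.

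First, you assert that ``$(\widehat{R})_{c/1}$ is a localization of $R_c$, hence regular.'' This is false: $\widehat{R}_c \cong \widehat{R}\otimes_R R_c$ is a flat \emph{base change} of $R_c$, not a ring of fractions of it (take $R = k[x]_{(x)}$ and $c = x$: then $R_c = k(x)$ is a field, whose only nonzero localization is itself, whereas $\widehat{R}_c = k((x)) \supsetneq k(x)$). The correct argument, and the one the paper uses, is that the fibre rings of the flat map $R_c \to \widehat{R}_c$ are rings of fractions of the formal fibres of $R$; since $R$ is excellent these are (geometrically) regular, and $R_c$ is regular, so $\widehat{R}_c$ is regular. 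This is the essential place where excellence of $R$ (beyond just being $F$-pure and local) is used, so the slip is not cosmetic.

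Second, you establish that each $\fb_i \cap R^{\circ}$ consists of test elements, then produce $c^{p^e} \in \fb = \fb_1 + \cdots + \fb_t$ with $c^{p^e} \in R^\circ$ and conclude directly that $c^{p^e}$ is a test element. But $c^{p^e} \in \fb$ does not put $c^{p^e}$ in any single $\fb_i$, so the assertion does not follow from what you cited. One needs to pass through the test ideal: the paper cites the stronger form of \cite[Theorem 3.5]{gatcti}, namely that each $\fb_i \subseteq \tau(R)$, so that $\fb \subseteq \tau(R)$ and any element of $\tau(R)\cap R^\circ$ is a test element. (Alternatively, one can argue that since each $\fb_i$ has positive height it is generated by elements of $\fb_i \cap R^\circ$, all of which annihilate $x^j(1\otimes m)$ for every $m \in 0^*_M$; hence so does all of $\fb_i$, and hence all of $\fb$.) As written, the step is unjustified.
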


\begin{proof} By Remark \ref{fp.21}(ii), it is sufficient to prove
that $c$ is a test element for $\widehat{R}$. Note that
$\widehat{R}$ is $F$-pure, by Lemma \ref{mr.HR}. Complete local
rings are always excellent. Furthermore, the fibre rings of the flat
ring homomorphism $R_c \lra \widehat{R}_c$ induced by inclusion are
rings of fractions of the formal fibres of $R$, and so are regular;
it therefore follows that $\widehat{R}_c$ is regular. Thus we can,
and do, assume for the remainder of this proof that $R$ is complete.

We now appeal to Cohen's Structure Theorem for complete local rings
containing a subfield and write $R = S/\fa$, where $S$ is a complete
regular local ring of characteristic $p$ and $\fa$ is a proper,
non-zero ideal of $S$. This is consistent with the notation of
\ref{hrlr.1}, and we shall use that notation for the remainder of
this proof.

We now appeal to Proposition \ref{fp.22}. Use Lemma\/ {\rm
\ref{fp.23}} to find $u_1, \ldots, u_t \in (\fa^{[p]}:\fa) \setminus
\fn^{[p]}$ such that their natural images in
$(\fa^{[p]}:\fa)/\fa^{[p]}$ form a generating set for this
$S$-module; for each $i = 1, \ldots, t$, let $E_i$ denote $(0:_E\fa)
= E_R(R/\fm)$ endowed with the ($x$-torsion-free) left
$R[x,f]$-module structure for which $xe = u_iye$ for all $e \in
(0:_E\fa)$; and let $\fb_i$ be the unique smallest ideal of positive
height in ${\mathcal I}(E_i)$. By \cite[Theorem 3.5]{gatcti}, each
$\fb_i$ is contained in the test ideal $\tau(R)$ of $R$, so that
$\fb := \fb_1 + \cdots + \fb_t \subseteq \tau(R)$. By Proposition
\ref{fp.22}, we have $\fb R_c = R_c$. Hence there exists $h \in \N$
such that $c^h \in \fb \subseteq \tau(R)$, and so there exists $e
\in \nn$ such that $c^{p^e}$ is a test element for $R$.

Therefore, for each finitely generated $R$-module $M$ and each $m
\in 0^*_M$, we have $c^{p^e}x^j(1 \otimes m) = 0$ in
$R[x,f]\otimes_RM$, for all $j \in \nn$. Thus, for $m \in 0^*_M$, we
have
$$
x^ecx^i(1 \otimes m) = c^{p^e}x^{e+i}(1 \otimes m) = 0 \quad
\mbox{for all~} i \in \nn.
$$
However, the left $R[x,f]$-module $R[x,f]\otimes_RM$ is
$x$-torsion-free. Hence $cx^i(1 \otimes m) = 0$ for all $i \in \nn$
and all $m \in 0^*_M$. As this is true for all finitely generated
$R$-modules $M$, it follows that $c$ itself is a test element for
$R$.
\end{proof}

\section{\sc Additional embedding theorems and applications to $F$-pure excellent rings}
\label{aet}

In order to extend certain results from Section \ref{fp} to
non-local $F$-pure rings, we are going to modify some constructions
and arguments from \cite{gatcti}. Suppose that $(R,\fm)$ is local
and $F$-pure, and endow $E := E_R(R/\fm)$ with a structure of
$x$-torsion-free left $R[x,f]$-module (Theorem \ref{fp.1} shows that
this is possible). Let $M$ be a finitely generated $R$-module. In
\cite[Theorem 3.5]{gatcti}, the author showed that the graded left
$R[x,f]$-module $R[x,f]\otimes_RM$ can be embedded, by means of a
homogeneous $R[x,f]$-monomorphism, into a product (in the category
of $\Z$-graded left $R[x,f]$-modules and homogeneous homomorphisms)
of countably many graded left $R[x,f]$-modules, each equal either to
a certain graded left $R[x,f]$-module $\widetilde{E}$ constructed
from $E$ (see \cite[Lemma 2.4]{gatcti}) or to an extension (see
\cite[Definition 2.10]{gatcti}) of a shift of $\widetilde{E}$. In
this paper, we are going to modify those ideas so that we can obtain
a similar embedding for $R[x,f]\otimes_RL$ when $L$ is an arbitrary
$R$-module.

\begin{defi}
\label{pl.3} Let $H$ be a left $R[x,f]$-module, let $a \in R$ and
let $\fb$ be an ideal of $R$. We say that $H$ is {\em
$a$-testable\/} if, whenever $h \in H$ and $c \in R^{\circ}$ are
such that $cx^nh = 0$ for all $n \gg 0$, then $ax^nh = 0$ for all $n
\geq 0$.

It should be noted that there is no requirement that $a \in
R^{\circ}$ in this definition.

Furthermore, we say that $H$ is {\em $\fb$-testable\/} if $H$ is
$r$-testable for all $r \in \fb$. Suppose that $\fb$ can be
generated by $b_1, \ldots, b_w$. Then it is clear that $H$ is
$\fb$-testable if and only if it is $b_i$-testable for all $i = 1,
\ldots, w$.

Many of the uses of this concept will be in tight closure theory, in
situations where the ideal $\fb$ has positive height. In this
context, it should be noted that an ideal $\fb$ of positive height
can be generated by (finitely many) elements of $\fb\cap R^{\circ}$:
see \cite[Lemma 3.4]{gatcti}.
\end{defi}

In this section, we shall make use of some facts about the behaviour
under ring homomorphisms of certain concepts from \cite{gatcti}.

\begin{prop}\label{aet.1} Let $\theta : R \lra R'$ be a homomorphism
of commutative Noetherian rings of characteristic $p$. Then $\theta$
induces a ring homomorphism $\widetilde{\theta} : R[x,f] \lra
R'[x,f]$ for which $\widetilde{\theta}\left(\sum_{i=0}^n
r_ix^i\right) = \sum_{i=0}^n \theta(r_i)x^i$ for all $n \in \nn$,
$r_0, \ldots, r_n \in R$.

Let $H'$ be a left $R'[x,f]$-module. Then $H'$ can be regarded as a
left $R[x,f]$-module by means of $\widetilde{\theta}$, and
\begin{enumerate}
\item $\mathcal{A}_{R[x,f]}(H') \subseteq
\mathcal{A}_{R'[x,f]}(H')$;
\item $\mathcal{G}_{R[x,f]}(H') =
\left\{\widetilde{\theta}^{-1}(\fB') : \fB' \in
\mathcal{G}_{R'[x,f]}(H')\right\}$; and
\item in the special case in which $R' = S^{-1}R$, where $S$ is a
multiplicatively closed subset of $R$ and $\theta$ is the natural
ring homomorphism $\xi : R \lra S^{-1}R$, then
$\mathcal{A}_{R[x,f]}(H') = \mathcal{A}_{(S^{-1}R)[x,f]}(H')$.
\end{enumerate}
\end{prop}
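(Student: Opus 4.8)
The plan is to establish the three parts of Proposition \ref{aet.1} in order, using the fact that $\widetilde{\theta}$ is a ring homomorphism which is homogeneous (of degree $0$) with respect to the gradings on $R[x,f]$ and $R'[x,f]$, and which maps the degree-$n$ component $Rx^n$ into $R'x^n$. The first observation to record is that, since $H'$ is a left $R'[x,f]$-module, restriction of scalars along $\widetilde{\theta}$ makes $H'$ a left $R[x,f]$-module, and that the two gradings (coming from the $R[x,f]$- and $R'[x,f]$-structures) agree because $\widetilde{\theta}$ preserves degrees; consequently any $R[x,f]$-submodule that is a special annihilator submodule in the $R[x,f]$-sense is in particular an $R'[x,f]$-submodule? — no, that needs care, so let me be precise about how each part goes.

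For part (i), I would take $N \in \mathcal{A}_{R[x,f]}(H')$, so $N = \ann_{H'}(\fB)$ for some graded two-sided ideal $\fB$ of $R[x,f]$. The key step is to show $N$ is an $R'[x,f]$-submodule of $H'$ and in fact a special annihilator submodule over $R'[x,f]$: one sets $\fB' := R'[x,f]\,\widetilde{\theta}(\fB)\,R'[x,f]$, the two-sided ideal of $R'[x,f]$ generated by the image of $\fB$; this is graded because $\fB$ is graded and $\widetilde{\theta}$ is homogeneous. I would then verify $\ann_{H'}(\fB') = \ann_{H'}(\fB) = N$: the inclusion $\ann_{H'}(\fB') \subseteq \ann_{H'}(\widetilde{\theta}(\fB)) = \ann_{H'}(\fB) = N$ (the middle equality since the $R[x,f]$-action of $b \in \fB$ on $H'$ is by $\widetilde{\theta}(b)$), and conversely any $h \in N$ is killed by $\widetilde{\theta}(\fB)$ hence by the two-sided ideal it generates in $R'[x,f]$, since $h$ is killed by everything in $\widetilde{\theta}(\fB)$ and the $R'[x,f]$-module axioms let left and right multiples act through. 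So $N = \ann_{H'}(\fB') \in \mathcal{A}_{R'[x,f]}(H')$.

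For part (ii), the inclusion $\supseteq$: given $\fB' \in \mathcal{G}_{R'[x,f]}(H')$, say $\fB' = \grann_{R'[x,f]}(N')$ for an $R'[x,f]$-submodule $N'$, I would check that $\widetilde{\theta}^{-1}(\fB')$ is a graded two-sided ideal of $R[x,f]$ (preimage of a graded ideal under a homogeneous ring map) annihilating $N'$ (viewed over $R[x,f]$), and then that it is the \emph{largest} such: if a graded two-sided ideal $\fD$ of $R[x,f]$ annihilates $N'$ then $\widetilde{\theta}(\fD)$ annihilates $N'$, so the two-sided ideal of $R'[x,f]$ it generates annihilates $N'$ and hence lies in $\fB' = \grann_{R'[x,f]}(N')$, giving $\fD \subseteq \widetilde{\theta}^{-1}(\fB')$. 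For $\subseteq$: given $\fB = \grann_{R[x,f]}(N)$ with $N$ an $R[x,f]$-submodule of $H'$, I would use part (i) to pass to a special annihilator description and then apply Lemma \ref{ga1.7(v)} over $R'[x,f]$ — concretely, replace $N$ by $\ann_{H'}(\fB)$ (same $R[x,f]$-submodule, by \ref{ga1.7(v)} applied over $R[x,f]$), which by (i) equals $\ann_{H'}(\fB')$ for the graded ideal $\fB' := R'[x,f]\widetilde{\theta}(\fB)R'[x,f]$; then $\grann_{R'[x,f]}(\ann_{H'}(\fB'))$ is a graded ideal of $R'[x,f]$ whose preimage under $\widetilde{\theta}$ I must show equals $\fB$. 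I expect this bookkeeping — tracking which of the two ``$\grann$'' operators is meant and checking a preimage coincides exactly with $\fB$ rather than merely containing it — to be the main obstacle; the containment $\fB \subseteq \widetilde{\theta}^{-1}(\grann_{R'[x,f]}(\cdots))$ is easy, and the reverse requires that no element of $R[x,f]\setminus\fB$ maps into the $R'[x,f]$-graded-annihilator, which follows because $\fB = \grann_{R[x,f]}(H')$-computations are compatible with the fact that the $R[x,f]$-action factors through $\widetilde{\theta}$.

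For part (iii), where $R' = S^{-1}R$ and $\theta = \xi$, I already have $\mathcal{A}_{R[x,f]}(H') \subseteq \mathcal{A}_{(S^{-1}R)[x,f]}(H')$ from (i), so only the reverse inclusion remains. The plan is: given $N' = \ann_{H'}(\fB')$ for a graded two-sided ideal $\fB'$ of $(S^{-1}R)[x,f]$, observe that $(S^{-1}R)[x,f] = S^{-1}(R[x,f])$ as a ring of fractions, so $\fB'$ is the extension of its contraction, $\fB' = S^{-1}(\widetilde{\xi}^{-1}(\fB'))\cdot(S^{-1}R)[x,f]$; hence for $h \in H'$, $\fB' h = 0$ iff $\widetilde{\xi}^{-1}(\fB')\,h = 0$ (since clearing denominators is a unit action on $H'$, which is already an $S^{-1}R$-module). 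Therefore $N' = \ann_{H'}(\widetilde{\xi}^{-1}(\fB'))$ with $\widetilde{\xi}^{-1}(\fB')$ a graded two-sided ideal of $R[x,f]$, so $N' \in \mathcal{A}_{R[x,f]}(H')$. I would close by remarking that this uses nothing beyond the localization being a flat ring map and $H'$ carrying an $S^{-1}R$-module structure on which elements of $S$ act invertibly.
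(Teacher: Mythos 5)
Your overall strategy matches the paper's (decompose the graded ideal, extend the ideal along $\theta$, compare annihilators, and for (ii) use (i) together with Lemma \ref{ga1.7(v)}), but there are two places where the argument is not yet sound as written.

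First, in part (i) you assert that since $h$ is killed by $\widetilde{\theta}(\fB)$, it is killed by the two-sided ideal $\fB' := R'[x,f]\,\widetilde{\theta}(\fB)\,R'[x,f]$ because ``left and right multiples act through.'' Left multiples do preserve annihilation (if $th = 0$ then $\alpha t h = 0$), but right multiples do not: from $th = 0$ one cannot conclude $t\beta h = 0$, since $\beta h$ need not be killed by $t$. The reason the conclusion nevertheless holds here is structural: writing $\fB = \bigoplus_{i}\fb_i x^i$ (so $(\fb_i)$ is an ascending chain of ideals of $R$), the two-sided ideal of $R'[x,f]$ generated by $\widetilde{\theta}(\fB)$ is already equal to the left $R'$-span, namely $\bigoplus_i (\fb_i)^e x^i$, because this left span is itself a graded two-sided ideal of $R'[x,f]$. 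Each element of $(\fb_i)^e x^i$ is an $R'$-linear combination of elements of $\theta(\fb_i)x^i$, which kill $h$, so left-multiplication suffices. This is exactly what the paper's degree-by-degree computation gives you for free; your ``two-sided ideal generated'' description needs the extra observation that it collapses to the left ideal generated.

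Second, and more seriously, your proof of the inclusion $\mathcal{G}_{R[x,f]}(H') \subseteq \{\widetilde{\theta}^{-1}(\fB')\}$ in part (ii) stops at identifying the crux (``the reverse requires that no element of $R[x,f]\setminus\fB$ maps into the $R'[x,f]$-graded-annihilator'') and then hand-waves with ``which follows because \ldots computations are compatible.'' That is not a proof. The clean way to close it, and the route the paper takes, is: once part (i) tells you that $G := \ann_{H'}(\fB)$ lies in $\mathcal{A}_{R'[x,f]}(H')$ and is therefore an $R'[x,f]$-submodule of $H'$, you may apply the $\supseteq$ direction you have already established (which, stated cleanly, says $\grann_{R[x,f]}(N') = \widetilde{\theta}^{-1}(\grann_{R'[x,f]}(N'))$ for every $R'[x,f]$-submodule $N'$ of $H'$) to $N' = G$. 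Combined with $\grann_{R[x,f]}(G) = \fB$ from Lemma \ref{ga1.7(v)}, this gives $\fB = \widetilde{\theta}^{-1}(\grann_{R'[x,f]}G)$ at once. You have all the pieces; you just did not assemble them. Part (iii) is fine (the remark about $(S^{-1}R)[x,f]$ being a localization of the noncommutative ring $R[x,f]$ is unnecessary and delicate — $S$ is not central — but you only really use that each graded component of $\fB'$ is extended from its contraction, which is the paper's point as well).
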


\begin{proof} Let $\phantom{\fa}^e$ stand for extension of ideals
with respect to the ring homomorphism $\theta:R \lra R'$.

(i), (iii) Let $G \in \mathcal{A}_{R[x,f]}(H')$. Thus there exists a
graded two-sided ideal $\fB$ of $R[x,f]$ such that $G =
\ann_{H'}(\fB)$. We can write $\fB = \bigoplus_{i \in \nn}\fb_ix^i$
for a suitable ascending chain $(\fb_i)_{i\in \nn}$ of ideals of
$R$. Thus an element $h' \in H'$ belongs to $G$ if and only if, for
all $i \in \nn$ and all $r_i \in \fb_i$, we have $r_ix^ih' = 0$,
that is, if and only if, for all $i \in \nn$ and all $r_i \in
\fb_i$, we have $\theta(r_i)x^ih' = 0$. Hence
$$
G = \ann_{H'}\left({\textstyle \bigoplus_{i \in
\nn}(\fb_i)^ex^i}\right) \in \mathcal{A}_{R'[x,f]}(H').
$$

Now consider the special case in which $R' = S^{-1}R$ and $\theta$
is the natural ring homomorphism $\xi$. Let $G' \in
\mathcal{A}_{(S^{-1}R)[x,f]}(H')$. Thus there exists a graded
two-sided ideal $\fB'$ of $(S^{-1}R)[x,f]$ such that $G' =
\ann_{H'}(\fB')$. The fact that every ideal of $S^{-1}R$ is extended
from its own contraction to $R$ means that there is an ascending
chain $(\fb_i)_{i\in \nn}$ of ideals of $R$ such that $\fB' =
\bigoplus_{i \in \nn}(\fb_i)^ex^i$, and the argument in the
immediately preceding paragraph shows that
$$
\ann_{H'}\left({\textstyle \bigoplus_{i \in \nn}\fb_ix^i}\right) =
\ann_{H'}\left({\textstyle \bigoplus_{i \in \nn}(\fb_i)^ex^i}\right)
= \ann_{H'}(\fB') = G'.
$$
Therefore $G' \in \mathcal{A}_{R[x,f]}(H')$.

(ii) Let $\fB' \in \mathcal{G}_{R'[x,f]}(H')$, so that $\fB' =
\grann_{R'[x,f]}(G')$ for some $R'[x,f]$-submodule $G'$ of $H'$. Now
$G'$ is automatically an $R[x,f]$-submodule of $H'$, and it is
routine to check that $\grann_{R[x,f]}(G') =
\widetilde{\theta}^{-1}(\fB')$.  Thus we have proved that
$$
\mathcal{G}_{R[x,f]}(H') \supseteq
\left\{\widetilde{\theta}^{-1}(\fB') : \fB' \in
\mathcal{G}_{R'[x,f]}(H')\right\}.
$$

To prove the reverse inclusion, let $\fB \in
\mathcal{G}_{R[x,f]}(H')$. Then, by Lemma \ref{ga1.7(v)}, we have
$\fB = \grann_{R[x,f]}G$, where $G = \ann_{H'}(\fB) \in
\mathcal{A}_{R[x,f]}(H')$. By part (i), $G \in
\mathcal{A}_{R'[x,f]}(H')$, and so $G$ is an $R'[x,f]$-submodule of
$H'$. Hence, by the first paragraph of this proof of part (ii),
$$\fB = \grann_{R[x,f]}G = \widetilde{\theta}^{-1}\left(\grann_{R'[x,f]}G\right) \in
\left\{\widetilde{\theta}^{-1}(\fB') : \fB' \in
\mathcal{G}_{R'[x,f]}(H')\right\}.$$
\end{proof}

\begin{ntn}
\label{fp.3} For the remainder of this section, $I$ will denote a
non-empty, but possibly infinite, indexing set, we shall set $R_i :=
R$ for all $i \in I$, and we shall use $V$ to denote the free
$R$-module $\bigoplus_{i\in I}R_i$. We shall frequently use notation
like `$(r_i)_{i\in I}$' to denote an element of $V$; when we do, it
is of course to be understood that $r_i \neq 0$ for only finitely
many $i \in  I$.

We refer to the mapping $f : V \lra V$ for which $f((r_i)_{i\in I})
= (r_i^p)_{i\in I}$ for all $(r_i)_{i\in I} \in V$ as the {\em
Frobenius map}.
\end{ntn}

\begin{lem}\label{fp.gatcti2.8} Let $b \in \N$ and $W = \bigoplus_{n\geq b}W_n$ be a $\Z$-graded
left $R[x,f]$-module; let $(g_i)_{i\in I}$ be a family of arbitrary
elements of $W_b$ (so that infinitely many of them could be
non-zero), indexed by the set $I$. Use the notation of\/ {\rm
\ref{fp.3}}. Set
$$
K := \left\{ (r_i)_{i\in I} \in V : {\textstyle \sum_{i\in I}r_ig_i
= 0}\right\},
$$
an $R$-submodule of $V$.

Then there is a graded left $R[x,f]$-module
\[
W' = \bigoplus_{n\geq b-1}W'_n = \left(V/f^{-1}(K)\right)\oplus W_b
\oplus W_{b+1} \oplus \cdots \oplus W_t \oplus \cdots
\]
(so that $W'_{b-1} = V/f^{-1}(K)$ and $W'_n = W_n$ for all $n \geq
b$) which has $W$ as an $R[x,f]$-submodule and for which
$x((r_i)_{i\in I} + f^{-1}(K)) = \sum_{i\in I}r_i^pg_i$ for all
$(r_i)_{i\in I} \in V$. We call $W'$ the\/ {\em $1$-place extension
of $W$ by $(g_i)_{i\in I}$}, and denote it by $\ext(W;(g_i)_{i\in
I};1)$.

Let $a \in R$. If $W$ is $a$-testable, then so too is
$\ext(W;(g_i)_{i\in I};1).$

If $W$ is $x$-torsion-free, then so too is $\ext(W;(g_i)_{i\in
I};1)$, and then $$\mathcal{G}(\ext(W;(g_i)_{i\in I};1)) =
\mathcal{G}(W)$$ and $\mathcal{I}(\ext(W;(g_i)_{i\in I};1)) =
\mathcal{I}(W).$
\end{lem}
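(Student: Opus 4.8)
The plan is to verify each of the four assertions in Lemma~\ref{fp.gatcti2.8} in turn, treating the construction of $W'$ as the first order of business. For the construction: I would define $\psi : V \lra W_b$ by $\psi((r_i)_{i\in I}) = \sum_{i\in I} r_i g_i$, so that $K = \Ker\psi$ and $V/K \cong \Ima\psi \subseteq W_b$ as $R$-modules. The prescribed formula $x((r_i)_{i\in I} + f^{-1}(K)) = \sum_{i\in I} r_i^p g_i$ should be checked to be well-defined: if $(r_i)_{i\in I} - (s_i)_{i\in I} \in f^{-1}(K)$, then $(r_i^p - s_i^p)_{i\in I} = f((r_i - s_i)_{i\in I}) \in K$, so $\sum r_i^p g_i = \sum s_i^p g_i$; and this map respects the $R$-action with the required twist, since $x(r\cdot v) $ should equal $r^p \cdot (xv)$, which matches because $(r r_i)^p = r^p r_i^p$. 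Then $W'_{b-1} := V/f^{-1}(K)$ with this $x$-action into $W'_b = W_b$, together with the unchanged components $W'_n = W_n$ for $n \geq b$ and the original $x$-maps among them, assembles into a $\Z$-graded left $R[x,f]$-module containing $W$ as the $R[x,f]$-submodule $\bigoplus_{n\geq b} W_n$. (One has to confirm there is no compatibility condition to check in degree $b$ itself, since $x$ maps $W_{b-1}'$ into $W_b$ and $W$ is already a module, so the only new data is the single map $W'_{b-1} \to W'_b$.)

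Next, $a$-testability. Suppose $W$ is $a$-testable, and let $h \in W'$, $c \in R^{\circ}$ with $c x^n h = 0$ for all $n \gg 0$. Write $h = h_{b-1} + h'$ where $h_{b-1} \in W'_{b-1}$ and $h' \in W$ (the higher-degree part); testability is a statement about each homogeneous component, so it suffices to treat $h$ homogeneous. If $h \in W_n$ for some $n \geq b$, then $c x^n h = 0$ for $n \gg 0$ inside $W$, and $W$ is $a$-testable, so $a x^n h = 0$ for all $n \geq 0$. If $h = h_{b-1} \in W'_{b-1}$, then $x h_{b-1} \in W_b$, so $c x^{n} (x h_{b-1}) = 0$ for $n \gg 0$ gives, by $a$-testability of $W$ applied to $x h_{b-1} \in W_b \subseteq W$, that $a x^n (x h_{b-1}) = 0$ for all $n \geq 0$; this handles $a x^n h_{b-1}$ for $n \geq 1$. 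For $n = 0$ one needs $a h_{b-1} = 0$: here I would argue that $c x^n h_{b-1} = 0$ for $n \gg 0$, and since $W'$ is built so that $x$ restricted to $W'_{b-1}$ is injective on $\Ima\psi$'s preimage... actually the cleaner route is that the definition of $a$-testable only constrains $a x^n h$ for $n \geq 0$, and $a x^0 h_{b-1} = a h_{b-1}$ lies in $W'_{b-1} = V/f^{-1}(K)$; applying $x$ gives $x(a h_{b-1}) = a^p \cdot x h_{b-1} = 0$ by the previous step, but $x$ need not be injective, so instead I would use that $a h_{b-1}$ is already forced to be zero because... hmm. Let me reconsider: this is exactly the subtle point and parallels \cite[Lemma 2.8]{gatcti}; I expect the argument there handles the degree-$b-1$ part by noting $c x^{n}h_{b-1}=0$ for $n\gg0$ with $n\geq 1$ means $c x^{n-1}(xh_{b-1})=0$, apply $W$-testability to get $a x^{n-1}(xh_{b-1})=0$ for all $n-1\geq 0$, i.e. $ax^n h_{b-1}=0$ for all $n\geq 1$, and for $n=0$, note that $h_{b-1}$ with $cx^nh_{b-1}=0$ for $n\gg 0$ is $x$-torsion in $W'$ modulo multiplication by $c$; since there is no further constraint forcing $ah_{b-1}=0$ in general, the definition must be read so that one separately knows $a \in (0:_R W'_{b-1})$ when needed, OR (more likely) the intended reading is that the relevant $a$ in applications kills $W'_{b-1}$ anyway. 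I would follow the proof of \cite[Lemma 2.8]{gatcti} verbatim for this point.

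For the $x$-torsion-free claim: assume $W$ is $x$-torsion-free and suppose $0 \neq h \in \Gamma_x(W')$. Again reduce to $h$ homogeneous. If $h \in W_n$, $n \geq b$, then $h \in \Gamma_x(W) = 0$, contradiction. If $h = (r_i)_{i\in I} + f^{-1}(K) \in W'_{b-1}$ with $x^m h = 0$ for some $m \geq 1$, then $x^{m-1}(xh) = 0$ with $xh = \sum r_i^p g_i \in W_b \subseteq W$, forcing $xh = 0$ by $x$-torsion-freeness of $W$ (if $m \geq 2$; if $m = 1$ it is immediate); so $\sum r_i^p g_i = 0$, i.e. $f((r_i)_{i\in I}) \in K$, i.e. $(r_i)_{i\in I} \in f^{-1}(K)$, i.e. $h = 0$ --- contradiction. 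Hence $W'$ is $x$-torsion-free. Finally, for the equalities $\mathcal{G}(W') = \mathcal{G}(W)$ and $\mathcal{I}(W') = \mathcal{I}(W)$: since $\mathcal{I}$ is determined by $\mathcal{G}$ via intersection with $R$ (as recorded after Lemma~\ref{ga1.7(v)}), it suffices to prove $\mathcal{G}(W') = \mathcal{G}(W)$, and by Lemma~\ref{ga1.7(v)} applied to both $W'$ and $W$ this amounts to exhibiting a bijection between $\mathcal{A}(W')$ and $\mathcal{A}(W)$ compatible with graded annihilators. The key observation is that for any graded two-sided ideal $\fB = \bigoplus_{n\in\nn}\fb_n x^n$ of $R[x,f]$, the special annihilator submodule $\ann_{W'}(\fB)$ contains $W'_{b-1}$ if and only if $\fb_0 \cdot W'_{b-1} = 0$ and $\fb_n x^n W'_{b-1} = 0$ for $n\geq 1$; but $x W'_{b-1} \subseteq W_b$, so for $n \geq 1$ the condition $\fb_n x^n W'_{b-1} = 0$ reads $\fb_n x^{n-1}(W_b) \supseteq \fb_n x^{n-1}(xW'_{b-1})$... the cleanest formulation, which I expect to be the real content and the main obstacle, is to show that intersection with $W$ gives a bijection $\mathcal{A}(W') \to \mathcal{A}(W)$, $N' \mapsto N' \cap W$, with inverse $N \mapsto \ann_{W'}(\grann_{R[x,f]} N)$, and that $\grann_{R[x,f]}(N' \cap W) = \grann_{R[x,f]} N'$; the nontrivial direction is that a graded ideal $\fB$ annihilating $N' \cap W$ (a submodule of $W$) automatically annihilates all of $N'$ including any degree-$(b-1)$ part --- this works because if $h \in N'_{b-1}$ then $\fB_{\geq 1} h \subseteq W$ lies in $N' \cap W$ and is killed, handling $x^n h$ for $n \geq 1$, while $\fb_0 h = 0$ because $\fb_0$ must kill $x h \in N'\cap W$ hence (by $x$-torsion-freeness of $W'$, just established) ... again the degree-zero piece is the delicate point. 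I would structure this final part by first proving $\mathcal{A}(W') = \{N' : N' \cap W \in \mathcal{A}(W)\}$ with matching graded annihilators, leaning on Lemma~\ref{ga1.7(v)} and the $x$-torsion-freeness, mirroring the corresponding passage in \cite{gatcti}; I expect the verification that the degree-$(b-1)$ component causes no discrepancy in $\grann$ to be where all the actual work lies.
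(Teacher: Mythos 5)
Your construction of $W'$, your well-definedness check, and your proof that $W'$ is $x$-torsion-free when $W$ is are all correct and essentially the paper's argument. The genuine gap is exactly where you acknowledge losing the thread: the $a$-testability claim for $w \in W'_{b-1}$, in the degree-zero case $aw = 0$. You cannot defer to \cite[Lemma 2.8]{gatcti} here --- the paper explicitly singles out the $a$-testability statement as the \emph{one} claim of this lemma that is \emph{not} handled there, and it is the only part of the lemma for which the paper writes out a detailed argument.

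The fix is already in your hands, hidden in your own torsion-freeness paragraph. By construction, the map $x\colon W'_{b-1}=V/f^{-1}(K)\to W_b$ sends $(r_i)_{i\in I}+f^{-1}(K)$ to $\psi(f((r_i)_{i\in I}))$, where $\psi\colon V\to W_b$ is the tautological map with $K=\Ker\psi$; so its kernel on $V$ is exactly $f^{-1}(K)$, and the induced map on $V/f^{-1}(K)$ is \emph{injective}. (This is precisely what you use, without naming it, to show $x^m h = 0$ forces $h=0$ in $W'_{b-1}$.) Now argue: since $cx^n(xw)=0$ for $n\gg 0$ and $xw\in W_b\subseteq W$ with $W$ $a$-testable, you get $ax^n(xw)=0$ for all $n\ge 0$, in particular $a(xw)=0$; then $x(aw)=a^p(xw)=a^{p-1}\cdot a(xw)=0$, and by the injectivity of $x$ on $W'_{b-1}$ this gives $aw=0$. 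Your hesitation (``but $x$ need not be injective'') is the one point where you went wrong, and it is precisely where the paper's proof concentrates its effort. The $\mathcal{G}$ and $\mathcal{I}$ equalities are treated by both you and the paper as a deferral to \cite[Lemma 2.8]{gatcti}, so no discrepancy there, though your sketch is somewhat more tentative than is ideal.
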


\begin{proof} All the claims in this Lemma, except the one that
$W' := \ext(W;(g_i)_{i\in I};1)$ is $a$-testable if $W$ is, can be
proved in a manner entirely similar to that used to prove
\cite[Lemma 2.8]{gatcti}.

So suppose that $W$ is $a$-testable. To show that $W'$ is
$a$-testable, it is enough for us to show that, whenever $w$ is a
homogeneous element of $W'$ and $c \in R^{\circ}$ are such that
$cx^nw = 0$ for all $n \gg 0$, then $w$ is annihilated by $Ra
R[x,f]$. This is immediate from the fact that $W$ is $a$-testable
when $w$ has degree at least $b$. We therefore suppose that $w \in
W'_{b-1}$, say $w = (r_i)_{i\in I} + f^{-1}(K)$ for some
$(r_i)_{i\in I} \in V$. Since $cx^n(xw) = 0$ for all $n \gg 0$ and
$xw \in W_b$, it follows that $xw$ is annihilated by $Ra R[x,f]$;
therefore $a x^nw = 0$ for all $n\geq 1$. It remains to show that
$w$ is annihilated by $a$.

Now $xaw = a^pxw = 0$. Therefore
$$
0  = xaw = x((ar_i)_{i\in I} + f^{-1}(K)) = \sum_{i\in
I}a^pr_i^pg_i,
$$
so that $(ar_i)_{i\in I} \in f^{-1}(K)$ and $aw = 0$. Therefore $w$
is annihilated by $a$, and so by $Ra R[x,f]$.
\end{proof}

\begin{defi}\label{fp.gatcti2.10} Let $b \in \N$ and $W = \bigoplus_{n\geq b}W_n$ be a $\Z$-graded
left $R[x,f]$-module, and let $(g_i)_{i\in I}$ be a family of
arbitrary elements of $W_b$, as in Lemma \ref{fp.gatcti2.8}. The
$1$-place extension $\ext(W;(g_i)_{i\in I};1)$ of $W$ by
$(g_i)_{i\in I}$ was defined in Lemma \ref{fp.gatcti2.8}. Recall
that, with the notation of \ref{fp.3}, we defined
$$K := \left\{ (r_i)_{i\in I} \in V : {\textstyle \sum_{i\in I}r_ig_i
= 0}\right\}.
$$

Now let $h \in \N$ with $h \geq 2$. The {\em $h$-place extension
$\ext(W;(g_i)_{i\in I};h)$ of $W$ by $(g_i)_{i\in I}$\/} is the
graded left $R[x,f]$-module
\[ \left(V/f^{-h}(K)\right)\oplus \cdots \oplus
\left(V/f^{-1}(K)\right)\oplus W_b \oplus  \cdots \oplus W_t \oplus
\cdots
\]
which has $\ext(W;(g_i)_{i\in I};1)$ as a graded $R[x,f]$-submodule
and is such that $$x(v+ f^{-j}(K)) = f(v)+ f^{-(j-1)}(K) \quad
\text{for all~} v \in V \text{~and~} j = h,h-1, \ldots, 2.
$$

For each $j \in I$, let $e_j$ denote the element $(r_i)_{i\in I}$ of
$V$ for which $r_j = 1$ and $r_i = 0$ for all $i \in I \setminus
\{j\}$. It is straightforward to check that
$$
\ext(W;(g_i)_{i\in I};h) = \ext(\ext(W;(g_i)_{i\in
I};1);(\overline{e_i})_{i\in I};h-1),$$ where, for $v \in V$, we use
$\overline{v}$ to denote $v + f^{-1}(K)$, and
$$
\ext(W;(g_i)_{i\in I};h) = \ext(\ext(W;(g_i)_{i\in
I};h-1);(\widetilde{e_i})_{i\in I};1)$$ where, for $v \in V$, we use
$\widetilde{v}$ to denote $v + f^{-(h-1)}(K)$.

Let $a \in R$. It is a consequence of Lemma \ref{fp.gatcti2.8} that,
if $W$ is $a$-testable, then so to is $\ext(W;(g_i)_{i\in I};h)$.

It is also a consequence of Lemma \ref{fp.gatcti2.8} that, if $W$ is
$x$-torsion-free, then so too is $\ext(W;(g_i)_{i\in I};h)$, and
then $\mathcal{G}(\ext(W;(g_i)_{i\in I};h)) = \mathcal{G}(W)$ and
$$\mathcal{I}(\ext(W;(g_i)_{i\in I};h)) = \mathcal{I}(W).$$

Finally, it will occasionally be convenient to extend the
terminology and regard $W$ itself as a $0$-place extension of $W$.
\end{defi}

\begin{prop}\label{fp.gatcti2.11} Let $b \in \N$ and $W = \bigoplus_{n\geq b}W_n$ be a $\Z$-graded
left $R[x,f]$-module. Use the notation of\/ {\rm \ref{fp.3}}. Let
$\{m_i : i \in I \}$ be a generating set for an $R$-module $M$. We
can form the graded $R[x,f]$-submodule $\bigoplus_{n \geq
b}(Rx^n\otimes_RM)$ of $R[x,f]\otimes_RM$. Suppose that there is
given a homogeneous $R[x,f]$-homomorphism $\lambda' =
\bigoplus_{n\geq b}\lambda_n : \bigoplus_{n \geq b}(Rx^n\otimes_RM)
\lra W$.

For each $i \in I$, let $g_i := \lambda_b(x^b\otimes m_i) \in W_b$.
Set
$$K := \left\{ (r_i)_{i\in I} \in V : {\textstyle \sum_{i\in I}r_ig_i
= 0}\right\},
$$ as in Lemma\/ {\rm \ref{fp.gatcti2.8}}. For each $n = 0, 1, \ldots, b-1$,
there exists an $R$-homomorphism $\lambda_n : Rx^n \otimes_R M \lra
V/f^{-(b-n)}(K)$ such that
$$
\lambda_n\left({\textstyle \sum_{i\in I} r_ix^n\otimes m_i}\right) =
(r_i)_{i\in I} + f^{-(b-n)}(K) \quad \text{for all~} (r_i)_{i\in I}
\in V.
$$
Furthermore, $$ \lambda := \bigoplus_{n\in \nn}\lambda_n :
R[x,f]\otimes_RM = \bigoplus_{n\in \nn}(Rx^n\otimes_RM) \lra
\ext(W;(g_i)_{i\in I};b)
$$
is a homogeneous $R[x,f]$-homomorphism that extends $\lambda'$.
\end{prop}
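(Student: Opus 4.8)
The plan is to define each $\lambda_n$ for $n = 0, 1, \ldots, b-1$ by the displayed formula and check three things in turn: that the formula gives a well-defined map, that it is an $R$-homomorphism, and that the resulting $\lambda = \bigoplus_{n\in\nn}\lambda_n$ is $R[x,f]$-linear. First I would address well-definedness of $\lambda_n$. An element of $Rx^n\otimes_R M$ can be written $\sum_{i\in I} r_i x^n\otimes m_i$ (using that $\{m_i\}$ generates $M$), but not uniquely; so I must show that if $\sum_{i\in I} r_i x^n\otimes m_i = 0$ in $Rx^n\otimes_R M$ then $(r_i)_{i\in I}\in f^{-(b-n)}(K)$, i.e.\ $(r_i^{p^{b-n}})_{i\in I}\in K$, i.e.\ $\sum_{i\in I} r_i^{p^{b-n}} g_i = 0$ in $W_b$. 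The natural way to see this is to apply $\lambda_b$ (a genuine, hence well-defined, map) to the image of $\sum_i r_i x^n\otimes m_i$ under the $R[x,f]$-action of $x^{b-n}$: since $\lambda'$ is an $R[x,f]$-homomorphism, $\lambda_b\bigl(x^{b-n}\cdot(\sum_i r_i x^n\otimes m_i)\bigr) = x^{b-n}\lambda_n(\ldots)$ won't quite work because $\lambda_n$ is what we are trying to define, so instead I compute directly: $x^{b-n}\cdot(r_i x^n\otimes m_i) = r_i^{p^{b-n}} x^b\otimes m_i$ inside $R[x,f]\otimes_R M$, hence $x^{b-n}\cdot(\sum_i r_i x^n\otimes m_i) = \sum_i r_i^{p^{b-n}} x^b\otimes m_i$, and applying the well-defined $\lambda_b$ and using $\lambda_b(x^b\otimes m_i) = g_i$ gives $\sum_i r_i^{p^{b-n}} g_i = \lambda_b(0) = 0$, as required. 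Additivity and $R$-linearity of $\lambda_n$ are then immediate from the shape of the formula, noting that scaling $(r_i)_{i\in I}$ by $r\in R$ on the left corresponds to replacing it by $(rr_i)_{i\in I}$, which is compatible with passage to the quotient $V/f^{-(b-n)}(K)$.

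Next I would verify that $\lambda := \bigoplus_{n\in\nn}\lambda_n$ is a homogeneous $R[x,f]$-homomorphism. Homogeneity of degree $0$ is built in. $R$-linearity has just been checked componentwise. It remains to check compatibility with multiplication by $x$, i.e.\ $\lambda(x\cdot\xi) = x\cdot\lambda(\xi)$ for $\xi\in Rx^n\otimes_R M$, and it suffices to do this on generators $\xi = r x^n\otimes m_i$. There are three regimes. For $n\geq b$, this holds because $\lambda' = \bigoplus_{n\geq b}\lambda_n$ is already given to be an $R[x,f]$-homomorphism. For $0\leq n\leq b-2$ we have $x\cdot(r x^n\otimes m_i) = r^p x^{n+1}\otimes m_i$ with $n+1\leq b-1$, so the left side is $(\ldots, r^p, \ldots) + f^{-(b-n-1)}(K)$ (the tuple with $r^p$ in slot $i$), while the right side is $x\cdot\bigl((\ldots, r, \ldots)+f^{-(b-n)}(K)\bigr) = f((\ldots,r,\ldots)) + f^{-(b-n-1)}(K) = (\ldots, r^p, \ldots)+f^{-(b-n-1)}(K)$, using the description of the $x$-action on $\ext(W;(g_i)_{i\in I};b)$ between the lower components recorded in Definition \ref{fp.gatcti2.10}; these agree. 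The one remaining case is the junction $n = b-1$: here $x\cdot(r x^{b-1}\otimes m_i) = r^p x^b\otimes m_i$, whose image under $\lambda$ is $\lambda_b(r^p x^b\otimes m_i) = r^p g_i$, while $x\cdot\lambda_{b-1}(r x^{b-1}\otimes m_i) = x\cdot\bigl((\ldots,r,\ldots)+f^{-1}(K)\bigr) = \sum_{j} (\text{slot-}j)^p g_j = r^p g_i$ by the defining formula $x((r_i)_{i\in I}+f^{-1}(K)) = \sum_{i\in I} r_i^p g_i$ from Lemma \ref{fp.gatcti2.8} (this is precisely the $W'_{b-1}\to W_b$ part of the $x$-action on the $1$-place extension sitting inside the $b$-place extension). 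So the two sides agree in all cases, and $\lambda$ is $R[x,f]$-linear.

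Finally, that $\lambda$ extends $\lambda'$ is clear: for $n\geq b$ the component $\lambda_n$ of $\lambda$ is, by construction, the given component $\lambda_n$ of $\lambda'$.

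I expect the only genuinely substantive point to be the well-definedness of $\lambda_n$ for $n<b$ — everything else is formula-chasing — and within that, the crux is the identity $x^{b-n}\cdot(r_i x^n\otimes m_i) = r_i^{p^{b-n}}(x^b\otimes m_i)$ in $R[x,f]\otimes_R M$ together with the already-available well-definedness of the top map $\lambda_b$, which together force the relation $\sum_i r_i^{p^{b-n}} g_i = 0$ that defines $f^{-(b-n)}(K)$. The match-up at the junction $n=b-1$ is the other place where one must be careful to invoke the correct piece of the $x$-action coming from Lemma \ref{fp.gatcti2.8}, but no real difficulty arises there.
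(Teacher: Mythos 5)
Your proposal is correct and takes essentially the same approach as the paper: the key step in both is to show well-definedness of $\lambda_n$ for $n<b$ by multiplying a vanishing linear combination by $x^{b-n}$ inside $R[x,f]\otimes_R M$ and then applying the already-defined $\lambda_b$ to force membership in $f^{-(b-n)}(K)$ (the paper phrases this with the difference of two representing tuples, you with a single tuple mapping to zero, but these are equivalent). The paper then dismisses the remaining verifications as "easy to check," whereas you carry them out explicitly, including the correct three-regime check of $x$-compatibility at degrees $n\leq b-2$, $n=b-1$, and $n\geq b$.
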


\begin{proof} Let $n \in \{0,\ldots,b-1\}$. Note that the $R$-module
$Rx^n\otimes_RM$ is generated by $\{x^n\otimes m_i : i \in I\}$. Now
if $(r_i)_{i\in I}, (s_i)_{i\in I} \in V$ are such that $\sum_{i\in
I}r_ix^n\otimes m_i = \sum_{i\in I}s_ix^n\otimes m_i$, then
multiplication of both sides on the left by $x^{b-n}$ yields that
$\sum_{i\in I}r_i^{p^{b-n}}x^b\otimes m_i = \sum_{i\in
I}s_i^{p^{b-n}}x^b\otimes m_i$. Apply the $R$-homomorphism
$\lambda_b$ to deduce that
\begin{align*}
\sum_{i\in I}(r_i - s_i)^{p^{b-n}}g_i & = \sum_{i\in
I}(r_i^{p^{b-n}} - s_i^{p^{b-n}})\lambda_b(x^b\otimes m_i)\\ &=
\lambda_b \left(\sum_{i\in I}r_i^{p^{b-n}}x^b\otimes m_i -
\sum_{i\in I}s_i^{p^{b-n}}x^b\otimes m_i\right) = \lambda_b(0 ) = 0.
\end{align*}
Therefore $(r_i - s_i)_{i\in I} \in f^{-(b-n)}(K)$, and $(r_i)_{i\in
I} + f^{-(b-n)}(K) = (s_i)_{i\in I} + f^{-(b-n)}(K)$. After this,
all the remaining claims are easy to check.
\end{proof}

\begin{rmds}
\label{fp.7} We shall need to use two constructions from
\cite[\S2]{gatcti}, and we include here reminders for the reader's
convenience.

\begin{enumerate} \item Let
$\left(H^{(\lambda)}\right)_{\lambda\in\Lambda}$ be a non-empty
family of $\Z$-graded left $R[x,f]$-modules, with gradings given by
$H^{(\lambda)}= \bigoplus_{n\in\Z}H^{(\lambda)}_n$ for each
$\lambda\in\Lambda$. The $R$-module
\[
\prod_{\lambda\in\Lambda}{\textstyle ^{^{^{\Large
\prime}}}}H^{(\lambda)} :=
\bigoplus_{n\in\Z}\left(\prod_{\lambda\in\Lambda}H^{(\lambda)}_n\right)
\]
is a ($\Z$-graded) left $R[x,f]$-module in which
\[x\big(h^{(\lambda)}_n\big)_{\lambda\in\Lambda} =
\big(xh^{(\lambda)}_n\big)_{\lambda\in\Lambda}\in
\prod_{\lambda\in\Lambda}H^{(\lambda)}_{n+1} \quad \mbox{for all~}
\big(h^{(\lambda)}_n\big)_{\lambda\in\Lambda} \in
\prod_{\lambda\in\Lambda}H^{(\lambda)}_n.\] In this paper, we shall
refer to $\prod^{\prime}_{\lambda\in\Lambda}H^{(\lambda)}$ as the
{\em graded product\/} of the $H^{(\lambda)}$.

Let $a \in R$. It is clear that if $H^{(\lambda)}$ is $a$-testable
for all $\lambda\in\Lambda$, then
$\prod^{\prime}_{\lambda\in\Lambda}H^{(\lambda)}$ is also
$a$-testable.
\item Let $H$ be a left $R[x,f]$-module.
For all $n \in \nn$, set $H_n := H$. Then the $R$-module
$\widetilde{H} := \bigoplus_{n\in\nn} H_n$ has a natural structure
as a graded left $R[x,f]$-module under which the result of
multiplying $h_n \in H_n = H$ on the left by $x$ is the element
$xh_n \in H_{n+1} = H$. In this paper, we shall refer to
$\widetilde{H}$ as the {\em graded companion\/} of $H$.

Let $a \in R$. It is clear that if $H$ is $a$-testable, then so too
is $\widetilde{H}$.
\item Let $\left(L^{(\lambda)}\right)_{\lambda\in\Lambda}$ be a non-empty
family of (ungraded) left $R[x,f]$-modules. Then it is easy to check
that
$$
\prod_{\lambda\in\Lambda}{\textstyle ^{^{^{\Large
\prime}}}}\widetilde{L^{(\lambda)}} = \widetilde{
\prod_{\lambda\in\Lambda}L^{(\lambda)}}.
$$
Thus, speaking loosely, the operations of taking products and graded
companions commute.
\end{enumerate}
\end{rmds}

\begin{lem}
\label{fp.7z} Let $G := \bigoplus_{n\in\nn} G_n$ be a positively
graded $x$-torsion-free left $R[x,f]$-module for which ${\mathcal
I}(G)$ is a finite set, and let $\fb$ be the smallest ideal of
positive height in ${\mathcal I}(G)$. Then any graded product $K$ of
extensions of shifts of graded products of copies of $G$ is again
$x$-torsion-free, has ${\mathcal I}(K) = {\mathcal I}(G)$, and is
$\fb$-testable.

For example, if $E$ is an $x$-torsion-free left $R[x,f]$-module for
which ${\mathcal I}(E)$ is a finite set, and $\fd$ denotes the
smallest ideal of positive height in ${\mathcal I}(E)$, then any
graded product $L$ of extensions of shifts of graded products of
copies of the graded companion $\widetilde{E}$ of $E$ is again
$x$-torsion-free, has ${\mathcal I}(L) = {\mathcal I}(E)$, and is
$\fd$-testable.
\end{lem}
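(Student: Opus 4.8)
The plan is to establish the first two assertions --- that $K$ is $x$-torsion-free and that $\mathcal{I}(K)=\mathcal{I}(G)$ --- by showing these properties are inherited through each of the four operations out of which $K$ is built (forming a graded product of copies of $G$, applying a shift, forming an extension, and forming a graded product), and then to deduce $\fb$-testability from a general principle about $x$-torsion-free modules. For extensions, the inheritance of both properties is exactly Lemma~\ref{fp.gatcti2.8} together with Definition~\ref{fp.gatcti2.10}. For an $n$th shift it is immediate: $\Gamma_x$ of a module and the graded annihilator of a submodule --- hence the sets $\mathcal{G}$ and $\mathcal{I}$ --- depend only on the underlying $R[x,f]$-module structure and not on the grading. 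For a graded product $P=\prod^{\prime}_{\lambda}H^{(\lambda)}$ in which every $H^{(\lambda)}$ is $x$-torsion-free with $\mathcal{I}(H^{(\lambda)})=\mathcal{I}(G)$, one has $\Gamma_x(P)=0$ since $x$ acts componentwise; and for $\mathcal{I}(P)$ I would use that, for an $x$-torsion-free module, $\grann_{R[x,f]}N=(0:_RN)R[x,f]$ (the discussion after Lemma~\ref{ga1.7(v)} together with Proposition~\ref{ga1.11}), so that $\mathcal{I}(G)$ consists of the ideals $(0:_RN)$ as $N$ runs over the graded $R[x,f]$-submodules of $G$, and is therefore closed under finite intersections, because $(0:_RN_1)\cap(0:_RN_2)=(0:_R N_1+N_2)$ and $N_1+N_2$ is again such a submodule. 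Then for a graded submodule $N$ of $P$ one has $(0:_RN)=\bigcap_{\lambda}(0:_R\pi_{\lambda}(N))$, with $\pi_{\lambda}$ the coordinate projections; each term lies in $\mathcal{I}(G)$, and since $\mathcal{I}(G)$ is \emph{finite} this is really a finite intersection, hence again in $\mathcal{I}(G)$; conversely each member of $\mathcal{I}(G)$ is realised by a submodule of $P$ supported in a single coordinate, so $\mathcal{I}(P)=\mathcal{I}(G)$. Composing these three facts through the layers of the construction yields that $K$ is $x$-torsion-free with $\mathcal{I}(K)=\mathcal{I}(G)$.

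For $\fb$-testability I would first prove the following principle: if $H$ is an $x$-torsion-free left $R[x,f]$-module and $a\in R$ lies in every ideal of positive height in $\mathcal{I}(H)$, then $H$ is $a$-testable. Granting this, the Lemma follows by applying it with $H=K$: since $\mathcal{I}(K)=\mathcal{I}(G)$ and $\fb$ is the smallest ideal of positive height in $\mathcal{I}(G)$, the ideal $\fb$ is contained in every ideal of positive height in $\mathcal{I}(K)$, so $K$ is $a$-testable for each $a\in\fb$. To prove the principle, take $h\in H$ --- which we may assume non-zero --- and $c\in R^{\circ}$ with $cx^nh=0$ for all $n\geq n_0$, and put $h':=x^{n_0}h$, which is non-zero because $H$ is $x$-torsion-free. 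Then $c$ annihilates the submodule $R[x,f]h'$, so $c\in\grann_{R[x,f]}(R[x,f]h')\cap R=:\fc$. Applying Lemma~\ref{ga1.7(v)} and Proposition~\ref{ga1.11} to the special annihilator submodule $N':=\ann_H(\grann_{R[x,f]}(R[x,f]h'))$, whose graded annihilator is again $\grann_{R[x,f]}(R[x,f]h')$, one gets $\fc=(0:_RN')\in\mathcal{I}(H)$; moreover $\fc$ has positive height, since $c\in R^{\circ}$ forces $\fc$ into no minimal prime of $R$ (and $\fc\neq R$, as $h'\neq0$). Hence $a\in\fc=(0:_RN')$ annihilates $N'$, which contains $h'$ and is $R[x,f]$-stable, so $ax^mh=0$ for all $m\geq n_0$. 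Finally $x$-torsion-freeness upgrades this to all $m$: for $m<n_0$, $x^{n_0-m}(ax^mh)=a^{p^{n_0-m}}x^{n_0}h=a^{p^{n_0-m}-1}(ax^{n_0}h)=0$, so $ax^mh=0$.

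The ``for example'' is then immediate: the graded companion $\widetilde{E}$ is $x$-torsion-free with $\mathcal{I}(\widetilde{E})=\mathcal{I}(E)$ finite (by the constructions of \cite[\S2]{gatcti}), so the first part applies with $G=\widetilde{E}$, and hence with $\fb=\fd$. The step I expect to be the main obstacle is the graded-product case in the first paragraph, namely showing that forming a possibly infinite graded product does not enlarge $\mathcal{I}(G)$; this is precisely where the finiteness of $\mathcal{I}(G)$ is essential --- it is what collapses a potentially infinite intersection of members of $\mathcal{I}(G)$ to a finite one --- together with closure of $\mathcal{I}(G)$ under finite intersections.
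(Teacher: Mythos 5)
Your proposal is correct, and at the structural level it mirrors the paper's proof exactly: both decompose $K$ into the four basic operations (graded product of copies of $G$, shift, extension, outer graded product), argue that each preserves $x$-torsion-freeness and the set of special $R$-ideals, and then deduce $\fb$-testability. The difference is one of self-containment. The paper disposes of the graded-product step by citing \cite[Lemma 2.3]{gatcti}, and of the $\fb$-testability step by citing \cite[Theorem 3.12]{ga}; you instead supply complete arguments for both. Your graded-product argument --- using that $\mathcal{I}(G)$ consists of the annihilators $(0:_RN)$ of $R[x,f]$-submodules, that it is closed under finite intersections because $(0:_RN_1)\cap(0:_RN_2)=(0:_RN_1+N_2)$, and that the a priori infinite intersection $\bigcap_\lambda (0:_R\pi_\lambda(N))$ collapses to a finite one because $\mathcal{I}(G)$ is finite --- is sound, and essentially reconstructs the content of the cited \cite[Lemma 2.3]{gatcti}. (One small wording point: the coordinate projections $\pi_\lambda$ are $R[x,f]$-linear since $x$ acts componentwise, so $\pi_\lambda(N)$ is an $R[x,f]$-submodule of the copy of $G$; you should say ``$R[x,f]$-submodule'' rather than ``graded submodule'' there, since the $N$ occurring in the definition of $\mathcal{I}$ need not be graded.) Your proof of the $\fb$-testability principle is likewise sound and is in effect a proof of \cite[Theorem 3.12]{ga}: given $c\in R^{\circ}$ killing the tail of the orbit of $h$, you locate $c$ in $(0:_RN')$ for a special annihilator submodule $N'\ni h'=x^{n_0}h$, deduce that $(0:_RN')$ has positive height (since $c\in R^{\circ}$ and $h'\neq 0$), conclude $a$ kills $h'$ and all its $x$-translates, and then use $x$-torsion-freeness to push the annihilation back to lower degrees. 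In short, what you buy by your route is independence from two results of \cite{ga} and \cite{gatcti}, at the cost of a longer exposition; what the paper buys by citing is brevity. Either route gives a correct proof of the lemma, including its second paragraph via the graded companion $\widetilde{E}$.
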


\begin{proof} It follows from Lemma \ref{fp.gatcti2.8} that the
process of extension preserves torsion-freeness and the set of
special $R$-ideals; it is clear that shifting also preserves
torsion-freeness and the set of special $R$-ideals; and \cite[Lemma
2.3]{gatcti} shows that the graded product, say $K$, of a non-empty
family of $x$-torsion-free $\Z$-graded left $R[x,f]$-modules which
all have the same set ${\mathcal I}$ of special $R$-ideals is again
$x$-torsion-free with ${\mathcal I}(K) = {\mathcal I}$. If we denote
by $\fb$ the smallest ideal of positive height in ${\mathcal I}$,
then it follows from \cite[Theorem 3.12]{ga} that $K$ is
$\fb$-testable.

Finally, the process of passing from $E$ to its graded companion
$\widetilde{E}$ preserves torsion-freeness and the set of special
$R$-ideals, by \cite[Lemma 2.5]{gatcti}, and so the claims in the
final paragraph of the lemma follow on application of the first
paragraph to $\widetilde{E}$.
\end{proof}

The next lemma is a generalization of Lemma 3.1 of \cite{gatcti}.

\begin{lem}\label{fp.gatcit3.1} Suppose that $(R,\fm)$ is local and that there exists an $\nn$-graded left
$R[x,f]$-module $G = \bigoplus_{n\in\nn}G_n$ such that $G_0$ is
$R$-isomorphic to $E_R(R/\fm)$, the injective envelope of the simple
$R$-module $R/\fm$.

Let $M$ be an $\fm$-torsion $R$-module. Then the injective envelope
$E_R(M)$ of $M$ is $R$-isomorphic to a direct sum of copies of
$E_R(R/\fm)$, say to $\bigoplus_{j\in J}E^{(j)}$, where $E^{(j)} =
G_0$ for all $j$ in an indexing set $J$. Set $G^{(j)} = G$ for all
$j \in J$. Then there exists a homogeneous $R[x,f]$-homomorphism
\[
\lambda := \bigoplus_{n\in \nn}\lambda_n : R[x,f]\otimes_RM =
\bigoplus_{n\in \nn}(Rx^n\otimes_RM) \lra \prod_{j\in J}{\textstyle
^{^{^{\Large \prime}}}}G^{(j)},
\]
such that $\lambda_0$ is a monomorphism.
\end{lem}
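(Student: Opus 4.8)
The plan is to build $\lambda$ from a single $R$-monomorphism placed in degree $0$, exploiting the fact that $R[x,f]\otimes_RM$ is obtained from the $R$-module $M$ by extension of scalars along $R\hookrightarrow R[x,f]$: a homogeneous $R[x,f]$-homomorphism of degree $0$ out of $R[x,f]\otimes_RM$ is determined by, and may be prescribed arbitrarily by, an $R$-homomorphism on its degree-$0$ component $M=(R[x,f]\otimes_RM)_0$. This runs parallel to the proof of \cite[Lemma 3.1]{gatcti}, the only new feature being that the indexing set $J$ is now allowed to be infinite.

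I would first fix the degree-$0$ datum. Since $M$ is $\fm$-torsion, every non-zero element of $M$ is killed by a power of $\fm$, so $\fm$ is the only associated prime of $M$ (when $M\neq 0$). By Matlis's structure theory of injective modules over the Noetherian ring $R$, the injective envelope $E_R(M)$ is therefore $R$-isomorphic to a direct sum of copies of $E_R(R/\fm)$; using the hypothesis $G_0\cong E_R(R/\fm)$, we may write $E_R(M)=\bigoplus_{j\in J}E^{(j)}$ with each $E^{(j)}=G_0$. Put $G^{(j)}:=G$ for all $j\in J$ and $N:=\prod_{j\in J}{\textstyle ^{^{^{\Large \prime}}}}G^{(j)}$, so that $N_0=\prod_{j\in J}G_0$. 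Composing the canonical inclusion $M\hookrightarrow E_R(M)=\bigoplus_{j\in J}G_0$ with the inclusion $\bigoplus_{j\in J}G_0\hookrightarrow\prod_{j\in J}G_0=N_0$ yields an $R$-monomorphism $\lambda_0:M\lra N_0$.

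It then remains to extend $\lambda_0$ over $R[x,f]$. I would define $\lambda:R[x,f]\otimes_RM\lra N$ by $\lambda(\xi\otimes m)=\xi\,\lambda_0(m)$ for $\xi\in R[x,f]$ and $m\in M$; the only point needing verification is that the assignment $(\xi,m)\mapsto\xi\,\lambda_0(m)$ is $R$-balanced, and this holds because $\lambda_0$ is $R$-linear while $R$ acts on $N$ through $R[x,f]$. It is then routine that $\lambda$ is a left $R[x,f]$-homomorphism. Since left multiplication by $x^n$ raises degrees by $n$ in every $\Z$-graded left $R[x,f]$-module, $\lambda$ carries $(R[x,f]\otimes_RM)_n=Rx^n\otimes_RM$ into $N_n$, so $\lambda$ is homogeneous of degree $0$, say $\lambda=\bigoplus_{n\in\nn}\lambda_n$; and under the identification $(R[x,f]\otimes_RM)_0=M$ its degree-$0$ component is exactly the monomorphism $\lambda_0$ constructed above. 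This is all that was required.

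There is no serious obstacle here: the construction is essentially formal once one observes that homogeneous degree-$0$ maps out of $R[x,f]\otimes_RM$ are governed by their degree-$0$ part. The two steps that genuinely use the hypotheses are the identification of $E_R(M)$ with a direct sum of copies of $G_0$ --- where the $\fm$-torsion condition on $M$ and the isomorphism $G_0\cong E_R(R/\fm)$ enter --- and the (bookkeeping) decision to take the graded product of the $G^{(j)}$, rather than the graded direct sum, as target, made for uniformity with the product constructions of \S\ref{aet}.
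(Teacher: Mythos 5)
Your proposal is correct and is essentially the paper's own argument: both obtain the degree-$0$ monomorphism $\lambda_0:M\lra\prod_{j\in J}G^{(j)}_0$ by composing $M\hookrightarrow E_R(M)\cong\bigoplus_{j\in J}G_0\hookrightarrow\prod_{j\in J}G_0$, and both extend it by the rule $rx^n\otimes m\mapsto rx^n\lambda_0(m)$. The only (cosmetic) difference is that you invoke the universal property of $R[x,f]\otimes_R(-)$ to produce $\lambda$ in one step, checking $R$-balancedness, whereas the paper defines the components $\lambda_n$ directly and declares the compatibility checks routine.
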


\begin{proof} The fact that $E_R(M)$ is $R$-isomorphic to a direct sum of copies of
$E_R(R/\fm)$ follows from E. Matlis's theory of injective modules
over a commutative Noetherian ring, described in (for example)
\cite[Chapter 18]{HM}. Set
$$
L = \bigoplus_{n\in\nn}L_n := \prod_{j\in J}{\textstyle ^{^{^{\Large
\prime}}}}G^{(j)}.
$$

Since $\bigoplus_{j\in J}G^{(j)}_0$ can be embedded in $\prod_{j\in
J}G^{(j)}_0$, there is an $R$-monomorphism $\lambda_0 : M \lra L_0$.
We can define, for each $n \in \nn$, an $R$-homomorphism
$$\lambda_n: Rx^n \otimes_RM \lra L_n$$ such that $\lambda_n(rx^n
\otimes m) = rx^n\lambda_0(m)$ for all $r \in R$ and all $m \in M$.
It is straightforward to check that the $\lambda_n~(n\in\nn)$
provide a homogeneous $R[x,f]$-homomorphism as claimed.
\end{proof}

We are now able to prove an embedding theorem, for $\fm$-torsion
modules over the local ring $(R,\fm)$, that is reminiscent of the
result in the first part of \cite[Theorem 3.5]{gatcti}.

\begin{thm}\label{fp.gatcti3.5}
Suppose that $(R,\fm)$ is local, and that there exists an
$\nn$-graded left $R[x,f]$-module $G = \bigoplus_{n\in\nn}G_n$ such
that $G_0$ is $R$-isomorphic to $E_R(R/\fm)$, the injective envelope
of the simple $R$-module $R/\fm$. Let $M$ be an $\fm$-torsion
$R$-module. Then there is a family $\left(H^{(n)}\right)_{n \in
\nn}$ of\/ $\nn$-graded left $R[x,f]$-modules, where $H^{(n)}$ is an
$n$-place extension of a shift of a graded product of copies of $G$
(for each $n \in \nn$), for which there exists a homogeneous
$R[x,f]$-monomorphism
\[
\nu : R[x,f]\otimes_RM = \bigoplus_{i\in \nn}(Rx^i\otimes_RM) \lra
\prod_{n\in\nn}{\textstyle ^{^{^{\!\!\Large \prime}}}} H^{(n)} =: K.
\]

If $G$ is $x$-torsion-free and ${\mathcal I}(G)$ is finite, and we
let $\fb$ be the smallest ideal of positive height in ${\mathcal
I}(G)$, then $K$ is $x$-torsion-free with ${\mathcal I}(K) =
{\mathcal I}(G)$ and $R[x,f]\otimes_RM$ is $\fb$-testable.
\end{thm}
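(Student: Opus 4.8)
The plan is to build the monomorphism $\nu$ by an iterative, degree-by-degree procedure, exactly paralleling the structure of the proof of \cite[Theorem 3.5]{gatcti} but feeding in the new technical lemmas just developed. First I would apply Lemma \ref{fp.gatcit3.1} to obtain a homogeneous $R[x,f]$-homomorphism $\lambda^{(0)} = \bigoplus_{n}\lambda^{(0)}_n : R[x,f]\otimes_RM \lra \prod'_{j\in J}G^{(j)} =: H^{(0)}$ with $\lambda^{(0)}_0$ a monomorphism; this is the $0$-place extension of a (zero-shift) graded product of copies of $G$. The obstruction to $\lambda^{(0)}$ being a monomorphism lies in its higher-degree components: elements of $\bigoplus_{i\geq 1}(Rx^i\otimes_RM)$ may map to zero. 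The remedy, as in \cite{gatcti}, is to enlarge the target: having secured injectivity in degrees $0,1,\dots,n-1$ by a homomorphism $\mu^{(n-1)}$ into a graded product $K^{(n-1)}$ of the $H^{(0)},\dots,H^{(n-1)}$ already constructed, one chooses a generating family for the degree-$n$ piece $Rx^n\otimes_RM$, uses Proposition \ref{fp.gatcti2.11} (with $b=n$) to produce an $n$-place extension $H^{(n)}$ of a shift of a graded product of copies of $G$ together with a homogeneous $R[x,f]$-homomorphism into it that is injective on $Rx^n\otimes_RM$, and then forms the graded product with the previous data. Taking the graded product over all $n\in\nn$ of the resulting family $(H^{(n)})_{n\in\nn}$ gives $K$, and the diagonal-type map $\nu$ assembled from the $\mu^{(n)}$ (which stabilize in each fixed degree) is the desired monomorphism.

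Next I would verify the structural description of $K$: each $H^{(n)}$ is, by construction via Proposition \ref{fp.gatcti2.11}, an $n$-place extension of a shift of a graded product of copies of $G$, and $K = \prod'_{n\in\nn}H^{(n)}$ is then exactly a graded product of extensions of shifts of graded products of copies of $G$. So the hypotheses of Lemma \ref{fp.7z} are met. Applying that lemma immediately yields that, when $G$ is $x$-torsion-free with ${\mathcal I}(G)$ finite, $K$ is $x$-torsion-free with ${\mathcal I}(K) = {\mathcal I}(G)$, and $K$ is $\fb$-testable, where $\fb$ is the smallest ideal of positive height in ${\mathcal I}(G)$. Finally, to transfer $\fb$-testability from $K$ to $R[x,f]\otimes_RM$: since $\nu$ is a homogeneous $R[x,f]$-monomorphism, if $h \in R[x,f]\otimes_RM$ and $c \in R^{\circ}$ satisfy $cx^nh = 0$ for all $n \gg 0$, then $cx^n\nu(h) = \nu(cx^nh) = 0$ for all $n \gg 0$, so $\fb$-testability of $K$ gives $ax^n\nu(h) = 0$ for all $n \geq 0$ and all $a \in \fb$, whence $\nu(ax^nh) = 0$ and, by injectivity, $ax^nh = 0$; thus $R[x,f]\otimes_RM$ is $\fb$-testable.

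The routine-but-delicate bookkeeping is the induction that produces $\nu$: one must be careful that the homomorphisms $\mu^{(n)}: R[x,f]\otimes_RM \lra K^{(n)}$ into the partial graded products, for increasing $n$, are compatible in the sense that in each fixed degree $i$ the $i$th component has stabilized by the time $n \geq i$ (so that the infinite graded product $K$ and the limiting map $\nu$ make sense), and that $\nu$ is injective in every degree — injectivity in degree $i$ being guaranteed once $n=i$ has been processed. This is precisely the argument of \cite[Theorem 3.5]{gatcti}, and I would simply remark that the proof there goes through verbatim, now citing Lemma \ref{fp.gatcit3.1}, Proposition \ref{fp.gatcti2.11}, Lemma \ref{fp.gatcti2.8} and Definition \ref{fp.gatcti2.10} in place of their \cite{gatcti} counterparts, with the new ingredient being that $M$ need only be $\fm$-torsion (not finitely generated), which is exactly what was arranged in Lemma \ref{fp.gatcit3.1} by allowing the possibly-infinite indexing set $J$. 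The main obstacle, then, is not any single hard estimate but rather ensuring the infinite iteration is set up cleanly; the torsion-freeness, ${\mathcal I}$-preservation and $\fb$-testability conclusions are then a direct appeal to Lemma \ref{fp.7z} together with the transfer-along-$\nu$ argument above.
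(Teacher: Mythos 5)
Your proposal is correct and follows essentially the same approach as the paper: build, for each $n$, a homogeneous $R[x,f]$-homomorphism $\lambda^{(n)}:R[x,f]\otimes_RM \lra H^{(n)}$ that is monomorphic in degree $n$ by applying Lemma \ref{fp.gatcit3.1} to the $\fm$-torsion module $Rx^n\otimes_RM$, shifting, and then extending to all degrees via Proposition \ref{fp.gatcti2.11}; assemble these into $\nu$ landing in $\prod'_{n}H^{(n)}$; and then invoke Lemma \ref{fp.7z} (and transfer along the monomorphism $\nu$) for the torsion-freeness and $\fb$-testability claims. One small remark: the paper's construction is not really iterative and involves no compatibility or ``stabilization'' issues, since each $\lambda^{(n)}$ is built independently with full domain $R[x,f]\otimes_RM$ and the map $\nu_j(\xi_j) = \bigl((\lambda^{(n)})_j(\xi_j)\bigr)_{n\in\nn}$ is defined all at once into the graded product; the partial-product $K^{(n)}$ bookkeeping you describe is an unnecessary complication. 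Also, at stage $n$ you would still need to spell out where the partial homomorphism $\bigoplus_{j\geq n}(Rx^j\otimes_RM)\lra W$ that feeds into Proposition \ref{fp.gatcti2.11} comes from: as in the paper, this is Lemma \ref{fp.gatcit3.1} applied to the $\fm$-torsion module $Rx^n\otimes_RM$ followed by the natural degree identification and a shift by $-n$.
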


\begin{proof} For each $n \in \nn$, the (left) $R$-module $Rx^n
\otimes_RM$ is $\fm$-torsion, since, if $\fm^tg = 0$ for a $g \in M$
and some $t \in \N$, then $(\fm^t)^{[p^n]}(x^n \otimes g) = 0$.

Let $n \in \nn$. By Lemma \ref{fp.gatcit3.1}, there is a family
$(G^{(j,n)})_{j \in Y_n}$ of graded left $R[x,f]$-modules, all equal
to $G$, and a homogeneous $R[x,f]$-homomorphism
$$R[x,f]\otimes_R(Rx^n\otimes_R M) \lra \prod_{j\in
Y_n}{\textstyle ^{^{^{\Large \prime}}}}G^{(j,n)}$$ which is
monomorphic in degree $0$. If we now use isomorphisms of the type
described in \cite[Remark 3.2]{gatcti}, we obtain (after application
of the shift functor $ (\: {\scriptscriptstyle \bullet} \:)(-n)$) a
homogeneous $R[x,f]$-homomorphism
$$
\zeta^{(n)} : \bigoplus_{j \geq n}(Rx^j\otimes_RM) \lra
\left(\prod_{j\in Y_n}{\textstyle ^{^{^{\Large
\prime}}}}G^{(j,n)}\right)(-n)
$$
which is monomorphic in degree $n$. We can now use Proposition
\ref{fp.gatcti2.11} to extend $\zeta^{(n)}$ by $n$ places to produce
a homogeneous $R[x,f]$-homomorphism
$$
\lambda^{(n)} : \bigoplus_{j \geq 0}(Rx^j\otimes_RM) =
R[x,f]\otimes_RM \lra H^{(n)},
$$
where $H^{(n)}$ is an appropriate $n$-place extension of
$\left(\prod_{j\in Y_n}^{\prime} G^{(j,n)}\right)(-n)$, such that
$\lambda^{(n)}$ is monomorphic in degree $n$.

There is therefore a homogeneous $R[x,f]$-homomorphism
\[
\nu  = \bigoplus_{j\in \nn}\nu_j : R[x,f]\otimes_RM \lra \prod_{
n\in\nn}{\textstyle ^{^{^{\!\!\Large \prime}}}} H^{(n)} =: K
\]
such that $\nu_j(\xi_j) =
\big((\lambda^{(n)})_j(\xi_j)\big)_{n\in\nn}$ for all $j \in \nn$
and $\xi_j \in Rx^j \otimes_RM$. For each $j \in \nn$, we have that
$(\lambda^{(j)})_j$ is a monomorphism; hence $\nu_j$ is a
monomorphism. Hence $\nu$ is an $R[x,f]$-monomorphism.

Lemma \ref{fp.7z} shows that $K$ is $x$-torsion-free with ${\mathcal
I}(K) = {\mathcal I}(G)$, and that $K$ is $\fb$-testable. In view of
the $R[x,f]$-monomorphism $\nu$, it follows that $R[x,f]\otimes_RM$
is $\fb$-testable.
\end{proof}

\begin{cor}\label{fp.9z} Suppose that $(R,\fm)$ is $F$-pure and
local. Then the left $R[x,f]$-module $R[x,f]\otimes_RE_R(R/\fm)$ is
$x$-torsion-free, and its set ${\mathcal
I}(R[x,f]\otimes_RE_R(R/\fm))$ of
$(R[x,f]\otimes_RE_R(R/\fm))$-special $R$-ideals is finite.

In fact, for any $x$-torsion-free left $R[x,f]$-module structure on
$E_R(R/\fm)$ that extends its $R$-module structure (and such exist,
by Theorem\/ {\rm \ref{fp.1}}), we have $${\mathcal
I}(R[x,f]\otimes_RE_R(R/\fm)) \subseteq {\mathcal I}(E_R(R/\fm)),$$
and the latter set is finite; furthermore, if $\fb$ is the smallest
ideal in ${\mathcal I}(E_R(R/\fm))$ of positive height, then
$R[x,f]\otimes_RE_R(R/\fm)$ is $\fb$-testable.
\end{cor}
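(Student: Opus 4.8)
The plan is to derive both assertions from the embedding theorem \ref{fp.gatcti3.5}, applied with the $\fm$-torsion module $M$ taken to be $E_R(R/\fm)$ itself and with $G$ taken to be a graded companion of $E_R(R/\fm)$.

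First I would invoke Theorem \ref{fp.1}: because $(R,\fm)$ is $F$-pure, the $R$-module structure on $E := E_R(R/\fm)$ extends to an $x$-torsion-free left $R[x,f]$-module structure, and I fix any such structure. Since $E$ is Artinian as an $R$-module, Theorem \ref{ga3.11} shows that $\mathcal{I}(E)$ is a finite set. Passing to the graded companion $\widetilde{E}$ of $E$ (in the sense of \ref{fp.7}(ii)), the facts recalled in the proof of Lemma \ref{fp.7z} (ultimately \cite[Lemma 2.5]{gatcti}) show that $\widetilde{E}$ is again $x$-torsion-free, with $\mathcal{I}(\widetilde{E}) = \mathcal{I}(E)$, still finite, and with $(\widetilde{E})_0 = E \cong E_R(R/\fm)$.

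Next, observing that $M := E_R(R/\fm)$ is an $\fm$-torsion $R$-module, I would apply Theorem \ref{fp.gatcti3.5} with $G := \widetilde{E}$. This produces a homogeneous $R[x,f]$-monomorphism
\[
\nu : R[x,f]\otimes_R E_R(R/\fm) \lra K,
\]
where $K$ is a graded product of $n$-place extensions ($n \in \nn$) of shifts of graded products of copies of $\widetilde{E}$; moreover $K$ is $x$-torsion-free with $\mathcal{I}(K) = \mathcal{I}(\widetilde{E}) = \mathcal{I}(E)$, and $R[x,f]\otimes_R E_R(R/\fm)$ is $\fb$-testable, where $\fb$ denotes the smallest ideal of positive height in $\mathcal{I}(E)$. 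Since $R[x,f]\otimes_R E_R(R/\fm)$ is, via $\nu$, isomorphic to an $R[x,f]$-submodule of the $x$-torsion-free module $K$, it is itself $x$-torsion-free (this also follows directly from the remark in the Introduction that $R[x,f]\otimes_R N$ is $x$-torsion-free for every $R$-module $N$ when $R$ is $F$-pure), so the set $\mathcal{I}(R[x,f]\otimes_R E_R(R/\fm))$ is defined.

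Finally I would record the inclusion $\mathcal{I}(R[x,f]\otimes_R E_R(R/\fm)) \subseteq \mathcal{I}(E)$; this is the one place where a short argument beyond quoting earlier results is needed. Given a radical ideal $\fc$ of $R$ lying in $\mathcal{I}(R[x,f]\otimes_R E_R(R/\fm))$, there is an $R[x,f]$-submodule $N$ of $R[x,f]\otimes_R E_R(R/\fm)$ with $\grann_{R[x,f]}N = \fc R[x,f]$; then $\nu(N)$ is an $R[x,f]$-submodule of $K$ isomorphic to $N$, whence $\grann_{R[x,f]}(\nu(N)) = \fc R[x,f]$ and so $\fc \in \mathcal{I}(K) = \mathcal{I}(E)$. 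In particular $\mathcal{I}(R[x,f]\otimes_R E_R(R/\fm))$ is finite; and since the left $R[x,f]$-module $R[x,f]\otimes_R E_R(R/\fm)$ depends only on the $R$-module $E_R(R/\fm)$ and not on the chosen $x$-torsion-free structure on it, the displayed inclusion holds for every such structure. I expect no real obstacle here: the substantive content sits entirely inside Theorem \ref{fp.gatcti3.5}, and what is left is the choice $G = \widetilde{E}$, the standard fact that $E_R(R/\fm)$ is Artinian, and the routine transport of special annihilator submodules along the monomorphism $\nu$.
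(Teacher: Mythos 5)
Your proposal is correct and follows essentially the same route as the paper's own proof: invoke Theorem \ref{fp.1} to obtain an $x$-torsion-free structure on $E_R(R/\fm)$, form the graded companion $\widetilde{E}$, apply Theorem \ref{fp.gatcti3.5} with $M = E_R(R/\fm)$ (an $\fm$-torsion module) and $G = \widetilde{E}$, and transport graded annihilators of submodules along the resulting monomorphism $\nu$. Your explicit justification of the inclusion $\mathcal{I}(R[x,f]\otimes_R E_R(R/\fm)) \subseteq \mathcal{I}(E_R(R/\fm))$ by pushing a special submodule forward through the $R[x,f]$-monomorphism $\nu$ is exactly what the paper leaves implicit, and your earlier placement of the finiteness of $\mathcal{I}(E)$ (needed as a hypothesis for the second part of Theorem \ref{fp.gatcti3.5}) is a minor but sensible improvement in the exposition.
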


\begin{proof} In view of Theorem \ref{fp.1}, it is enough to prove the
claim in the second paragraph. So select an $x$-torsion-free left
$R[x,f]$-module structure on $E := E_R(R/\fm)$ that extends its
$R$-module structure.  Construct the graded companion
$\widetilde{E}$ of $E$, as in Reminders \ref{fp.7}(ii), and recall
that $\widetilde{E}$ is again $x$-torsion-free and has ${\mathcal
I}(\widetilde{E}) = {\mathcal I}(E)$. Since $E$ is $\fm$-torsion, it
follows from Theorem \ref{fp.gatcti3.5} that there exists an
$x$-torsion-free $\nn$-graded left $R[x,f]$-module $K$ with
${\mathcal I}(K) = {\mathcal I}(E)$, and a homogeneous
$R[x,f]$-monomorphism $ \nu : R[x,f]\otimes_RE = \bigoplus_{i\in
\nn}(Rx^i\otimes_RE) \lra K$. Therefore $R[x,f]\otimes_RE$ is
$x$-torsion-free and ${\mathcal I}(R[x,f]\otimes_RE) \subseteq
{\mathcal I}(E).$ It follows from \cite[Corollary 3.11]{ga} that
${\mathcal I}(E)$ is finite; we can then use \cite[Theorem 3.12]{ga}
to deduce that $K$ and $R[x,f]\otimes_RE$ are $\fb$-testable.
\end{proof}

\begin{cor}\label{fp.9y} Suppose that $(R,\fm)$ is $F$-pure, local and
complete. Let $c \in R^{\circ}$ be such that $R_c$ is regular. Let
$M$ be an $\fm$-torsion $R$-module. Then the left $R[x,f]$-module
$R[x,f]\otimes_RM$ is $x$-torsion-free and $c$-testable.
\end{cor}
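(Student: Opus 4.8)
The plan is to establish the two assertions separately. That $R[x,f]\otimes_RM$ is $x$-torsion-free is immediate, since $R$ is $F$-pure and, as noted in the Introduction, $R[x,f]\otimes_RN$ is $x$-torsion-free for every $R$-module $N$. The substance of the corollary is the claim that $R[x,f]\otimes_RM$ is $c$-testable, and for that the plan is: (a) to produce an ideal $\fb$ of $R$ with $\fb R_c = R_c$ such that $R[x,f]\otimes_RM$ is $\fb$-testable; (b) to deduce that $c^h \in \fb$, hence that $R[x,f]\otimes_RM$ is $c^h$-testable, for some $h \in \N$; and (c) to upgrade $c^h$-testability to $c$-testability by invoking $x$-torsion-freeness a second time.

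For step (a) I would use Cohen's Structure Theorem to write $R = S/\fa$, with $S$ a complete regular local ring of characteristic $p$ and $\fa$ a proper, non-zero ideal of $S$ (which can always be arranged, using a non-minimal presentation if $R$ is itself regular), and adopt the notation of \ref{hrlr.1}. By Lemma \ref{fp.23} there exist $u_1,\dots,u_t \in (\fa^{[p]}:\fa)\setminus\fn^{[p]}$ whose images generate the $S$-module $(\fa^{[p]}:\fa)/\fa^{[p]}$; for each $i$, let $E_i$ denote $E_R(R/\fm)$ endowed with the left $R[x,f]$-module structure for which $xe = u_iye$, which is $x$-torsion-free by Proposition \ref{fp.0}, and let $\fb_i$ be the smallest ideal of positive height in $\mathcal{I}(E_i)$ (a finite set, by \cite[Corollary 3.11]{ga}, as $E_i$ is Artinian over $R$). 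The graded companion $\widetilde{E_i}$ of Reminders \ref{fp.7}(ii) is then an $\nn$-graded $x$-torsion-free left $R[x,f]$-module with $(\widetilde{E_i})_0 \cong E_R(R/\fm)$ and $\mathcal{I}(\widetilde{E_i}) = \mathcal{I}(E_i)$; so, since $M$ is $\fm$-torsion, Theorem \ref{fp.gatcti3.5}, applied with $G = \widetilde{E_i}$, yields that $R[x,f]\otimes_RM$ is $\fb_i$-testable. As this holds for $i = 1,\dots,t$, the module $R[x,f]\otimes_RM$ is $\fb$-testable, where $\fb := \fb_1 + \cdots + \fb_t$, and Proposition \ref{fp.22} gives $\fb R_c = R_c$.

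For step (b), $\fb R_c = R_c$ forces $c^h \in \fb$ for some $h \in \N$, so $R[x,f]\otimes_RM$ is $c^h$-testable. For step (c), given $\xi \in R[x,f]\otimes_RM$ and $c' \in R^{\circ}$ with $c'x^n\xi = 0$ for all $n \gg 0$, $c^h$-testability gives $c^hx^m\xi = 0$ for all $m \geq 0$; choosing $j \in \nn$ with $p^j \geq h$ we then have
$$ x^j(cx^n\xi) = c^{p^j}x^{n+j}\xi = c^{p^j - h}\bigl(c^h x^{n+j}\xi\bigr) = 0 \qquad\text{for every } n \geq 0, $$
and since $R[x,f]\otimes_RM$ is $x$-torsion-free this forces $cx^n\xi = 0$ for all $n \geq 0$. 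Hence $R[x,f]\otimes_RM$ is $c$-testable, as required.

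Most of the work is already done in the earlier sections: the decisive inputs are Proposition \ref{fp.22}, which supplies $\fb R_c = R_c$, and the embedding theorem \ref{fp.gatcti3.5}, which transports $\fb_i$-testability from the $\widetilde{E_i}$ to $R[x,f]\otimes_RM$ for an arbitrary $\fm$-torsion module $M$. I expect the only genuinely new point to be the passage from $c^h$-testability to $c$-testability in step (c), which turns on the $x$-torsion-freeness that $F$-purity provides; a routine remark, needed along the way, is that $\fb$-testability for $\fb = \fb_1 + \cdots + \fb_t$ follows at once, from the definition in \ref{pl.3}, from $\fb_i$-testability for each $i$.
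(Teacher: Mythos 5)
Your proposal is correct and follows essentially the same route as the paper's own proof: Cohen's Structure Theorem to write $R = S/\fa$, Lemma \ref{fp.23} and Proposition \ref{fp.0} to produce the $x$-torsion-free structures $E_i$, Theorem \ref{fp.gatcti3.5} (applied to the graded companions $\widetilde{E_i}$) to get $\fb_i$-testability of $R[x,f]\otimes_R M$, Proposition \ref{fp.22} to get $\fb R_c = R_c$, and finally $x$-torsion-freeness to descend from a power of $c$ to $c$ itself. The only cosmetic differences are that you establish $x$-torsion-freeness directly from $F$-purity rather than via the embedding in Theorem \ref{fp.gatcti3.5}, and you carry out step (c) with $c^{p^j} = c^{p^j-h}c^h$ while the paper first passes to $c^{p^e}$-testability and then factors out $x^e$.
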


\begin{proof} We appeal to Cohen's Structure Theorem for complete local
rings containing a subfield and write $R = S/\fa$, where $S$ is a
complete regular local ring of characteristic $p$ and $\fa$ is a
proper, non-zero ideal of $S$. This is consistent with the notation
of \ref{hrlr.1}, and we shall use that notation for the remainder of
this proof.

We now appeal to Proposition \ref{fp.22}. Use Lemma\/ {\rm
\ref{fp.23}} to find $u_1, \ldots, u_t \in (\fa^{[p]}:\fa) \setminus
\fn^{[p]}$ such that their natural images in
$(\fa^{[p]}:\fa)/\fa^{[p]}$ form a generating set for this
$S$-module; for each $i = 1, \ldots, t$, let $E_i$ denote $(0:_E\fa)
= E_R(R/\fm)$ endowed with the ($x$-torsion-free) left
$R[x,f]$-module structure for which $xe = u_iye$ for all $e \in
(0:_E\fa)$; and let $\fb_i$ be the unique smallest ideal of positive
height in ${\mathcal I}(E_i)$.

By Theorem \ref{fp.gatcti3.5} (applied to the graded companions
$\widetilde{E_1}, \ldots, \widetilde{E_t}$ in turn), we deduce that
$R[x,f]\otimes_RM$ is $x$-torsion-free, and $\fb_i$-testable for all
$i = 1, \ldots, t$. Therefore $R[x,f]\otimes_RM$ is $(\fb_1 + \cdots
+ \fb_t)$-testable. By Proposition \ref{fp.22}, we have $(\fb_1 +
\cdots + \fb_t) R_c = R_c$. Hence some power of $c$ belongs to
$\fb_1 + \cdots + \fb_t$, and there exists $e \in \nn$ such that
$R[x,f]\otimes_RM$ is $c^{p^e}$-testable.

Therefore, for each $y \in R[x,f]\otimes_RM$ for which there exists
$d \in R^{\circ}$ such that $dx^ny = 0$ for all $n \gg 0$, we have
$c^{p^e}x^jy = 0$ for all $j \in \nn$. Thus
$$
x^ecx^iy = c^{p^e}x^{e+i}y = 0 \quad \mbox{for all~} i \in \nn.
$$
However, $R[x,f]\otimes_RM$ is $x$-torsion-free, and so $cx^iy = 0$
for all $i \in \nn$. Therefore $R[x,f]\otimes_RM$ is $c$-testable.
\end{proof}

\begin{cor}\label{fp.9x} Suppose that $(R,\fm)$ is $F$-pure,
local and excellent. Let $c \in R^{\circ}$ be such that $R_c$ is
regular. Let $M$ be an $\fm$-torsion $R$-module. Then the left
$R[x,f]$-module $R[x,f]\otimes_RM$ is $x$-torsion-free and
$c$-testable.

In particular, $R[x,f]\otimes_RE_R(R/\fm)$ is $x$-torsion-free and
$c$-testable.
\end{cor}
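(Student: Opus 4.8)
The plan is to reduce everything to the complete case, which is exactly Corollary \ref{fp.9y}. The $x$-torsion-freeness of $R[x,f]\otimes_RM$ is already available, since $R$ is $F$-pure (see the Introduction), so the real content is the $c$-testability, and the ``in particular'' clause is just the special case $M = E_R(R/\fm)$, which is $\fm$-torsion.

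First I would pass to the completion $\widehat R$ of $R$. By Lemma \ref{mr.HR}, $\widehat R$ is $F$-pure (and local and complete). Exactly as in the second paragraph of the proof of Theorem \ref{fp.24}, the localization $\widehat R_c$ is regular: the fibre rings of the flat homomorphism $R_c \lra \widehat R_c$ are rings of fractions of the formal fibres of $R$, hence regular, and $R_c$ is regular by hypothesis. I would also observe that $R^{\circ}$ maps into $(\widehat R)^{\circ}$; indeed, since $R \lra \widehat R$ is faithfully flat it satisfies going-down, so every minimal prime of $\widehat R$ contracts to a minimal prime of $R$, whence any element of $R^{\circ}$ avoids every minimal prime of $\widehat R$. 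Finally, $M$, being $\fm$-torsion as an $R$-module, is $\fm\widehat R$-torsion as an $\widehat R$-module.

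The key step is to identify $R[x,f]\otimes_RM$, as a graded left $R[x,f]$-module, with $\widehat R[x,f]\otimes_{\widehat R}M$ regarded as a left $R[x,f]$-module via the ring homomorphism $\widetilde\theta : R[x,f]\lra \widehat R[x,f]$ of Proposition \ref{aet.1}. Since $M$ is $\fm$-torsion, each graded component $Rx^n\otimes_RM$ of $R[x,f]\otimes_RM$ is $\fm$-torsion (if $\fm^tg = 0$ then $(\fm^t)^{[p^n]}(x^n\otimes g) = 0$), so $R[x,f]\otimes_RM$ does carry a natural graded left $\widehat R[x,f]$-module structure extending its $R[x,f]$-module structure. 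To see that, with this structure, it coincides with $\widehat R[x,f]\otimes_{\widehat R}M$, it suffices to check in each degree $n$ that the natural map $Rx^n\otimes_RM \lra \widehat Rx^n\otimes_{\widehat R}M$ is an isomorphism. Using $M = \bigcup_{t\in\N}(0:_M\fm^t)$ and that each $(0:_M\fm^t)$ is a module over $R/\fm^t = \widehat R/\fm^t\widehat R$, this reduces, after tensoring down over $R/\fm^t$ and passing to the union over $t$, to the isomorphisms $Rx^n\otimes_RR/\fm^t \cong R/(\fm^t)^{[p^n]}$ and $\widehat Rx^n\otimes_{\widehat R}\widehat R/\fm^t\widehat R \cong \widehat R/(\fm^t)^{[p^n]}\widehat R$, together with the fact that $(\fm^t)^{[p^n]}$ is $\fm$-primary, so that $R/(\fm^t)^{[p^n]} \cong \widehat R/(\fm^t)^{[p^n]}\widehat R$.

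With these identifications in place, I would apply Corollary \ref{fp.9y} to the $F$-pure complete local ring $\widehat R$, the element $c \in (\widehat R)^{\circ}$ (for which $\widehat R_c$ is regular), and the $\fm\widehat R$-torsion $\widehat R$-module $M$, obtaining that $\widehat R[x,f]\otimes_{\widehat R}M$ is $x$-torsion-free and $c$-testable. Transported back along the graded identification above, $R[x,f]\otimes_RM$ is $x$-torsion-free and $c$-testable as a left $\widehat R[x,f]$-module. Since $\widetilde\theta$ fixes $x$ and carries $R^{\circ}$ into $(\widehat R)^{\circ}$, the defining condition for $c$-testability over $R[x,f]$ (in which the element $d$ is only required to lie in $R^{\circ}$) is a special case of that over $\widehat R[x,f]$; hence $R[x,f]\otimes_RM$ is $c$-testable as a left $R[x,f]$-module, and the ``in particular'' statement follows by taking $M = E_R(R/\fm)$. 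The main obstacle will be making the graded identification $R[x,f]\otimes_RM \cong \widehat R[x,f]\otimes_{\widehat R}M$, and the compatibility of the $x$-, $c$- and $d$-actions, completely precise; everything else is routine descent along completion.
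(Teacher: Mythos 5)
Your proof is correct, and it takes a genuinely different route from the paper's. The paper re-runs the embedding machinery: it first applies Corollary \ref{fp.9y} over $\widehat R$ to produce $\widehat R[x,f]\otimes_{\widehat R}E_{\widehat R}(\widehat R/\fm\widehat R)$, regards that as a graded left $R[x,f]$-module (whose degree-$0$ component is $E_R(R/\fm)$), and then invokes Theorem \ref{fp.gatcti3.5} to embed $R[x,f]\otimes_RM$ into a graded product $K$ of extensions of shifts of graded products of copies of it; $K$ is $x$-torsion-free and $c$-testable by Lemma \ref{fp.gatcti2.8} and Reminders \ref{fp.7}, so $R[x,f]\otimes_RM$ is too. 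You instead bypass the second invocation of the embedding machinery by establishing a direct graded identification $R[x,f]\otimes_RM \cong \widehat R[x,f]\otimes_{\widehat R}M$, valid because $M$ is $\fm$-torsion: each component $Rx^n\otimes_RM$ is $\fm$-torsion and your reduction through $M=\varinjlim_t(0:_M\fm^t)$, $Rx^n\otimes_RR/\fm^t\cong R/(\fm^t)^{[p^n]}\cong\widehat R/(\fm^t)^{[p^n]}\widehat R\cong\widehat Rx^n\otimes_{\widehat R}\widehat R/\fm^t\widehat R$ is sound (tensor commutes with direct limits, and $(\fm^t)^{[p^n]}$ is $\fm$-primary). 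Your descent of $c$-testability from $\widehat R[x,f]$ to $R[x,f]$ via $R^{\circ}\subseteq(\widehat R)^{\circ}$ (using going-down for the faithfully flat map $R\lra\widehat R$) is also the right observation, and the preliminary facts ($\widehat R$ is $F$-pure, $\widehat R_c$ is regular) are handled exactly as in the paper. What your approach buys is directness for this specific $\fm$-torsion situation; what the paper's approach buys is uniformity with the non-local Theorem \ref{fp.32}, which must embed rather than identify because there the module $M$ is arbitrary and no single completion is available.
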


\begin{proof} Note that $\widehat{R}$ is again $F$-pure, by Lemma \ref{mr.HR}. Complete local rings
are always excellent. Furthermore, the fibre rings of the flat ring
homomorphism $R_c \lra \widehat{R}_c$ induced by inclusion are rings
of fractions of the formal fibres of $R$, and so are regular; it
therefore follows that $\widehat{R}_c$ is regular. Note that $c \in
\widehat{R}^{\circ}$.

We now appeal to Corollary \ref{fp.9y} to deduce that the graded
left $\widehat{R}[x,f]$-module
$$
\widehat{R}[x,f]\otimes_{\widehat{R}}E_{\widehat{R}}(\widehat{R}/\fm\widehat{R})
$$
is $x$-torsion-free and $c$-testable. Regard this as an
$R[x,f]$-module in the natural way: it is still $x$-torsion-free and
$c$-testable, and its $0$th component is $R$-isomorphic to
$E_R(R/\fm)$.

By Theorem \ref{fp.gatcti3.5}, there is a family
$\left(H^{(n)}\right)_{n \in \nn}$ of\/ $\nn$-graded left
$R[x,f]$-modules, where $H^{(n)}$ is an $n$-place extension of a
shift of a graded product of copies of
$$\widehat{R}[x,f]\otimes_{\widehat{R}}E_{\widehat{R}}(\widehat{R}/\fm\widehat{R})$$
(for each $n \in \nn$), for which there exists a homogeneous
$R[x,f]$-monomorphism
\[
\nu : R[x,f]\otimes_RM = \bigoplus_{i\in \nn}(Rx^i\otimes_RM) \lra
\prod_{n\in\nn}{\textstyle ^{^{^{\!\!\Large \prime}}}} H^{(n)} =: K.
\]
By Lemma \ref{fp.gatcti2.8} and Reminders \ref{fp.7}(i), the left
$R[x,f]$-module $K$ is $x$-torsion-free and $c$-testable, and
therefore so too is $R[x,f]\otimes_RM$.

The final claim now follows from the fact that $E_R(R/\fm)$ is an
$\fm$-torsion $R$-module.
\end{proof}

\begin{cor}\label{fp.9w} Suppose that $R$ is $F$-pure and excellent (but
not necessarily local). Let $c \in R^{\circ}$ be such that $R_c$ is
regular.

Let $\fp \in \Spec (R)$, and let $k(\fp)$ denote the simple
$R_{\fp}$-module. View the left $R_\fp[x,f]$-module
$$R_\fp[x,f]\otimes_{R_{\fp}}E_{R_{\fp}}(k(\fp))$$ as an
$R[x,f]$-module in the natural way. Then
$R_\fp[x,f]\otimes_{R_{\fp}}E_{R_{\fp}}(k(\fp))$ is $x$-torsion-free
and $c$-testable.
\end{cor}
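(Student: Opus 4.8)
The plan is to reduce the statement to the local case already settled in Corollary \ref{fp.9x}, applied to the local ring $R_{\fp}$ with the element $c/1 \in R_{\fp}$.

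First I would check that $R_{\fp}$ and $c/1$ satisfy the hypotheses of Corollary \ref{fp.9x}. By Lemma \ref{mr.HR}(i) the localization $R_{\fp}$ is $F$-pure, and it is excellent since excellence is preserved under localization. To see that $c/1 \in (R_{\fp})^{\circ}$, note that a minimal prime ideal of $R_{\fp}$ has the form $\fq R_{\fp}$, where $\fq$ is a prime of $R$ contained in $\fp$ that is minimal among such primes; any such $\fq$ is in fact a minimal prime of $R$ (a strictly smaller prime would again lie in $\fp$), so $c \notin \fq$ and hence $c/1 \notin \fq R_{\fp}$. Finally, $(R_{\fp})_{c/1}$ is a ring of fractions of $R_c$, hence regular. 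The final sentence of Corollary \ref{fp.9x}, with $R$ replaced by $R_{\fp}$, then shows that $H := R_{\fp}[x,f]\otimes_{R_{\fp}}E_{R_{\fp}}(k(\fp))$ is $x$-torsion-free and $(c/1)$-testable as a left $R_{\fp}[x,f]$-module.

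It then remains to transfer these two properties to the $R[x,f]$-module structure on $H$ induced by the ring homomorphism $\widetilde{\xi} : R[x,f] \lra R_{\fp}[x,f]$ coming from $\xi : R \lra R_{\fp}$ (as in Proposition \ref{aet.1}). Since $\widetilde{\xi}$ fixes $x$, the action of $x$ on $H$ is unchanged, so $\Gamma_x(H)$ is the same whether computed over $R[x,f]$ or over $R_{\fp}[x,f]$; thus $H$ is $x$-torsion-free over $R[x,f]$. For $c$-testability, suppose $h \in H$ and $d \in R^{\circ}$ satisfy $dx^nh = 0$ for all $n \gg 0$. By the same argument used above for $c$, we have $d/1 \in (R_{\fp})^{\circ}$, and $(d/1)x^nh = dx^nh = 0$ for all $n \gg 0$; since $H$ is $(c/1)$-testable over $R_{\fp}[x,f]$ we conclude that $(c/1)x^nh = 0$, that is $cx^nh = 0$, for all $n \geq 0$. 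Hence $H$ is $c$-testable over $R[x,f]$, as required.

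The whole argument is essentially localization bookkeeping, so there is no serious obstacle; the single point needing a little care is the stability of $R^{\circ}$ under localization (used both for $c$ and for the auxiliary $d$), which is exactly where the hypothesis that $c$ lies in $R^{\circ}$, and not merely in $R$, is used. Everything else is immediate from Definition \ref{pl.3} and Corollary \ref{fp.9x}.
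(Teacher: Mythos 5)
Your proof is correct and follows essentially the same route as the paper: localize, invoke Lemma \ref{mr.HR} and Corollary \ref{fp.9x} for $R_{\fp}$ and $c/1$, then pull the conclusion back along $R[x,f]\to R_{\fp}[x,f]$. You spell out a little more than the paper does the verification that $d\in R^{\circ}$ implies $d/1\in(R_{\fp})^{\circ}$ (needed in the definition of $c$-testability), but the argument is identical in substance.
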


\begin{proof} By Lemma \ref{mr.HR}, the localization $R_{\fp}$ is again $F$-pure; of
course, $R_{\fp}$ is excellent. Furthermore, since
$\left(R_{\fp}\right)_{c/1}$, the ring of fractions of $R_{\fp}$
with respect to the set of powers of the element $c/1$ of
$(R_{\fp})^{\circ}$, is a ring of fractions of $R_c$, it is regular.

We can now appeal to Corollary \ref{fp.9x} to deduce that the left
$R_\fp[x,f]$-module $R_\fp[x,f]\otimes_{R_{\fp}}E_{R_{\fp}}(k(\fp))$
is $x$-torsion-free and $(c/1)$-testable. Therefore, when we regard
$R_\fp[x,f]\otimes_{R_{\fp}}E_{R_{\fp}}(k(\fp))$ as an
$R[x,f]$-module in the natural way, it is $x$-torsion-free and
$c$-testable.
\end{proof}

\begin{lem}
\label{fp.31} For each $\fp \in \Spec(R)$, let $k(\fp)$ denote the
simple $R_{\fp}$-module and let
$$
H(\fp) := R_\fp[x,f]\otimes_{R_{\fp}}E_{R_{\fp}}(k(\fp)),
$$
viewed as a graded left $R[x,f]$-module in the natural way. Observe
that the $0$th component of $H(\fp)$ is $R$-isomorphic to
$E_R(R/\fp)$.

Let $M$ be an $R$-module, and express its injective envelope
$E_R(M)$ as a direct sum of indecomposable injective $R$-modules,
say $E_R(M) \cong \bigoplus_{j\in J}E_R(R/\fp^{(j)})$, where
$\left(\fp^{(j)}\right)_{j\in J}$ is an appropriate family of prime
ideals of $R$. (The superscript$\phantom{\fp}^{(j)}$ here is an
index and does not indicate symbolic power.) Then there exists a
homogeneous $R[x,f]$-homomorphism
\[
\lambda := \bigoplus_{n\in \nn}\lambda_n : R[x,f]\otimes_RM =
\bigoplus_{n\in \nn}(Rx^n\otimes_RM) \lra \prod_{j\in J}{\textstyle
^{^{^{\Large \prime}}}}H(\fp^{(j)}),
\]
such that $\lambda_0$ is a monomorphism.
\end{lem}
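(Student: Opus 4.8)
The plan is to follow the pattern of the proof of Lemma \ref{fp.gatcit3.1}, which is the present statement specialised to an $\fm$-torsion module over a local ring $(R,\fm)$. First I would invoke E. Matlis's structure theory of injective modules over a commutative Noetherian ring (see \cite[Chapter 18]{HM}) to obtain the decomposition $E_R(M)\cong\bigoplus_{j\in J}E_R(R/\fp^{(j)})$ of the injective envelope into indecomposable injectives; this is exactly what allows the family $(\fp^{(j)})_{j\in J}$ to be chosen. Next I would use the observation already recorded in the statement: for each $j\in J$ the $0$th component of $H(\fp^{(j)})$ equals $R_{\fp^{(j)}}x^0\otimes_{R_{\fp^{(j)}}}E_{R_{\fp^{(j)}}}(k(\fp^{(j)})) = E_{R_{\fp^{(j)}}}(k(\fp^{(j)}))$, which is $R$-isomorphic to $E_R(R/\fp^{(j)})$.

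Write $L := \prod_{j\in J}{\textstyle ^{^{^{\Large \prime}}}}H(\fp^{(j)})$ for the graded product in the sense of Reminders \ref{fp.7}(i); its $n$th component is $L_n = \prod_{j\in J}H(\fp^{(j)})_n$. Composing the inclusion $M\lra E_R(M)$, the decomposition above, and the canonical embedding of the direct sum $\bigoplus_{j\in J}H(\fp^{(j)})_0$ into the full product $\prod_{j\in J}H(\fp^{(j)})_0 = L_0$, we obtain an $R$-monomorphism $\lambda_0 : M\lra L_0$. It then remains to propagate $\lambda_0$ to positive degrees, just as in the last paragraph of the proof of Lemma \ref{fp.gatcit3.1}: for each $n\in\nn$ one defines $\lambda_n : Rx^n\otimes_RM\lra L_n$ by $\lambda_n(rx^n\otimes m) = rx^n\lambda_0(m)$, where $rx^n$ is read as an element of $R[x,f]$ acting on $\lambda_0(m)\in L_0$ (so that $rx^n\lambda_0(m)\in L_n$, by the definition of the $x$-action on a graded product). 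This $\lambda_n$ is well defined because the right $R$-module structure on $Rx^n$ is twisted by $f^n$: if $rs^{p^n}x^n\otimes m = rx^n\otimes sm$ in $Rx^n\otimes_RM$, then, by $R$-linearity of $\lambda_0$ and the relation $x^ns = s^{p^n}x^n$, one has $rx^n\lambda_0(sm) = rx^n s\,\lambda_0(m) = rs^{p^n}x^n\lambda_0(m)$ in $L_n$. A routine check using $xr = r^px$ then shows that $\lambda := \bigoplus_{n\in\nn}\lambda_n$ commutes with left multiplication by $x$, hence is a homogeneous $R[x,f]$-homomorphism, and $\lambda_0$ is a monomorphism by construction.

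There is no real obstacle here: the substance is the Matlis decomposition of $E_R(M)$ together with the elementary fact (essentially the adjunction between $R[x,f]\otimes_R(-)$ and restriction of scalars) that any $R$-homomorphism from $M$ into the degree-$0$ part of a graded left $R[x,f]$-module extends canonically and homogeneously to $R[x,f]\otimes_RM$. The one point to be attentive to is that the target must be the graded \emph{product} $\prod_{j\in J}{\textstyle ^{^{^{\Large \prime}}}}H(\fp^{(j)})$ rather than a graded direct sum, so that its $0$th component genuinely receives all of $E_R(M)$ when the index set $J$ is infinite.
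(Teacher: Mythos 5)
Your proposal is correct and follows essentially the same route as the paper's proof: Matlis decomposition of $E_R(M)$, the observation that $H(\fp^{(j)})_0 \cong E_R(R/\fp^{(j)})$, embedding the direct sum of degree-$0$ components into the full product to get $\lambda_0$, and then propagating by $\lambda_n(rx^n\otimes m) = rx^n\lambda_0(m)$. The paper compresses the well-definedness and $R[x,f]$-homomorphism checks into the phrase ``it is straightforward to check,'' whereas you spell them out; your closing remark about needing the graded product rather than a graded direct sum when $J$ is infinite is a correct and useful observation that the paper leaves implicit.
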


\begin{proof} Set
$$
L = \bigoplus_{n\in\nn}L_n := \prod_{j\in J}{\textstyle ^{^{^{\Large
\prime}}}}H(\fp^{(j)}).
$$
Since $\bigoplus_{j\in J}H(\fp^{(j)})_0 \cong \bigoplus_{j\in
J}E_R(R/\fp^{(j)})$ and $\bigoplus_{j\in J}H(\fp^{(j)})_0$ can be
embedded in $\prod_{j\in J}H(\fp^{(j)})_0$, there is an
$R$-monomorphism $\lambda_0 : M \lra L_0$. We can define, for each
$n \in \nn$, an $R$-homomorphism $\lambda_n: Rx^n \otimes_RM \lra
L_n$ such that $\lambda_n(rx^n \otimes m) = rx^n\lambda_0(m)$ for
all $r \in R$ and all $m \in M$. It is straightforward to check that
the $\lambda_n~(n\in\nn)$ provide a homogeneous
$R[x,f]$-homomorphism as claimed.
\end{proof}

We are now in a position to prove the main result of this paper.

\begin{thm}
\label{fp.32} For each $\fp \in \Spec(R)$, let $k(\fp)$ denote the
simple $R_{\fp}$-module and let
$$
H(\fp) := R_\fp[x,f]\otimes_{R_{\fp}}E_{R_{\fp}}(k(\fp)),
$$
viewed as a graded left $R[x,f]$-module in the natural way.

Let $M$ be an $R$-module. Then there is a family
$\left(G^{(n)}\right)_{n \in \nn}$ of\/ $\nn$-graded left
$R[x,f]$-modules, where $G^{(n)}$ is an $n$-place extension of a
shift of a graded product of graded left $R[x,f]$-modules of the
form $H(\fp)$ for various prime ideals $\fp$ of $R$ (for each $n \in
\nn$), for which there exists a homogeneous $R[x,f]$-monomorphism
\[
\nu : R[x,f]\otimes_RM = \bigoplus_{i\in \nn}(Rx^i\otimes_RM) \lra
\prod_{n\in\nn}{\textstyle ^{^{^{\!\!\Large \prime}}}} G^{(n)}.
\]

If $R$ is excellent and $F$-pure, and $c \in R^{\circ}$ is such that
$R_c$ is regular, then $R[x,f]\otimes_RM$ is $c$-testable; as this
is true for each $R$-module $M$, it follows that $c$ is a big test
element for $R$.

Thus if $R$ is excellent and $F$-pure, then $R$ has a big test
element.
\end{thm}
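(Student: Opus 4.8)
The plan is to mimic, for an arbitrary $R$-module $M$, the embedding construction of Theorem~\ref{fp.gatcti3.5}, but using Lemma~\ref{fp.31} in the r\^ole previously played by Lemma~\ref{fp.gatcit3.1}. For each $n \in \nn$ I would regard $Rx^n\otimes_R M$ as an $R$-module and apply Lemma~\ref{fp.31} to obtain a homogeneous $R[x,f]$-homomorphism from $R[x,f]\otimes_R(Rx^n\otimes_R M)$ into a graded product $\prod^{\prime}_{j}H(\fp^{(j,n)})$ that is monomorphic in degree $0$, where the $\fp^{(j,n)}$ are the primes occurring in the injective envelope of $Rx^n\otimes_R M$. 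Using isomorphisms of the type described in \cite[Remark 3.2]{gatcti} together with the shift functor $(\: {\scriptscriptstyle \bullet} \:)(-n)$, this yields a homogeneous $R[x,f]$-homomorphism $\zeta^{(n)} : \bigoplus_{j\geq n}(Rx^j\otimes_R M) \lra \big(\prod^{\prime}_{j}H(\fp^{(j,n)})\big)(-n)$ which is monomorphic in degree $n$; Proposition~\ref{fp.gatcti2.11} then extends $\zeta^{(n)}$ by $n$ places to a homogeneous $R[x,f]$-homomorphism $\lambda^{(n)} : R[x,f]\otimes_R M \lra G^{(n)}$, monomorphic in degree $n$, where $G^{(n)}$ is an $n$-place extension of a shift of a graded product of modules of the form $H(\fp)$. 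Assembling these into $\nu = \bigoplus_{j\in\nn}\nu_j$ with $\nu_j(\xi_j) = \big((\lambda^{(n)})_j(\xi_j)\big)_{n\in\nn}$ gives the required monomorphism, since $(\lambda^{(j)})_j$ is monomorphic for each $j \in \nn$. This half of the theorem requires no hypothesis beyond the standing ones.

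For the application, suppose now that $R$ is excellent and $F$-pure and that $c\in R^{\circ}$ is such that $R_c$ is regular. By Corollary~\ref{fp.9w}, each $H(\fp) = R_{\fp}[x,f]\otimes_{R_{\fp}}E_{R_{\fp}}(k(\fp))$, viewed as an $R[x,f]$-module, is $x$-torsion-free and $c$-testable. A graded product of $c$-testable modules is $c$-testable by Reminders~\ref{fp.7}(i); forming $n$-place extensions preserves $c$-testability by Definition~\ref{fp.gatcti2.10} (which rests on Lemma~\ref{fp.gatcti2.8}); and shifting trivially preserves it. Hence each $G^{(n)}$, and therefore $\prod^{\prime}_{n\in\nn}G^{(n)}$, is $c$-testable; since $c$-testability passes to $R[x,f]$-submodules (immediate from the definition) and is invariant under $R[x,f]$-isomorphism, the monomorphism $\nu$ shows that $R[x,f]\otimes_R M$ is $c$-testable. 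Unwinding the definitions: given $m\in 0^{*}_M$, there is $d\in R^{\circ}$ with $dx^{j}(1\otimes m)=0$ for all $j\gg 0$, so $c$-testability forces $cx^{j}(1\otimes m)=0$ for all $j\in\nn$. As $M$ was arbitrary, $c$ is a big test element for $R$; and such a $c$ exists by Remark~\ref{fp.21}(i) (recall that $F$-pure rings are reduced), so $R$ has a big test element.

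I expect the main obstacle to be organizational rather than conceptual: one must check at each stage that the data fed into Proposition~\ref{fp.gatcti2.11} have the required shape (a $\Z$-graded left $R[x,f]$-module concentrated in degrees $\geq b$ together with an $R[x,f]$-homomorphism out of $\bigoplus_{n\geq b}(Rx^n\otimes_R M)$, with $b = n$ at the $n$th stage), and to keep careful track of the degree in which each map is monomorphic. The genuinely substantive ingredient --- that every building block $H(\fp)$ is $x$-torsion-free and $c$-testable --- has already been secured in Corollary~\ref{fp.9w}, which in turn depends on Proposition~\ref{fp.22} and the reduction to the complete local case carried out in \S\ref{fp}; granting that, the theorem follows simply by transporting $c$-testability along the embedding $\nu$.
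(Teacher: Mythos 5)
Your argument follows the paper's own proof essentially step for step: Lemma~\ref{fp.31} applied to each $Rx^n\otimes_R M$, the Remark~3.2 isomorphisms and shift to get $\zeta^{(n)}$ monomorphic in degree $n$, extension by $n$ places via Proposition~\ref{fp.gatcti2.11}, assembly into the graded-product monomorphism $\nu$, and then transport of $c$-testability of the building blocks $H(\fp)$ (Corollary~\ref{fp.9w}) through graded products, extensions and shifts down to $R[x,f]\otimes_R M$. The proposal is correct and takes the same route as the paper.
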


\begin{proof} Let $n \in \nn$. By Lemma \ref{fp.31}, there is a
family $\left(\fp^{(j,n)}\right)_{j \in Y_n}$ of prime ideals of
$R$, and a homogeneous $R[x,f]$-homomorphism
$$R[x,f]\otimes_R(Rx^n\otimes_R M) \lra \prod_{j\in
Y_n}{\textstyle ^{^{^{\Large \prime}}}}H(\fp^{(j,n)})$$ which is
monomorphic in degree $0$. If we now use isomorphisms of the type
described in \cite[Remark 3.2]{gatcti}, we obtain (after application
of the shift functor $ (\: {\scriptscriptstyle \bullet} \:)(-n)$) a
homogeneous $R[x,f]$-homomorphism
$$
\zeta^{(n)} : \bigoplus_{j \geq n}(Rx^j\otimes_RM) \lra
\left(\prod_{j\in Y_n}{\textstyle ^{^{^{\Large
\prime}}}}H(\fp^{(j,n)})\right)(-n)
$$
which is monomorphic in degree $n$. We can now use Proposition
\ref{fp.gatcti2.11} to extend $\zeta^{(n)}$ by $n$ places to produce
a homogeneous $R[x,f]$-homomorphism
$$
\lambda^{(n)} : \bigoplus_{j \geq 0}(Rx^j\otimes_RM) =
R[x,f]\otimes_RM \lra G^{(n)},
$$
where $G^{(n)}$ is an appropriate $n$-place extension of
$\left(\prod_{j\in Y_n}^{\prime} H(\fp^{(j,n)})\right)(-n)$, such
that $\lambda^{(n)}$ is monomorphic in degree $n$.

There is therefore a homogeneous $R[x,f]$-homomorphism
\[
\nu  = \bigoplus_{j\in \nn}\nu_j : R[x,f]\otimes_RM = \bigoplus_{j
\in \nn}(Rx^j\otimes_RM) \lra \prod_{ n\in\nn}{\textstyle
^{^{^{\!\!\Large \prime}}}} G^{(n)} =: K
\]
such that $\nu_j(\xi_j) =
\big((\lambda^{(n)})_j(\xi_j)\big)_{n\in\nn}$ for all $j \in \nn$
and $\xi_j \in Rx^j \otimes_RM$. For each $j \in \nn$, we have that
$(\lambda^{(j)})_j$ is a monomorphism; hence $\nu_j$ is a
monomorphism. Hence $\nu$ is an $R[x,f]$-monomorphism.

Now suppose that $R$ is excellent and $F$-pure, and that $c \in
R^{\circ}$ is such that $R_c$ is regular. By Corollary \ref{fp.9w},
the left $R[x,f]$-module $H(\fp)$ is $x$-torsion-free and
$c$-testable.  It now follows from Lemma \ref{fp.gatcti2.8} and
Reminders \ref{fp.7} that $\prod^{\prime}_{j\in Y_n}H(\fp^{(j,n)})$
is $x$-torsion-free and $c$-testable for all $n \in \nn$, that
$G^{(n)}$ is $x$-torsion-free and $c$-testable for all $n \in \nn$,
and that $K$ is $x$-torsion-free and $c$-testable.

Therefore $R[x,f]\otimes_RM$ is ($x$-torsion-free and) $c$-testable
for each $R$-module $M$, and so $c$ is a big test element for $R$.
\end{proof}

Experts in tight closure theory will know that it is desirable to
have `locally stable' or `completely stable' test elements.  These
are defined as follows.

\begin{defs}
\label{fp.33} A test element for modules (respectively, for ideals)
$c$ for $R$ is said to be a {\em locally stable test element for
modules (respectively, for ideals) for $R$\/} if and only if, for
every $\fp \in \Spec (R)$, the natural image $c/1$ of $c$ in
$R_{\fp}$ is a test element for modules (respectively, for ideals)
for $R_{\fp}$.

A test element for modules (respectively, for ideals) $c$ for $R$ is
said to be a {\em completely stable test element for modules
(respectively, for ideals) for $R$\/} if and only if, for every $\fp
\in \Spec (R)$, the natural image $c/1$ of $c$ in
$\widehat{R_{\fp}}$ is a test element for modules (respectively, for
ideals) for $\widehat{R_{\fp}}$.
\end{defs}

We make corresponding definitions for big test elements.

\begin{defs}
\label{fp.34} A big test element $c$ for $R$ is said to be a {\em
locally stable (respectively, completely stable) big test element
for $R$} if and only if, for every $\fp \in \Spec (R)$, the natural
image $c/1$ of $c$ in $R_{\fp}$ (respectively, $\widehat{R_{\fp}}$)
is a big test element for $R_{\fp}$ (respectively, for
$\widehat{R_{\fp}}$).
\end{defs}

We can now use Theorem \ref{fp.32} to obtain big test elements that
are locally stable and completely stable.

\begin{cor}
\label{fp.35} Let $R$ be excellent and $F$-pure, and let $c \in
R^{\circ}$ be such that $R_c$ is regular. Then $c$ is a locally
stable big test element for $R$ and a completely stable big test
element for $R$.
\end{cor}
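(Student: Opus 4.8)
The plan is to note that Theorem \ref{fp.32} already establishes that $c$ is a big test element for $R$ itself, so that the only thing left to check is the behaviour of $c$ under localization and completion; and that behaviour follows by simply applying Theorem \ref{fp.32} again, this time to the rings $R_{\fp}$ and $\widehat{R_{\fp}}$. The verifications required are exactly the ascent/descent steps already carried out in the proofs of Theorem \ref{fp.24} and Corollary \ref{fp.9x}, so I would model the argument on those.

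First I would handle local stability. Fix $\fp \in \Spec(R)$. By Lemma \ref{mr.HR}(i) the localization $R_{\fp}$ is $F$-pure, and it is of course excellent. Since $\left(R_{\fp}\right)_{c/1}$ is a ring of fractions of $R_c$, it is regular. Finally $c/1 \in (R_{\fp})^{\circ}$: being $F$-pure, $R_{\fp}$ is reduced, its minimal prime ideals are the expansions of those minimal prime ideals of $R$ that are contained in $\fp$, and $c \in R^{\circ}$ avoids all of these. So Theorem \ref{fp.32}, applied to $R_{\fp}$ and $c/1$, shows that $c/1$ is a big test element for $R_{\fp}$; as $\fp$ was arbitrary, $c$ is a locally stable big test element for $R$.

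For complete stability, fix $\fp \in \Spec(R)$ again. By Lemma \ref{mr.HR}(ii) the ring $\widehat{R_{\fp}}$ is $F$-pure, and complete local rings are excellent. The fibre rings of the flat ring homomorphism $\left(R_{\fp}\right)_{c} \lra \left(\widehat{R_{\fp}}\right)_{c}$ induced by inclusion are rings of fractions of the formal fibres of $R_{\fp}$, hence regular, so $\left(\widehat{R_{\fp}}\right)_{c}$ is regular. And $c/1 \in \left(\widehat{R_{\fp}}\right)^{\circ}$, since $\widehat{R_{\fp}}$ is reduced and, by flatness, each minimal prime ideal of $\widehat{R_{\fp}}$ contracts to a minimal prime ideal of $R_{\fp}$, which $c/1$ avoids by the previous paragraph. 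A final application of Theorem \ref{fp.32}, to $\widehat{R_{\fp}}$ and $c/1$, gives that $c/1$ is a big test element for $\widehat{R_{\fp}}$; hence $c$ is a completely stable big test element for $R$.

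There is no real obstacle here: the only points requiring a little care are the two checks that $c/1$ remains in the appropriate ``$R^{\circ}$'' after localizing and after completing, and the transfer of regularity of $R_c$ first through localization and then through the (regular) formal fibres --- and these are precisely the routine verifications already made inside the proofs of Theorem \ref{fp.24} and Corollary \ref{fp.9x}. Alternatively, one could simply cite the method of Corollary \ref{fp.9x} to see that $\widehat{R_{\fp}}$ meets all the hypotheses needed to invoke Theorem \ref{fp.32}.
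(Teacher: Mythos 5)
Your proposal is correct and follows essentially the same route as the paper's proof: cite Theorem \ref{fp.32} for $R$ itself, then verify that $R_{\fp}$ and $\widehat{R_{\fp}}$ satisfy its hypotheses (excellent, $F$-pure, $c/1$ in the relevant $R^{\circ}$, and $c/1$ inverting to a regular ring, the last two transfers handled via the flatness of localization and the regularity of the formal fibres) so that it applies again. The only substantive difference is that you spell out why $c/1$ lies in $(R_{\fp})^{\circ}$ and $(\widehat{R_{\fp}})^{\circ}$, which the paper dismisses with an ``of course''; your elaboration is correct and does no harm.
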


\begin{proof} We proved in Theorem \ref{fp.32} that $c$ is a big
test element for $R$. Let $\fp \in \Spec (R)$. Of course, $c/1 \in
R_{\fp}$ belongs to both $\big(R_{\fp}\big)^{\circ}$ and
$\big(\widehat{R_{\fp}}\big)^{\circ}$.

By Lemma \ref{mr.HR}, the localization $R_{\fp}$ and its completion
$\widehat{R_{\fp}}$ are again $F$-pure; of course, $R_{\fp}$ and
$\widehat{R_{\fp}}$ are excellent. Furthermore, since
$\big(R_{\fp}\big)_{c/1}$, the ring of fractions of $R_{\fp}$ with
respect to the set of powers of the element $c/1$ of
$(R_{\fp})^{\circ}$, is a ring of fractions of $R_c$, it is regular.
In addition, the fibre rings of the flat ring homomorphism
$\big(R_{\fp}\big)_{c/1} \lra \big(\widehat{R_{\fp}}\big)_{c/1}$
induced by inclusion are rings of fractions of the formal fibres of
$R_{\fp}$, and so are regular; it therefore follows that
$\big(\widehat{R_{\fp}}\big)_{c/1}$ is regular.

We can therefore use Theorem \ref{fp.32} to deduce that $c/1$ is a
big test element for both $R_{\fp}$ and $\widehat{R_{\fp}}$.
\end{proof}


\begin{thebibliography}{00}

\bibitem{Blickle}M. Blickle, {\it The intersection homology
 $D$-module in finite characteristic\/}, PhD dissertation, University
 of Michigan, Ann Arbor, 2001.

\bibitem{LC}
 M. P. Brodmann and R. Y. Sharp, {\it Local cohomology: an algebraic
 introduction with geometric applications\/},
 Cambridge Studies in
 Advanced Mathematics {\bf 60}, Cambridge University Press, 1998.

\bibitem{Fedde83}R. Fedder, {\it $F$-purity and rational singularity\/},
 Transactions Amer.\ Math.\ Soc.\ {\bf 278} (1983)
 461--480.

\bibitem{HarSpe77}R. Hartshorne and R. Speiser, {\it Local cohomological
 dimension in characteristic $p$\/}, Annals of Math. {\bf 105} (1977) 45--79.

\bibitem{HocHun90}M. Hochster and C. Huneke, {\it Tight closure,
 invariant theory and the Brian\c{c}on-Skoda Theorem\/}, J. Amer.\ Math.\
 Soc.\ {\bf 3} (1990) 31--116.

\bibitem{HocHun94}M. Hochster and C. Huneke,
 {\it $F$-regularity, test elements, and smooth base change\/},
 Transactions Amer.\ Math.\ Soc.\ {\bf 346} (1994)
 1--62.

\bibitem{HocRob74}M. Hochster and J. L. Roberts, {\it Rings of invariants of
 reductive groups acting on regular rings are Cohen--Macaulay\/},
 Advances in Math. {\bf 13} (1974) 115--175.

\bibitem{Hunek96}C. Huneke, \textit{Tight closure and its applications\/},
 Conference Board of the Mathematical Sciences Regional Conference Series
 in Mathematics \textbf{88}, American Mathematical Society, Providence, 1996.

\bibitem{58}C. Huneke and R. Y. Sharp, {\it Bass numbers of local
 cohomology modules\/}, Transactions Amer.\ Math.\ Soc.\ {\bf 339} (1993)
 765--779.

\bibitem{MK}M. Katzman, {\it Parameter-test-ideals of Cohen--Macaulay
 rings\/}, Compositio Math.\ {\bf 144} (2008) 933--948.

\bibitem{KS}M. Katzman and R. Y. Sharp, {\it Uniform behaviour of the
 Frobenius closures of ideals generated by regular sequences\/},
 J. Algebra {\bf 295} (2006) 231--246.

\bibitem{Lyube97}G. Lyubeznik, {\it $F$-modules: applications to local
 cohomology and $D$-modules in characteristic $p > 0$\/}, J. reine angew.\
 Math.\ {\bf 491} (1997) 65--130.

\bibitem{LyuSmi01}G. Lyubeznik and K. E. Smith, {\it On the commutation of
 the test ideal with localization and completion\/}, Transactions Amer.\ Math.\
 Soc.\ {\bf 353} (2001) 3149--3180.

\bibitem{HM}H. Matsumura, {\it Commutative ring theory\/}, Cambridge Studies in
 Advanced Mathematics {\bf 8}, Cambridge University Press, 1986.

\bibitem{ga}
 R. Y. Sharp, {\it Graded annihilators of modules over the
 Frobenius skew polynomial ring, and tight closure\/},
 Transactions Amer.\ Math.\ Soc.\ {\bf 359} (2007) 4237--4258.

\bibitem{gatcti}
 R. Y. Sharp, {\it Graded annihilators and tight closure test ideals\/},
 J. Algebra {\bf 322} (2009) 3410--3426.

\bibitem{SN}R. Y. Sharp and N. Nossem, {\it Ideals in a perfect closure,
 linear growth of primary decompositions, and tight closure\/},
 Transactions Amer.\ Math.\ Soc.\ {\bf 356} (2004) 3687--3720.

\bibitem{SV}
 D. W. Sharpe and P. V\'amos, {\it Injective modules\/},
 Cambridge Tracts in Mathematics and Mathematical Physics {\bf 62},
 Cambridge University Press, 1972.

\bibitem{Smi97}K. E. Smith, {\it $F$-rational rings have rational
 singularities\/}, Amer.\ J. Math.\ {\bf 119} (1997) 159--180.

\end{thebibliography}
\end{document}